\documentclass[12pt]{amsart}
\usepackage{a4wide,enumerate,xcolor}
\usepackage{amsmath,graphicx,comment,enumerate}
\usepackage{mathtools} 
\allowdisplaybreaks

\usepackage{enumitem}
\setlist[itemize]{label={$\bullet$}, leftmargin=12pt, itemsep=3pt}

\let\pa\partial
\let\na\nabla
\let\eps\varepsilon
\newcommand{\N}{{\mathbb N}}
\newcommand{\R}{{\mathbb R}}
\newcommand{\diver}{\operatorname{div}}

\newtheorem{theorem}{Theorem}
\newtheorem{lemma}[theorem]{Lemma}
\newtheorem{proposition}[theorem]{Proposition}
\newtheorem{remark}[theorem]{Remark}

\begin{document}

\title[A degenerate Keller--Segel system with volume filling]{Well-posedness and asymptotic limits for a degenerate Keller--Segel system with volume filling}

\author[N. Geltner]{Noah Geltner}
\address{Institute of Analysis and Scientific Computing, TU Wien, Wiedner Hauptstra\ss e 8--10, 1040 Wien, Austria}
\email{noah.geltner@tuwien.ac.at} 

\author[A. J\"ungel]{Ansgar J\"ungel}
\address{Institute of Analysis and Scientific Computing, TU Wien, Wiedner Hauptstra\ss e 8--10, 1040 Wien, Austria}
\email{juengel@tuwien.ac.at} 

\author[M. Zhang]{Mingyue Zhang}
\address{Institute of Analysis and Scientific Computing, TU Wien, Wiedner Hauptstra\ss e 8--10, 1040 Wien, Austria}
\email{mingyue.zhang@tuwien.ac.at} 

\date{\today}

\thanks{The authors acknowledge partial support from the Austrian Science Fund (FWF), grant 10.55776/PAT2687825, and from the Austrian Federal Ministry for Women, Science and Research and implemented by \"OAD, project MultHeFlo. This work has received funding from the European Research Council (ERC) under the European Union's Horizon 2020 research and innovation programme, ERC Advanced Grant NEUROMORPH, no.~101018153. For open-access purposes, the authors have applied a CC BY public copyright license to any author-accepted manuscript version arising from this submission.} 

\begin{abstract}
A class of parabolic--parabolic Keller--Segel systems with degenerate diffusion and volume filling is studied in a bounded domain subject to no-flux boundary conditions. The equations are derived from a multiphase fluid model. The interplay between nonlinear diffusion and density saturation leads to a rich variety of behaviors across different parameter regimes. We establish the existence of global weak solutions, a weak--strong uniqueness result, the exponential convergence to the homogeneous steady state, pattern formation in one spatial dimension, as well as the parabolic–elliptic and vanishing diffusion limits. The analysis relies on a priori estimates derived from suitable entropy functionals. Pattern formation is demonstrated by reducing the system to a first-order equation and conducting a detailed analysis of the resulting nonlinearity. Numerical simulations from a one-dimensional finite-volume scheme illustrate the asymptotic regimes.
\end{abstract}

\keywords{Keller--Segel equations, existence of weak solutions, weak--strong uniqueness, steady states, pattern formation, parabolic--elliptic limit, vanishing diffusion limit.}  
 
\subjclass[2000]{35K51, 35K65, 35B36, 35Q92, 92C17.}

\maketitle


\section{Introduction}

The Keller--Segel equations provide a fundamental mathematical framework for describing how populations of cells sense and collectively migrate along chemical signals \cite{KeSe70}. While the classical Keller--Segel model assumes linear diffusion and constant sensitivity, generalized models account for finite-speed propagation or the prevention of overcrowding. Mathematically, this can be modeled by degenerate diffusion \cite{BDD06,PeZh24} or a modification of the chemotactic sensitivity which diminishes as the cell density approaches a maximal packing threshold \cite{CCWWZ20,PaHi02}. The resulting system, derived from a multiphase fluid model and studied here for the first time, combines degenerate diffusion with nonlinear, density-limited chemotactic drift, yielding a rich mathematical structure: It prevents unphysical blowup present in classical Keller–Segel models, while still capturing pattern formation, aggregation, and saturation phenomena observed in real biological systems. Moreover, depending on the nonlinear diffusion, we distinguish parameter regimes in which pattern formation either emerges or is inhibited.

The Keller--Segel equations for the cell density $\rho(t,x)$ and the concentration of the chemoattractant $c(t,x)$ are given by
\begin{align}
  \pa_t\rho &= \diver\big((1-\rho)\rho^{m-1}\na\rho 
  - \chi\rho(1-\rho)\na c\big), \label{1.rho} \\
  \tau\pa_t c &= \eta\Delta c - c + \rho\quad\mbox{in }\Omega,\ t>0,
  \label{1.c}
\end{align}
supplemented with the initial and no-flux boundary conditions
\begin{equation}\label{1.bic}
\begin{aligned}
  & \rho(0)=\rho^0, \quad c(0)=c^0, \quad\mbox{in }\Omega, \\
  & (1-\rho)\rho^{m-1}\na\rho\cdot\nu 
  = \na c\cdot\nu = 0\quad\mbox{on }\pa\Omega,\ t>0,
\end{aligned}
\end{equation}
where $0\le \rho^0,c^0\le 1$ in $\Omega$. The biological parameters are the degeneracy exponent $m>1$, the chemotactic sensitivity $\chi$, the characteristic time $\tau$, and the chemical diffusion rate $\eta>0$. 

Both diffusion and drift in \eqref{1.rho} vanish as the cell density approaches a maximal packing threshold. The exponent $m>1$, which expresses slow diffusion, plays a crucial role in pattern formation: For instance, steady states (more precisely: equilibrium solutions) are positive for $m=1$, whereas for $m>1$, they may vanish on subsets of the domain. 

Our model can be derived formally from mass and momentum balance equations in the multiphase approach by assuming that the total interaction force is given by interphase, chemotactic, and drag forces; see Appendix \ref{app}. Equations \eqref{1.rho}--\eqref{1.c} can also be interpreted as the formal gradient flow
\begin{equation*}
\begin{aligned}
  & \pa_t\rho = \diver\bigg(M(\rho)\na\frac{\delta E}{\delta\rho}\bigg),
  \quad \chi\tau\pa_t c = -\frac{\delta E}{\delta c}, \\
  & \mbox{where}\quad E(\rho,c) = \int_\Omega
  \bigg(\frac{\rho^m}{m(m-1)} + \frac{\chi}{2}(\eta|\na c|^2 
  + c^2) - \chi\rho c\bigg)dx
\end{aligned}
\end{equation*}
is the energy and the mobility reads as $M(\rho)=\rho(1-\rho)$, used in \cite{BDD06}. Such a relation also holds in the case $m=1$, with $\rho^m/(m(m-1))$ replaced by $\rho(\log\rho-1)$.

The mathematical modeling of chemotaxis phenomena dates back to the pioneering works of Patlak in the 1950s \cite{Pat53} and Keller and Segel in the 1970s \cite{KeSe70}. Since then, the body of mathematical literature on the Keller--Segel equations has grown extensively. We highlight only some contributions concerning degenerate and volume-filling models. For notational convenience, we write the general Keller--Segel equations as
\begin{align}\label{1.general}
  \pa_t\rho = \diver\big(a(\rho)\na\rho - b(\rho)\na c\big),
  \quad \tau\pa_t c = \eta\Delta c - c + \rho,
\end{align}
where $a$ and $b$ are nonlinear functions. 

Degenerate Keller--Segel equations with $a(\rho)=\rho^{m-1}$ ($m>1$) and $b(\rho)=\chi\rho$ exhibit a finite speed of propagation of the cell movement, which seems to be a realistic cell behavior. The existence of global weak solutions for $m>3-4/d$ was proved in \cite{KoSz08} and later improved to the optimal range $m>2-2/d$ \cite{TaWi12}. The convergence to possibly nonconstant steady states  for $m\ge 2$ and to the constant steady state for $m=2$ was proved in \cite{Jia18}. A complete phase transition diagram for the one-dimensional model with $m=2$ was given in \cite{CCWWZ20}. 

Hillen and Painter \cite{PaHi02} derived a volume-filling Keller--Segel system from a lattice model with the functions $a(\rho)=q(\rho)-\rho q'(\rho)$ and $b(\rho)=\rho q(\rho)$, where $q$ is nonnegative and satisfies $q(1)=0$. The simplest choice $q(\rho)=1-\rho$ leads to $a(\rho)=1$ (analyzed in \cite{HiPa01}); thus, this class of models is different from our system. Conditions on $a(\rho)$ and $b(\rho)$ such that the solutions do not blow up were given in \cite{BDD06,CaCa06} for the elliptic case $\tau=0$ and in \cite{DiRo08} for the parabolic case $\tau>0$. In \cite{WaHi07}, pattern formation of the parabolic model $\tau>0$ was investigated. For a comprehensive review of volume-filling chemotaxis models, we refer the reader to \cite{Wrz10}.

Also combined degenerate and volume-filling Keller--Segel equations have been studied in the literature. The model of Hillen and Painter with the choice $q(\rho) = (1-\rho)^r$ with $r>1$ was investigated in \cite{WWW12}, and general functions $q$ have been considered in \cite{LaWr05}. Bendahmane and co-workers \cite{BKU07} proved the existence of H\"older continuous solutions to \eqref{1.general} with $a(0)=a(1)=0$ and $b(1)=0$. Although these properties are satisfied in our model, the other assumptions imposed in \cite{BKU07} do not hold in our situation. Solutions to \eqref{1.general} blow up in finite time if the nonlinear chemosensitivity is strong enough and an adequate relation between nonlinear diffusion and chemosensitivity holds \cite{CiSt12}. 
Pattern formation for $a(\rho)=\rho^{m-1}$ and $b(\rho)=\chi\rho(1-\rho)$ was shown in \cite{PeZh24} when $m>2$, while the steady state for $m\le 2$ is shown to be unique and constant. Notice that in our model, the function $a(\rho)=(1-\rho)\rho^{m-1}$ features a stronger nonlinearity, which complicates the analysis. Finally, we mention the reviews \cite{ArTy21,BBTW15}, which provide an overview of the various variants of the Keller--Segel equations.

In this paper, we prove the existence of global weak solutions to  \eqref{1.rho}--\eqref{1.bic}, investigate the weak--strong uniqueness, study pattern formation for $m>2$, study the long-time behavior for $m\le 2$, as well as perform the parabolic--elliptic limit $\tau\to 0$ and the vanishing diffusion limit $\eta\to 0$. In the following, we make precise our main results.

Let $\Omega\subset\R^d$ ($d\ge 1$) be a bounded domain with boundary $\pa\Omega\in C^{1,1}$ and let $\chi$, $\tau$, $\eta>0$. The initial data satisfies $\rho^0\in L^m(\Omega)$, $c^0\in H^1(\Omega)$, and $0\le\rho^0$, $c^0\le 1$ in $\Omega$. We set $\Omega_T=(0,T)\times\Omega$ for $T>0$. Our first result is the existence of global weak solutions.

\begin{theorem}[Existence of weak solutions]\label{thm.ex}
Let $m\ge 1$ and set $f(\rho)= \rho^{m}/m-\rho^{m+1}/(m+1)$. Then there exists a weak solution $(\rho,c)$ to \eqref{1.rho}--\eqref{1.bic} on $[0,\infty)$ satisfying $0\le\rho,\,c\le 1$ a.e.\ in $[0,\infty)\times\Omega$,
\begin{align*}
  & f(\rho)\in L^2(0,T;H^1(\Omega)), \quad
  c\in L^\infty(0,T;H^1(\Omega)), \\ 
  & \pa_t\rho\in L^2(0,T;H^1(\Omega)'), \quad 
  \pa_t c,\,\Delta c\in L^2(\Omega_T)
\end{align*}
for any $T>0$, and $(\rho,c)$ satisfies \eqref{1.rho}--\eqref{1.c} in the weak sense. 
\end{theorem}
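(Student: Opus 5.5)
We construct solutions by regularization, approximation and compactness, in four steps: a regularized problem, entropy estimates, compactness, and passage to the limit.

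\emph{Step 1: regularized problem.} We regularize the degenerate diffusion. We replace $(1-\rho)\rho^{m-1}$ by $a_\eps(\rho)=(1-[\rho]_+)_+[\rho]_+^{m-1}+\eps$, and the drift coefficient $\rho(1-\rho)$ by its truncation $b(\rho)=[\rho]_+(1-[\rho]_+)_+$. We also mollify the initial data and take $\rho^0_\eps$ with values in $[\eps,1-\eps]$. For fixed $\eps$ the system is uniformly parabolic. The map $\hat c\mapsto c\mapsto\rho$ with frozen coefficients is handled by Leray--Schauder's theorem (or a Galerkin scheme). Here $c$ solves the linear heat equation with a bounded source, so maximal regularity gives $\pa_t c,\Delta c\in L^2$ and, in the parabolic case, $\na c\in L^\infty$ via $L^p$ theory. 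Hence the drift is bounded and the fixed point exists.

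\emph{Step 2: box property.} We test the $\rho$-equation with $\rho_-$ and with $(\rho-1)_+$. Since $b(\rho)=0$ for $\rho\le0$ and for $\rho\ge1$, the drift term vanishes on these sets, and we obtain $0\le\rho\le1$. The weak maximum principle applied to the $c$-equation with $0\le\rho\le1$ and $0\le c^0\le1$ then gives $0\le c\le1$. Once $\rho\in[0,1]$, the truncations in Step 1 are inactive.

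\emph{Step 3: uniform estimates.} These are the main step. We test the $\rho$-equation with $f(\rho)$, where $f'(\rho)=(1-\rho)\rho^{m-1}$, which gives
\[
\frac{d}{dt}\int_\Omega F(\rho)\,dx+\int_\Omega|\na f(\rho)|^2dx=\chi\int_\Omega\rho(1-\rho)\na c\cdot\na f(\rho)\,dx.
\]
By Young's inequality and $0\le\rho\le1$, the right-hand side is bounded by $\frac12\|\na f(\rho)\|_2^2+C\|\na c\|_2^2$. For the chemical, testing with $c$ and with $-\Delta c$ and $\pa_t c$ gives $c\in L^\infty(H^1)$ and $\pa_t c,\Delta c\in L^2(\Omega_T)$, uniformly in $\eps$, because $\rho$ is bounded. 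The $\eps$-diffusion only adds the nonnegative term $\eps\int|\na\rho|^2 f'(\rho)$. Alternatively we test with $\rho$ itself to obtain $\eps\|\na\rho\|_2^2$ and then $\sqrt\eps\na\rho\in L^2$. The flux $a_\eps(\rho)\na\rho=\na f(\rho)+\eps\na\rho$ is therefore bounded in $L^2$, which bounds $\pa_t\rho$ in $L^2(H^1)'$. These estimates are uniform on every $(0,T)$, so the solutions are global.

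\emph{Step 4: compactness and limit.} The limit in the nonlinearities is the expected obstacle, since the degeneracy at both $\rho=0$ and $\rho=1$ prevents an $H^1$ bound on $\rho$. We use $f(\rho)\in L^2(H^1)$, $\pa_t\rho\in L^2(H^1)'$, and the fact that $f$ is strictly increasing on $[0,1]$. A nonlinear Aubin--Lions lemma (e.g.\ Dreher--J\"ungel, or Moussa's version for $\pa_t\rho$ and $\na f(\rho)$) then gives $\rho_\eps\to\rho$ strongly in $L^2(\Omega_T)$. If this lemma is unavailable, we argue with Kruzhkov-type time translates as a fallback. Strong convergence yields $f(\rho_\eps)\to f(\rho)$ and $b(\rho_\eps)\to b(\rho)$ strongly in every $L^p$, $p<\infty$. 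For the chemical, Aubin--Lions gives $c_\eps\to c$ strongly in $L^2(H^1)$. Consequently $b(\rho_\eps)\na c_\eps\to b(\rho)\na c$ in $L^1$, and $\na f(\rho_\eps)\rightharpoonup\na f(\rho)$ weakly in $L^2$. The term $\eps\na\rho_\eps=\sqrt\eps\,(\sqrt\eps\na\rho_\eps)$ tends to zero. We then pass to the limit in the weak formulation, which gives the solution with the regularity stated in Theorem \ref{thm.ex}. The case $m=1$ is identical.
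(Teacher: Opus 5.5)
Your proof is correct. It shares the paper's overall architecture: regularization by $\eps\Delta\rho$ with truncated coefficients, Leray--Schauder, the box property from testing with $(\rho-1)^+$ and $\rho^-$, and Moussa's nonlinear Aubin--Lions lemma for the increasing function $f$. The difference is how you obtain the key $\eps$-uniform bound on $\na f(\rho_\eps)$; your route is more elementary.

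\textbf{The paper's route.} It tests with a $\delta$-regularization of the singular chemical potential $\phi_\eps(z)=\int_{1/2}^z\big(\eps/(s(1-s))+s^{m-2}\big)\,ds$ and then lets $\delta\to0$. It combines the result with the test function $-\chi c_\eps$ and the $c$-equation to obtain the free-energy inequality \eqref{2.phic}. The dissipation term $\rho_\eps(1-\rho_\eps)|\na(\phi_\eps(\rho_\eps)-\chi c_\eps)|^2$, together with \eqref{2.epsrho}, then controls $\na f(\rho_\eps)$.

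\textbf{Your route.} You test directly with $f(\rho_\eps)$, which is Lipschitz on $[0,1]$ and so admissible without further regularization. You absorb the drift using $\rho(1-\rho)\le\frac14$ and the separate $L^2(0,T;H^1(\Omega))$ bound on $c_\eps$.

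\textbf{Comparison.} Your argument is shorter and avoids the logarithmic and (for $m<2$) power singularities at $\rho=0$ and $\rho=1$. It also treats all $m\ge1$ in one stroke. The paper's argument, in exchange, yields the regularized analogue of the gradient-flow energy dissipation, which is more than Theorem \ref{thm.ex} needs. You also get $c_\eps\to c$ strongly in $L^2(0,T;H^1(\Omega))$ from the $H^2$ bound, whereas the paper only uses $\na c_\eps\rightharpoonup\na c$ weakly; both suffice, since $\rho_\eps(1-\rho_\eps)$ converges a.e.\ and is bounded.

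\textbf{Two small points.}
\begin{itemize}
\item Testing with $\rho_\eps$ and applying Young's inequality against the $\eps$-term only gives $\eps\na\rho_\eps$ bounded in $L^2(\Omega_T)$, as in \eqref{2.epsrho}. Your stronger claim that $\sqrt\eps\,\na\rho_\eps$ is bounded needs more. For $m\le 3$ you can absorb into the degenerate diffusion, since $\rho^2(1-\rho)^2/((1-\rho)\rho^{m-1})\le1$. For all $m$, write $\rho(1-\rho)\na\rho\cdot\na c=\na g(\rho)\cdot\na c$ with $g(\rho)=\rho^2/2-\rho^3/3$ and integrate by parts onto $\Delta c_\eps$, which is bounded in $L^2(\Omega_T)$. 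Either bound is enough, because $\eps\na\rho_\eps\to0$ in the sense of distributions.
\item The claim $\na c\in L^\infty$ is not needed in the fixed-point step. $\na c\in L^2$ suffices because the truncated drift coefficient is bounded.
\end{itemize}
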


To prove this theorem, we regularize equation \eqref{1.rho} by adding the diffusion $\eps\Delta\rho$ with $\eps>0$ to \eqref{1.rho} and truncating the nonlinearities. The regularized problem is solved by a fixed-point argument. The approximate entropy inequality yields estimates uniform in $\eps$, which allow for the limit $\eps\to 0$ by applying the Aubin--Lions lemma in the version of \cite{Mou16} to the (monotone) function $f(\rho)$. 

The proof of uniqueness of weak solutions to degenerate Keller--Segel systems is very delicate, and we are aware of results mainly for $a(\rho)=\rho^m$ and $b(\rho)=\chi\rho$ only \cite{KaSu14,KoSz08}. A nonlocal sensitivity was considered in \cite{HWS16}. The proof in \cite{DGJ01} requires that $b'$ vanishes at $\rho=0$, which is not the case in our model. Therefore, we prove the (weaker) weak--strong uniqueness property. The aim is to show that a weak solution and a strong solution (if it exists) emanating from the same initial data coincide. Weak--strong uniqueness means that classical solutions are stable within the class of weak solutions. 

\begin{theorem}[Weak--strong uniqueness]\label{thm.wsu}
Let $1<m\le 2$, let $(\rho,c)$ and $(\bar\rho,\bar{c})$ be two weak solutions to \eqref{1.rho}--\eqref{1.bic} with the same initial data. Assume that there exists $\kappa\in(0,1)$ such that $\kappa\le\bar\rho\le 1-\kappa$ in $\Omega_T$ and that
$\na\bar{\rho}$, $\na\bar{c}\in L^\infty(\Omega_T)$. Furthermore, we suppose that $(\bar\rho,\bar{c})$ satisfies for all $\lambda>0$ the entropy equality
\begin{align*}
  \int_\Omega&\bigg(\bar\rho(\log\bar\rho-1)
  + (1-\bar\rho)(\log(1-\bar\rho)-1) 
  + \frac{\lambda}{2}\tau c^2\bigg)dx\bigg|_0^t \\
  &+ \int_0^t\int_\Omega\big(\bar{\rho}^{m-2}|\na\bar\rho|^2
  + \lambda(\eta|\na\bar{c}|^2 + \bar{c}^2)\big)dxds
  = \int_0^t\int_\Omega\big(\chi\na\bar{c}\cdot\na\bar\rho 
  + \lambda\bar\rho\bar{c}\big)dxds. \nonumber 
\end{align*}
Then $\rho=\bar\rho$ and $c=\bar{c}$ a.e.\ in $\Omega_T$.
\end{theorem}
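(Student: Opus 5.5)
We will use the relative entropy method built on the entropy appearing in the statement. Set $h(\rho)=\rho(\log\rho-1)+(1-\rho)(\log(1-\rho)-1)$, so that $h'(\rho)=\log\frac{\rho}{1-\rho}$ and $h''(\rho)=\frac{1}{\rho(1-\rho)}$. Define the relative entropy
\begin{equation*}
  H(\rho,c|\bar\rho,\bar c)=\int_\Omega\Big(h(\rho)-h(\bar\rho)-h'(\bar\rho)(\rho-\bar\rho)+\frac{\lambda\tau}{2}(c-\bar c)^2\Big)dx .
\end{equation*}
Since $0\le\rho\le1$ and $h''\ge4$, the first part dominates $2\|\rho-\bar\rho\|_{L^2}^2$. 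The identity $H(t)=E(\rho)(t)-E(\bar\rho)(t)-\int_\Omega h'(\bar\rho)(\rho-\bar\rho)dx+\dots$ will be expanded in three pieces: the entropy inequality for the weak solution $(\rho,c)$, the entropy equality assumed for $(\bar\rho,\bar c)$, and the cross term $\int_\Omega h'(\bar\rho)(\rho-\bar\rho)dx$ together with $\int\tau c\bar c\,dx$. The entropy inequality for $(\rho,c)$ comes from the construction in Theorem \ref{thm.ex}: testing the regularized equation with $h'(\rho)$ (suitably truncated) and passing to the limit by weak lower semicontinuity. The cross terms will be computed by using $h'(\bar\rho)\in L^2(0,T;H^1)$ as a test function in the weak formulation for $\rho$, which is admissible because $\kappa\le\bar\rho\le1-\kappa$ and $\na\bar\rho\in L^\infty$. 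We also use the weak formulation for $\bar\rho$ with test function $\rho$ via the chain rule, or, more simply, the time-derivative duality pairing $\langle\pa_t\rho,h'(\bar\rho)\rangle+\langle\pa_t\bar\rho,h''(\bar\rho)(\rho-\bar\rho)\rangle$. The $c$-equations are linear, so $\tau\pa_t(c-\bar c)=\eta\Delta(c-\bar c)-(c-\bar c)+(\rho-\bar\rho)$ is tested with $c-\bar c$ directly.

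Note that the flux $(1-\rho)\rho^{m-1}\na\rho$ times $h''(\rho)\na\rho$ gives exactly $\rho^{m-2}|\na\rho|^2$, and the drift term gives $\chi\na c\cdot\na\rho$. After collecting terms, the dissipation part should take the form
\begin{equation*}
  \int\!\!\int\big(\rho^{m-2}|\na\rho|^2-2\rho^{m-2}\ldots\big)\;\approx\;\int\!\!\int\rho^{m-2}\Big|\na\rho-\frac{\bar\rho^{m-1}(1-\bar\rho)}{\ldots}\Big|^2,
\end{equation*}
that is, a term like $\int\!\int\rho(1-\rho)\big|\na(F(\rho)-F(\bar\rho))\big|^2$ with $F'(s)=s^{m-2}(1-s)^{0}$ (approximately $\rho^{m-1}/(m-1)$), plus remainder terms of the form $(\rho(1-\rho)-\text{stuff}(\bar\rho))\na\bar\rho\cdot(\ldots)$. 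I would organize the computation as $\int\!\int M(\rho)\big|\na(\phi(\rho)-\phi(\bar\rho))\big|^2$, where $\phi'(s)=s^{m-2}$ up to the $h''$ factor, and treat the remainders as $\int\!\int (\text{Lipschitz in }\rho-\bar\rho)\cdot\na\bar\rho\cdot\na(\phi(\rho)-\phi(\bar\rho))$. Using $\|\na\bar\rho\|_\infty$ and Young's inequality, these are absorbed into the good dissipation plus $C\|\rho-\bar\rho\|_{L^2}^2$. The restriction $1<m\le2$ is used here: $s^{m-2}$ is decreasing, so $|\phi(\rho)-\phi(\bar\rho)|$ and the coefficient differences such as $\rho^{m-1}-\bar\rho^{m-1}$ are controlled by $|\rho-\bar\rho|$ times constants depending on $\kappa$, since $m-1\le1$ and $\bar\rho\ge\kappa$ (Hölder continuity of $s^{m-1}$ suffices).

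The chemotactic cross terms
\begin{equation*}
  \chi\int\!\!\int\big(\rho(1-\rho)\na c-\bar\rho(1-\bar\rho)\na\bar c\big)\cdot\na(h'(\rho)-h'(\bar\rho))
\end{equation*}
will be rewritten, after the $h''$ cancellation, as $\chi\int\!\int\na(c-\bar c)\cdot\na(\rho-\bar\rho)$ plus terms involving $(\rho-\bar\rho)\na\bar c\cdot\na\bar\rho$, which are bounded using $\na\bar c\in L^\infty$. The term $\chi\na(c-\bar c)\cdot\na(\rho-\bar\rho)$ should be integrated by parts into $-\chi\Delta(c-\bar c)(\rho-\bar\rho)$. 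Here $\lambda$ is chosen large, so that $\lambda\eta\|\na(c-\bar c)\|^2$, together with $\Delta c\in L^2$ (equivalently, testing the $c$-difference equation with $-\Delta(c-\bar c)$), absorbs it into $C\|\rho-\bar\rho\|_2^2+C\|c-\bar c\|_{H^1}^2$. The result should be $H(t)\le C\int_0^tH\,ds$, and Gronwall together with $H(0)=0$ gives the conclusion.

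The main obstacle is rigorously justifying the entropy inequality for the weak solution and the degenerate dissipation comparison. Where $\rho\to0$, the factor $\rho^{m-2}$ blows up for $m<2$, and the weak solution only has $f(\rho)\in L^2H^1$. I would first derive the inequality with $h_\delta(\rho)=h(\rho+\delta)$-type regularization at the approximate level of Theorem \ref{thm.ex}, and keep the relative dissipation only in the form $\int\!\int|\na(\text{function of }\rho)|^2$ that is lower semicontinuous, letting $\delta\to0$ at the end.
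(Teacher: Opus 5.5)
\textbf{Overall.} Your architecture is the paper's: the same relative entropy, and the entropy inequality for the constructed solution via $\delta$-regularized logarithms at the $\eps$-level. You also use the same test functions $h'(\bar\rho)$ and $h''(\bar\rho)(\rho-\bar\rho)$; the latter is admissible only because the entropy inequality and $\rho^{m-2}\ge 1$ give $\rho\in L^2(0,T;H^1(\Omega))$. The conclusion is the same Gronwall argument via $h''\ge 4$.

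\textbf{The gap: the diffusive dissipation.} The decisive step is the structure of the diffusive dissipation. You leave it as ``$\ldots$'' and guess a form that does not match the entropy you chose. Since $h$ is the mixing entropy and not the free energy, the diffusive flux reads $(1-\rho)\rho^{m-1}\na\rho=\rho^m(1-\rho)^2\na h'(\rho)$. The correct good term is therefore $\rho^m|L|^2$ with $L=(1-\rho)\na(h'(\rho)-h'(\bar\rho))$, i.e.\ $\rho^{m-2}|\na\rho-g\na\bar\rho|^2$ with $g=\rho(1-\rho)/(\bar\rho(1-\bar\rho))$.

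Your candidate $\rho(1-\rho)|\na(F(\rho)-F(\bar\rho))|^2$ with $F'(s)=s^{m-2}$ is the dissipation of the free-energy relative entropy. It carries an extra factor $\rho^{m-1}(1-\rho)$ and, used as the good term, cannot absorb the remainders near $\rho=1$. Extracting $\rho^m|L|^2$ instead, the diffusive integrand is exactly
\[
  -\rho^m|L|^2+\Big(\rho\bar\rho^{m-2}-\frac{\rho^m(1-\rho)}{\bar\rho(1-\bar\rho)}\Big)L\cdot\na\bar\rho-\frac{\bar\rho^{m-3}}{1-\bar\rho}(\rho-\bar\rho)^2|\na\bar\rho|^2 .
\]
The coefficient of $L\cdot\na\bar\rho$ vanishes at $\rho=\bar\rho$ but behaves like $\rho\bar\rho^{m-2}$ as $\rho\to 0$. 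So being ``Lipschitz in $\rho-\bar\rho$'' is not enough. Young's inequality against the degenerate weight $\rho^m$ requires the coefficient to be $O(\rho^{m/2}|\rho-\bar\rho|)$, which holds precisely because $\rho^2\le\rho^m$ when $m\le 2$.

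This weight comparison is where $m\le 2$ enters; the paper uses it for $I_2$ and $I_5$. You attribute the restriction to the bound $|\rho^{m-1}-\bar\rho^{m-1}|\le C(\kappa)|\rho-\bar\rho|$, but that bound holds for every $m>1$ once $\bar\rho\ge\kappa$.

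\textbf{The chemotactic term.} Your treatment is a legitimate alternative to the paper's. The paper keeps $\chi\rho L\cdot\na(c-\bar c)$, bounds it by $\delta\rho^2|L|^2\le\delta\rho^m|L|^2$ plus $C|\na(c-\bar c)|^2$, and absorbs the latter by taking $\lambda$ large. As written, your version does not close.

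After integrating by parts you face $\|\Delta(c-\bar c)\|_{L^2}^2$. Your $H$ does not produce it: its $c$-part $\frac{\lambda\tau}{2}\|c-\bar c\|^2$ only yields $\lambda\eta\|\na(c-\bar c)\|^2$. Nor is it small merely because $\Delta c\in L^2$. Testing with $-\Delta(c-\bar c)$ generates $\frac{d}{dt}\|\na(c-\bar c)\|^2$, which must then belong to the functional. Otherwise the $\|c-\bar c\|_{H^1}^2$ on your Gronwall side is not controlled by $H$.

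Adding $\frac{\mu\tau}{2}\|\na(c-\bar c)\|^2$ to $H$ fixes this. The difference equation tested with $-\mu\Delta(c-\bar c)$ gives the dissipation $\mu\eta\|\Delta(c-\bar c)\|^2$. The combined cross term $-(\mu+\chi)\int_\Omega(\rho-\bar\rho)\Delta(c-\bar c)\,dx$ is then absorbed at the cost of $C\|\rho-\bar\rho\|_{L^2}^2$. This route avoids the weight comparison in the drift term, but the diffusive remainder above still requires $m\le 2$.
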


The proof of the weak--strong uniqueness property is based on the relative entropy
\begin{align*}
  H(\rho,c|\bar\rho,\bar{c}) = \int_\Omega\bigg(
  \rho\log\frac{\rho}{\bar\rho} + (1-\rho)\log\frac{1-\rho}{1-\bar\rho}
  + \frac{\lambda}{2}\tau(c-\bar{c})^2\bigg)dx,
\end{align*}
where $(\rho,c)$ is a weak solution and $(\bar\rho,\bar{c})$ is a strong solution (i.e.\ a weak solutions satisfying the regularity stated in Theorem \ref{thm.wsu}) with the same initial data. The aim is to show that there exists $C>0$ such that
\begin{align*}
  \frac{d}{dt}H(\rho,c|\bar\rho,\bar{c})
  \le CH(\rho,c|\bar\rho,\bar{c}) \quad\mbox{for }t>0,
\end{align*}
which implies that $H((\rho,c)(t)|(\bar\rho,\bar{c})(t))=0$ and hence $\rho(t)=\bar\rho(t)$, $c(t)=\bar{c}(t)$ for all $t>0$. The restriction on the parameter $m$ comes from the different exponents in the diffusion and drift terms of \eqref{1.rho} that need to be estimated. The condition $\kappa\le\bar\rho\le 1-\kappa$ is technical; it ensures that the relative entropy is well defined.

Next, we investigate the steady states and the long-time behavior. More precisely, we consider equilibrium solutions, i.e.\ stationary solutions with vanishing flux. We show that for $1<m\le 2$, the steady state is unique and constant; see Proposition \ref{prop.unique}. When $m>2$ and $\chi$ is in some range, we prove for the one-dimensional equations that there exist increasing steady states, thus showing pattern formation; see Proposition \ref{prop.inc}. The proof relies on a reduction of the problem to a first-order equation for $c$ and a careful analysis of the involved nonlinearities and parameter dependencies. Because of these results, the convergence to the (unique) steady state can only be expected in the case $1<m\le 2$.

\begin{theorem}[Exponential time decay]\label{thm.time}
Let $1<m\le 2$, $\chi\le 1$, $|\Omega|=1$, $M=\int_\Omega \rho_0 dx = \int_\Omega c^0 dx$, and let $(\rho,c)$ be the solution constructed in Theorem \ref{thm.ex}. Then there exist $C>0$, depending on $(\tau, \rho^0,c^0,\Omega,M)$, and $\mu>0$, depending on $(\eta, \tau, \Omega,M)$, such that for all $t>0$,
\begin{align*}
  \|\rho(t)-M\|_{L^2(\Omega)} + \|c(t)-M\|_{H^1(\Omega)}
  \le C e^{-\mu t}.
\end{align*} 
\end{theorem}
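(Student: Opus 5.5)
We use a relative entropy (Lyapunov functional) with respect to the constant steady state $(M,M)$, adapted to the mobility $\rho(1-\rho)$, and show it decays exponentially. Mass conservation gives $\int_\Omega\rho(t)\,dx=M$. Integrating \eqref{1.c} gives $\tau\frac{d}{dt}\int_\Omega c\,dx = M-\int_\Omega c\,dx$. Since $\int_\Omega c^0dx=M$, this yields $\int_\Omega c(t)\,dx=M$ for all $t$.

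Because the diffusion flux equals $\rho(1-\rho)\na(\rho^{m-1}/(m-1))$, we take
\begin{align*}
  L(t) = \int_\Omega\bigg(\rho\log\frac{\rho}{M} + (1-\rho)\log\frac{1-\rho}{1-M}\bigg)dx + \frac{\lambda\tau}{2}\int_\Omega(c-M)^2dx ,
\end{align*}
i.e.\ the relative entropy $H$ from the weak--strong uniqueness part with $(\bar\rho,\bar c)=(M,M)$. Formally, testing \eqref{1.rho} with $\log(\rho/(1-\rho))$ and \eqref{1.c} with $\lambda(c-M)$ gives
\begin{align*}
  \frac{dL}{dt} + \int_\Omega\rho^{m-2}|\na\rho|^2dx + \lambda\int_\Omega\big(\eta|\na c|^2+(c-M)^2\big)dx = \chi\int_\Omega\na c\cdot\na\rho\,dx + \lambda\int_\Omega(\rho-M)(c-M)dx.
\end{align*}
The drift term simplifies exactly because $\rho(1-\rho)\cdot\na\rho/(\rho(1-\rho))=\na\rho$. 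Rigorously, we would derive this inequality at the $\eps$-regularized level used for Theorem \ref{thm.ex}. There $\rho$ can be kept away from $0$ and $1$ by truncation and the extra $\eps\Delta\rho$ term, so we obtain the inequality with $\le$ and pass to the limit by lower semicontinuity.

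Next we absorb the right-hand side. Since $m\le2$ and $0\le\rho\le1$, we have $\rho^{m-2}\ge1$, so the dissipation controls $\|\na\rho\|_{L^2}^2$. By Young's inequality, $\chi\na c\cdot\na\rho\le\frac12|\na\rho|^2+\frac{\chi^2}{2}|\na c|^2$. By Poincaré (using zero mean of $\rho-M$), $\lambda(\rho-M)(c-M)\le \frac{1}{4}C_P^{-1}\|\rho-M\|^2\cdot\ldots$, i.e.\ the product is split between $\frac14\|\na\rho\|^2$ (via $\|\rho-M\|_{L^2}\le C_P\|\na\rho\|_{L^2}$) and $\frac{\lambda}{2}\|c-M\|^2$. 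This requires $\lambda^2 C_P^2\lesssim\lambda$, i.e.\ $\lambda$ small. It also requires $\lambda\eta>\chi^2/2$, i.e.\ $\lambda$ large.

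This competition is the main obstacle, and it is where $\chi\le1$ (and $\eta$) must enter. To resolve it, we first try a different test: test \eqref{1.c} with $-\lambda\Delta c$ instead of $c-M$. This produces $\lambda\eta\|\Delta c\|^2+\lambda\|\na c\|^2$ on the left and $-\lambda\int\rho\Delta c=\lambda\int\na\rho\cdot\na c$ on the right. With $\lambda=\chi$ the cross term $\chi\int\na c\cdot\na\rho$ can then be combined cleanly, and $\chi\le1$ gives the needed margin against $\|\na\rho\|^2$. Accordingly, the functional would use $\frac{\lambda\tau}{2}\|\na c\|^2$ plus a small multiple of $\|c-M\|^2$. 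We end with $L'+\kappa(\|\na\rho\|^2+\|\na c\|^2+\|c-M\|^2)\le0$ for some $\kappa>0$ depending on $\eta,\tau,\Omega,M$.

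Finally, we compare $L$ with the dissipation. Since $0\le\rho\le1$, the entropy density $s\log(s/M)+(1-s)\log((1-s)/(1-M))$ is bounded above by $C_M(s-M)^2$ on $[0,1]$; the constant is finite because $0<M<1$, which we would assume. Together with Poincaré--Wirtinger for $\rho-M$ and $c-M$ (both have zero mean), this gives $L\le C(\|\na\rho\|^2+\|\na c\|^2+\|c-M\|^2)$, hence $L'\le-\mu L$ and $L(t)\le L(0)e^{-\mu t}$. The Csiszár--Kullback inequality (or the lower bound $\ge 2(s-M)^2$) gives $\|\rho-M\|_{L^2}^2\le CL$, and the $c$-part of $L$ controls $\|c-M\|_{H^1}^2$. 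Taking square roots yields the claim with exponent $\mu/2$.
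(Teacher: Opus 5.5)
Your argument is correct, and its core matches the paper's. The paper's functional $H_1$ in \eqref{1.relent} is exactly your mixing entropy relative to $M$ plus a multiple of $\frac{\tau}{2}\|\nabla c\|_{L^2}^2$. It is obtained, as in your version, by adding \eqref{3.claim} to \eqref{1.c} tested with $-\Delta c$, and absorbing the cross term with $\rho^{m-2}\ge 1$, $\chi\le 1$ and $K_1\|\nabla c\|_{L^2}^2\le\|\Delta c\|_{L^2}^2$. The differences are in the bookkeeping, and in two places they favor your version.

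\emph{First}, you keep strictly positive dissipation in $\nabla c$, and, through the small extra multiple of $\frac{\tau}{2}\|c-M\|_{L^2}^2$, also in $c-M$. So the whole functional, including its $c$-part, is dominated by the dissipation, and the $H^1$ decay of $c$ is read off directly from $L$. The paper takes $\eps=\eta K_1$ in Young's inequality, which leaves no $\nabla c$ dissipation in \eqref{4.H}. Its step from \eqref{4.H} to $\frac{d}{dt}H_1\le-\mu_0 H_1$ therefore controls only the entropy part of $H_1$, not $\frac{\tau}{2}\|\nabla c\|^2$; any $\eps<\eta K_1$ repairs this. The paper then recovers the decay of $c$ by two separate Gronwall arguments on \eqref{1.c}, driven by the decay of $\|\rho-M\|_{L^2}$.

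\emph{Second}, you replace the logarithmic Sobolev inequality the paper cites by the elementary pointwise bound $s\log\frac{s}{M}+(1-s)\log\frac{1-s}{1-M}\le\frac{(s-M)^2}{M(1-M)}$ (from $\log x\le x-1$), combined with Poincar\'e--Wirtinger. This is self-contained and makes the $M$-dependence explicit.

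On the weight: the form $|\nabla\rho|^2-(\chi+\lambda)\nabla\rho\cdot\nabla c+\lambda(1+\eta K_1)|\nabla c|^2$ is coercive iff $(\chi+\lambda)^2<4\lambda(1+\eta K_1)$. Your choice $\lambda=\chi$ is optimal and actually gives decay for all $\chi<1+\eta K_1$. The paper's $\lambda=1$ covers only $(\chi+1)^2<4(1+\eta K_1)$. However, it keeps $C$ free of the factor $\chi^{-1}$ that your bound $\|\nabla c\|_{L^2}^2\le\frac{2}{\chi\tau}L$ introduces; the theorem does not list $\chi$ among the dependencies of $C$.

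Two details to fix.
\begin{enumerate}
\item At $\chi=1$ there is no margin at all unless you use $\chi\eta\|\Delta c\|_{L^2}^2\ge\chi\eta K_1\|\nabla c\|_{L^2}^2$. Write this step out, since it is where $\eta$ enters $\mu$.
\item At the $\eps$-level, $\rho_\eps$ is only known to satisfy $0\le\rho_\eps\le1$; it is not kept away from $0$ and $1$. The entropy inequality must therefore be derived with the shifted logarithm $\log(\rho_\eps+\delta)-\log(1-\rho_\eps+\delta)$, as in the proof of \eqref{3.claim}. The $c$-part can be done directly at the limit level, because $\partial_t c,\Delta c\in L^2(\Omega_T)$ and $\rho\in L^2(0,T;H^1(\Omega))$ for $m\le 2$.
\end{enumerate}
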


The proof is based on the relative entropy method, similar as for the weak--strong uniqueness proof, but here for the functional 
\begin{align}\label{1.relent}
  H_1(\rho,c|M,M) = \int_\Omega\bigg(\rho\log\frac{\rho}{M}
  + (1-\rho)\log\frac{1-\rho}{1-M} + \frac{\tau}{2}|\na c|^2\bigg)dx.
\end{align}
We prove that there exists $\mu>0$ such that for $t>0$,
\begin{align*}
  \frac{d}{dt}H_1(\rho,c|M,M) + \mu H_1(\rho,c|M,M) \le 0,
\end{align*}
implying that $H_1((\rho,c)(t)|M,M)\le H_1(\rho^0,c^0|M,M)e^{-\mu t}$. The result then follows from a Taylor expansion of the relative entropy and an $L^2(\Omega)$-type estimation for equation \eqref{1.c}.

We also prove the limit $\tau\to 0$, leading to the parabolic--elliptic model, and the vanishing diffusion limit $\eta\to 0$. 

\begin{theorem}[Parabolic--elliptic limit]\label{thm.D}
Let $m>1$ and let $(\rho_\tau,c_\tau)$ be a weak solution to \eqref{1.rho}--\eqref{1.bic}. Then there exists a subsequence (not relabeled) such that, as $\tau\to 0$,
\begin{align*}
  \rho_\tau\to\rho\quad\mbox{strongly in }L^p(\Omega_T)\mbox{ for all }
  p<\infty, \quad
  c_\tau\to c\quad\mbox{strongly in }L^2(0,T;H^1(\Omega)),
\end{align*}
and $(\rho,c)$ is a weak solution to 
\begin{align}\label{1.tau}
  \pa_t\rho = \diver\big((1-\rho)\rho^{m-1}\na\rho
  - \chi\rho(1-\rho)\na c\big), \quad
  0 = -\eta\Delta c - c + \rho\quad\mbox{in }\Omega,\ t>0,
\end{align}
with the initial and boundary conditions \eqref{1.bic}.
\end{theorem}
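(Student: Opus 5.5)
The plan is to derive estimates uniform in $\tau\in(0,1]$ from the entropy structure already used for Theorem \ref{thm.ex}, and then pass to the limit with a nonlinear Aubin--Lions argument applied to $f(\rho_\tau)$.

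\emph{Uniform bounds.} From the weak formulation, $0\le\rho_\tau,c_\tau\le 1$, so $\rho_\tau$ and $c_\tau$ are bounded in $L^\infty(\Omega_T)$ independently of $\tau$. To control gradients, I test \eqref{1.c} with $c_\tau$ and with $-\Delta c_\tau$. Testing with $-\Delta c_\tau$ gives
\begin{align*}
  \frac{\tau}{2}\frac{d}{dt}\|\na c_\tau\|_{L^2}^2 + \eta\|\Delta c_\tau\|_{L^2}^2 + \|\na c_\tau\|_{L^2}^2
  = -\int_\Omega\rho_\tau\Delta c_\tau\,dx
  \le \frac{\eta}{2}\|\Delta c_\tau\|_{L^2}^2 + \frac{1}{2\eta}|\Omega|.
\end{align*}
Integrating in time yields uniform bounds for $\na c_\tau$ and $\Delta c_\tau$ in $L^2(\Omega_T)$. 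Since $\tau\|\na c^0\|^2\le\|c^0\|_{H^1}^2$ stays bounded as $\tau\to0$, the initial term causes no trouble. With elliptic regularity on the $C^{1,1}$ domain, $c_\tau$ is bounded in $L^2(0,T;H^2(\Omega))$. I also get $\tau\pa_t c_\tau$ bounded in $L^2(\Omega_T)$: testing with $\pa_t c_\tau$ gives $\tau\|\pa_t c_\tau\|^2_{L^2(\Omega_T)}\le C$, so $\tau\pa_t c_\tau\to0$ in $L^2$.

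Next I bound $\na f(\rho_\tau)$. Testing \eqref{1.rho} with $f(\rho_\tau)$ and using the drift term $\chi\rho(1-\rho)\na c\cdot f'(\rho)\na\rho$ with $f'(\rho)=\rho^{m-1}(1-\rho)$, I get $\int|\na f(\rho)|^2\,dxdt$ on the left. The drift is controlled by Young's inequality with $\|\na c_\tau\|_{L^2(\Omega_T)}$, since $\rho(1-\rho)\le1$. The time-derivative term is $\frac{d}{dt}\int F(\rho)\,dx$ with $F'=f$, and it is bounded because $0\le\rho\le1$. This yields $f(\rho_\tau)$ bounded in $L^2(0,T;H^1)$. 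Then $\pa_t\rho_\tau$ is bounded in $L^2(0,T;H^1(\Omega)')$, because the flux $\na f(\rho_\tau)-\chi\rho_\tau(1-\rho_\tau)\na c_\tau$ is bounded in $L^2$.

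\emph{Compactness and identification.} I apply the Aubin--Lions lemma in the version of \cite{Mou16} to the monotone function $f$, exactly as in Theorem \ref{thm.ex}. This gives $\rho_\tau\to\rho$ strongly in $L^2(\Omega_T)$, and by the $L^\infty$ bound strongly in every $L^p(\Omega_T)$ with $p<\infty$. It also gives $f(\rho_\tau)\rightharpoonup f(\rho)$ weakly in $L^2(0,T;H^1)$.

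Strong convergence of $c_\tau$ is the main obstacle, because the $\tau\pa_t c_\tau$ bound degenerates and there is no uniform time compactness for $c_\tau$. I would instead compare with $c$ solving $-\eta\Delta c+c=\rho$. The difference $w_\tau=c_\tau-c$ satisfies $-\eta\Delta w_\tau+w_\tau=\rho_\tau-\rho-\tau\pa_t c_\tau$. Testing with $w_\tau$ gives
\begin{align*}
  \min\{\eta,1\}\|w_\tau\|^2_{L^2(0,T;H^1)}\le \|\rho_\tau-\rho\|_{L^2}\|w_\tau\|_{L^2}+\tau\Big|\int_0^T\!\!\int_\Omega\pa_t c_\tau w_\tau\,dxdt\Big|.
\end{align*}
The first term on the right tends to zero. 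For the second, I use $\sqrt\tau\|\pa_t c_\tau\|_{L^2}\le C$, so it is $O(\sqrt\tau)\|w_\tau\|_{L^2}\to0$. This gives $c_\tau\to c$ strongly in $L^2(0,T;H^1(\Omega))$.

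Passing to the limit in the weak formulations is then routine. The products $\rho_\tau(1-\rho_\tau)\na c_\tau$ converge as a product of a strongly converging $L^p$ sequence and a strongly converging $L^2$ sequence. The term $\tau\pa_t c_\tau$ vanishes in the limit. The initial condition for $\rho$ survives through the $H^1(\Omega)'$ time-derivative bound.
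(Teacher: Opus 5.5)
Your proof is correct, but it takes a different route from the paper in both the a priori estimates and the treatment of $c_\tau$.

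\textbf{The paper's route.} The paper gets its uniform bounds from the entropy $F_\tau$ in \eqref{1.Ftau}. It combines the entropy inequality \eqref{3.claim} with the time derivative of $\tau\int_\Omega\rho_\tau c_\tau\,dx$, so that the cross terms $\chi\na\rho_\tau\cdot\na c_\tau$ cancel. This bounds $\na\rho_\tau^{m/2}$ (Lemma~\ref{lem.entD}). The bound $\sqrt\tau\|\pa_t c_\tau\|_{L^2(\Omega_T)}+\|c_\tau\|_{L^\infty(0,T;H^1(\Omega))}\le C$ comes from testing with $\pa_t c_\tau$ (Lemma~\ref{lem.ceps}). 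For $c_\tau$ the paper only extracts weak limits of $c_\tau$ and $\na c_\tau$ in $L^2(\Omega_T)$. That is enough for $\rho_\tau(1-\rho_\tau)\na c_\tau$, since $\rho_\tau$ converges strongly.

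\textbf{Your route and what it buys.} You test the $\rho$-equation with $f(\rho_\tau)$, so the dissipation is exactly $|\na f(\rho_\tau)|^2$, and you get $\na c_\tau,\Delta c_\tau$ from the test function $-\Delta c_\tau$. This gains two things:
\begin{itemize}
\item Your estimate holds for every $m>1$. By contrast, \eqref{3.claim} is proved in Section~\ref{sec.wsu} only for $1<m\le 2$, because absorbing $\chi\na c\cdot\na\rho$ into $\rho^{m-2}|\na\rho|^2$ fails near $\rho=0$ when $m>2$. The theorem, however, is stated for $m>1$.
\item Your elliptic comparison for $w_\tau=c_\tau-c$ actually proves the strong convergence $c_\tau\to c$ in $L^2(0,T;H^1(\Omega))$ claimed in the statement. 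The paper's argument, which uses only weak convergence, leaves this unproved.
\end{itemize}
The paper's route additionally gives the entropy dissipation bound and $c_\tau\in L^\infty(0,T;H^1(\Omega))$, but neither is needed for the limit.

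\textbf{Steps you should write out.}
\begin{itemize}
\item The $\sqrt\tau$ bound. Testing \eqref{1.c} with $\pa_t c_\tau$ leaves the term $\int_0^t\int_\Omega\rho_\tau\pa_t c_\tau\,dxds$, and Young's inequality would cost a factor $1/\tau$. You must integrate by parts in time and bound $\int_0^t\langle\pa_t\rho_\tau,c_\tau\rangle ds$ by $\|\pa_t\rho_\tau\|_{L^2(0,T;H^1(\Omega)')}\|c_\tau\|_{L^2(0,T;H^1(\Omega))}$, as in Lemma~\ref{lem.ceps}. So this bound, on which your strong convergence of $c_\tau$ depends, has to be derived after the $f(\rho_\tau)$ estimate.
\item The chain rule. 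For a weak solution with only $f(\rho_\tau)\in L^2(0,T;H^1(\Omega))$, the identity $\int_0^t\langle\pa_t\rho_\tau,f(\rho_\tau)\rangle ds=\int_\Omega F(\rho_\tau(t))dx-\int_\Omega F(\rho^0)dx$ needs a chain-rule lemma of Alt--Luckhaus type for monotone nonlinearities. Alternatively, derive the estimate on the $\eps$-approximation of Section~\ref{sec.ex}; the bounds there are uniform in $\tau\le 1$.
\end{itemize}
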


\begin{theorem}[Vanishing diffusion limit]\label{thm.eta}
Let $1<m\le 2$ and let $(\rho_\eta,c_\eta)$ be a weak solution to \eqref{1.rho}--\eqref{1.bic}. Then there exists a subsequence (not relabeled) such that, as $\eta\to 0$,
\begin{align*}
  \rho_\eta\to \rho, \quad c_\eta\to c \quad\mbox{strongly in }
  L^2(\Omega_T),
\end{align*}
and $(\rho,c)$ is a weak solution to
\begin{align}\label{1.eta}
  \pa_t\rho = \diver\big((1-\rho)\rho^{m-1}\na\rho
  - \chi\rho(1-\rho)\na c\big), \quad
  \pa_t c = - c + \rho\quad\mbox{in }\Omega,\ t>0,
\end{align}
with the initial and boundary conditions \eqref{1.bic}.
\end{theorem}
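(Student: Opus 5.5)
The plan is to derive estimates uniform in $\eta\in(0,1)$ from the entropy inequality, use them to get compactness, and then pass to the limit in the weak formulation. The restriction $1<m\le 2$ is exactly what makes the gradient bound on $\rho$ independent of $\eta$.

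\textbf{Step 1: the $L^\infty$ bound.} The bound $0\le\rho_\eta,c_\eta\le 1$ holds independently of $\eta$. For $c_\eta$ it follows from the maximum principle, since $0\le\rho_\eta\le1$.

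\textbf{Step 2: the entropy inequality.} I use the entropy $\int_\Omega(\rho\log\rho+(1-\rho)\log(1-\rho)+\frac{\tau}{2}c^2)\,dx$, i.e.\ the functional of Theorem \ref{thm.wsu} with $\lambda=1$, which the weak solutions of Theorem \ref{thm.ex} satisfy as an inequality. Testing \eqref{1.rho} with $\log(\rho/(1-\rho))$ gives the dissipation $\int\rho^{m-2}|\na\rho|^2$ and the cross term $\chi\int\na c\cdot\na\rho$. This cross term cannot be absorbed into $\eta\int|\na c|^2$, because that control degenerates as $\eta\to0$. Instead I integrate by parts, $\chi\int\na c\cdot\na\rho=-\chi\int\rho\Delta c$, and obtain a bound on $\Delta c$ differently. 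Testing \eqref{1.c} with $-\Delta c$ gives
\[
\frac{\tau}{2}\frac{d}{dt}\int|\na c|^2+\eta\int|\Delta c|^2+\int|\na c|^2=\int\na\rho\cdot\na c .
\]
Since $\rho^{m-2}\ge1$ for $m\le2$ and $\rho\le1$, we have $|\na\rho|^2\le\rho^{m-2}|\na\rho|^2$. Hence Young's inequality bounds $\int\na\rho\cdot\na c$ by $\frac12\int\rho^{m-2}|\na\rho|^2+\frac12\int|\na c|^2$. Similarly, $\chi\int\na c\cdot\na\rho\le\frac12\int\rho^{m-2}|\na\rho|^2+\frac{\chi^2}{2}\int|\na c|^2$.

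\textbf{Step 3: closing the estimate.} Add the entropy inequality to a large multiple $K$ of the $\na c$ identity, choosing $K\ge\chi^2$. The resulting $|\na c|^2$ terms are absorbed, and the $\rho$-dissipation can be arranged to stay positive by weighting the entropy suitably, using $K$ times half of it. I expect this balancing of constants, done without using $\eta$, to be the main obstacle. If it fails, the fallback is to first derive $\int_0^T\int|\na c|^2\le C$ from the $c$-equation alone, via Gronwall with $\na\rho$ unknown, which circularly needs the entropy. So the combined functional is the intended route.

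This yields uniform bounds on
\[
\|\na\rho_\eta\|_{L^2(\Omega_T)},\quad \|c_\eta\|_{L^\infty(0,T;H^1)} \ (\text{given } c^0\in H^1),\quad \|\rho_\eta^{(m-2)/2}\na\rho_\eta\|_{L^2}.
\]
From the equations, $\pa_t\rho_\eta$ is then bounded in $L^2(0,T;H^1(\Omega)')$, since the flux is bounded in $L^2$. Also, $\tau\pa_tc_\eta=\eta\Delta c_\eta-c_\eta+\rho_\eta$ is bounded in $L^2(0,T;H^1(\Omega)')$, because $\eta\Delta c_\eta$ is controlled through $\eta\na c_\eta$ in $L^2$.

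\textbf{Step 4: compactness.} The Aubin--Lions lemma gives $\rho_\eta\to\rho$ strongly in $L^2(\Omega_T)$. For $c_\eta$, bounded in $L^2(0,T;H^1)$ with $\pa_t c_\eta$ bounded in $L^2(0,T;H^1(\Omega)')$, Aubin--Lions also gives strong $L^2$ convergence, and $\na c_\eta\rightharpoonup\na c$ weakly in $L^2$.

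\textbf{Step 5: passing to the limit.} The term $(1-\rho_\eta)\rho_\eta^{m-1}\na\rho_\eta$ equals $\na f(\rho_\eta)$ and converges weakly to $\na f(\rho)$. The drift term $\rho_\eta(1-\rho_\eta)\na c_\eta$ is a product of a strongly convergent bounded factor and a weakly convergent factor, so it converges weakly. In the $c$-equation the term $\eta\int\na c_\eta\cdot\na\phi$ tends to $0$. This gives \eqref{1.eta} in the weak sense, with the initial data recovered from the time-derivative bounds.
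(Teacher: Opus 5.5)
There is a genuine gap in Step 3, at the point you flagged yourself. Add $A$ times the entropy inequality to $K$ times the $-\Delta c$ identity. Both cross terms, $A\chi\int_\Omega\na c\cdot\na\rho\,dx$ and $K\int_\Omega\na\rho\cdot\na c\,dx$, then sit on the right-hand side with the same sign. The only $\eta$-independent dissipation is $A\int_\Omega\rho^{m-2}|\na\rho|^2dx+K\int_\Omega|\na c|^2dx$, and the bound $\rho^{m-2}\ge1$ cannot be improved where $\rho$ is close to $1$. Absorption therefore requires the form $A|a|^2+K|b|^2-(A\chi+K)\,a\cdot b$ to be nonnegative. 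With $s=K/A$ this means $(\chi+s)^2\le 4s$. This quadratic inequality in $s$ has discriminant $16(1-\chi)$, so for $\chi>1$ the form is indefinite for every choice of weights. Your specific even Young splitting, with entropy weight $A$, needs $A\chi^2<K<A$, i.e.\ $\chi<1$. Theorem \ref{thm.eta} imposes no condition on $\chi$, so the claimed absorption does not yield the uniform bounds in general. For $\chi<1$ your route does work, and it even gives $T$-independent bounds.

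The missing ingredient is a Gronwall argument on $[0,T]$ for the combined functional, and this is what the paper does. It adds \eqref{3.claim} and the $-\Delta c$ identity with weight one, i.e.\ it works with $F_\eta$ from \eqref{1.Feta}. Young's inequality then gives
\[
F_\eta(\rho_\eta,c_\eta)\Big|_0^t+\frac12\int_0^t\int_\Omega|\na\rho_\eta|^2dxds+\eta\int_0^t\int_\Omega(\Delta c_\eta)^2dxds\le\Big(\frac12(\chi+1)^2-1\Big)\int_0^t\int_\Omega|\na c_\eta|^2dxds .
\]
The last integral is not absorbed. Since $F_\eta$ contains $\frac{\tau}{2}\|\na c_\eta\|_{L^2}^2$ and the entropy density $\rho(\log\rho-1)+(1-\rho)(\log(1-\rho)-1)$ is bounded below, it is bounded by $\frac{2}{\tau}\int_0^t(F_\eta(\rho_\eta,c_\eta)+C)\,ds$. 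Gronwall gives $F_\eta\le C(T)$. Returning to the inequality then gives $\eta$-independent (but $T$-dependent) bounds on $\na\rho_\eta$, $\na c_\eta$ and $\sqrt\eta\,\Delta c_\eta$ in $L^2(\Omega_T)$.

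There is no circularity, and even your fallback closes. The $-\Delta c$ identity gives $\|\na c_\eta(t)\|_{L^2}^2\le\|\na c^0\|_{L^2}^2+\tau^{-1}\int_0^t\|\na\rho_\eta\|_{L^2}^2ds$. Inserting this into the entropy inequality yields a Gronwall inequality for $\int_0^t\|\na\rho_\eta\|_{L^2}^2ds$.

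With this repair, your Steps 4 and 5 agree with the paper. The paper additionally gets $\pa_t c_\eta$ bounded in $L^2(\Omega_T)$ from the $\sqrt\eta\,\Delta c_\eta$ bound, but your $L^2(0,T;H^1(\Omega)')$ bound suffices for Aubin--Lions.
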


If the limit equations are uniquely solvable, we recover the convergence for the whole sequence. To prove Theorems \ref{thm.D} and \ref{thm.eta}, we derive uniform estimates from entropy functionals. For the limit $\tau\to 0$, we use
\begin{align}\label{1.Ftau}
  F_\tau(\rho,c) = \int_\Omega\big(\rho(\log\rho-1) 
  + (1-\rho)(\log(1-\rho)-1) + \chi\tau\rho c\big)dx,
\end{align}
while the proof for $\eta\to 0$ is based on 
\begin{align}\label{1.Feta}
  F_\eta(\rho,c) = \int_\Omega \bigg(\rho(\log\rho-1) 
  + (1-\rho)(\log(1-\rho)-1) + \frac{\tau}{2}|\na c|^2\bigg)dx.
\end{align}
The associated entropy inequalities yield uniform bounds that allow us to conclude compactness of $(\rho,c)$ via the Aubin--Lions lemma. While the compactness arguments are standard, the difficulty is the justification of the entropy inequality, which is done by an approximation argument to deal with the logarithmic terms.

The paper is organized as follows. The existence result is proved in Section \ref{sec.ex}, and the weak--strong uniqueness of solutions is shown in Section \ref{sec.wsu}. In Section \ref{sec.steady}, we compute the steady states for the cases $1<m\le 2$ and $m>2$. The exponential decay of the solution to the unique steady state for $1<m\le 2$ is verified in Section \ref{sec.long}. The asymptotic limits of Theorems \ref{thm.D} and \ref{thm.eta} are proved in Section \ref{sec.asym}. We present in Section \ref{sec.num} numerical simulations in one space dimension to illustrate the behavior of the solutions for various parameters $m$, $\tau$, and $\eta$. Finally, the derivation of equation \eqref{1.rho} from a multiphase fluid model is presented in Appendix \ref{app}. 


\section{Existence of weak solutions}\label{sec.ex}

In this section, we prove Theorem \ref{thm.ex}. Because of the degeneracy, we first solve an approximate equation with an additional linear diffusion term with parameter $\eps$ by using the Leray--Schauder fixed-point theorem. Then estimates uniform in $\eps$ allow us to perform the limit $\eps\to 0$.

\subsection{Solution to an approximate problem}

We consider the following approximate equations:
\begin{align}
  \pa_t \rho_\eps &= \diver\big(\eps\na \rho_\eps  
  + (1-\rho_\eps)\rho_\eps^{m-1}\na\rho_\eps
  - \chi\rho_\eps(1-\rho_\eps)\na c_\eps\big), \label{2.rho} \\
  \tau\pa_t c_\eps &= \eta\Delta c_\eps - c_\eps + \rho_\eps
  \quad\mbox{in }\Omega,\ t>0, \label{2.c}
\end{align}
together with the initial and boundary conditions \eqref{1.bic}. We apply a fixed-point argument. For this, let $\bar\rho\in L^2(\Omega_T)$ and $\sigma\in[0,1]$. There exists a unique weak solution $c\in L^2(0,T;H^{1}(\Omega))\cap H^{1}(0,T;H^1(\Omega)')$ to
\begin{align}\label{2.clin}
  \tau\pa_t c = \eta\Delta c - c + [\bar\rho]\quad\mbox{in }\Omega,\ t>0,
  \quad \na c\cdot\nu=0\quad\mbox{on }\pa\Omega,\quad 
  c(0)=c^0\quad\mbox{in }\Omega,
\end{align}
where $[z]=\max\{0,\min\{1,z\}\}$ for $z\in\R$.
Multiplying this equation by $(c-1)^+=\max\{0,c-1\}$, we find that
\begin{align*}
  \frac{\tau}{2}\frac{d}{dt}\int_\Omega|(c-1)^+|^2 dx
  + \int_\Omega\big(\eta|\na(c-1)^+|^2 + |(c-1)^+|^2\big) dx
  = \int_\Omega([\bar\rho]-1)(c-1)^+ dx
  \le 0.
\end{align*}
Since $c(0)=c^0\le 1$, this shows that $(c-1)^+=0$ and hence $c\le 1$ in $\Omega_T$. Similarly, using the test function $c^-=\min\{0,c\}$, we obtain $c\ge 0$ in $\Omega_T$. Now, there exists a unique weak solution $\rho\in L^2(0,T;H^1(\Omega))\cap H^1(0,T;H^1(\Omega)')$ to the linear problem
\begin{equation}\label{2.rholin}
\begin{aligned}
  & \pa_t\rho = \eps\Delta\rho 
  + \sigma\diver\big([1-\bar\rho][\bar\rho]^{m-1}\na\rho 
  - \chi[\bar\rho][1-\bar\rho]\na c\big)\mbox{in }\Omega,\ t>0, \\
  & [1-\bar\rho][\bar\rho]^{m-1}\na\rho\cdot\nu = 0
  \quad\mbox{on }\pa\Omega, \quad
  \rho(0)=\sigma\rho_0\quad\mbox{in }\Omega.
\end{aligned}
\end{equation}

This defines the fixed-point operator $S:L^2(\Omega_T)\times[0,1]\to L^2(\Omega_T)$, $S(\bar\rho,\sigma)=\rho$. It holds that $S(\bar\rho,0)=0$. The continuity of $S$ follows from standard arguments. We deduce from the compactness of the embedding $L^2(0,T;H^1(\Omega))\cap H^1(0,T;H^1(\Omega)')\hookrightarrow C^0([0,T];L^2(\Omega))$ that $S$ is  also compact.

It remains to derive a uniform bound for all fixed points of $S(\cdot,\sigma)$ for $\sigma>0$. Let $\rho$ be such a fixed point. First, with the test function $c$ in \eqref{2.clin} with $\bar\rho=\rho$,
\begin{align}\label{2.cc}
  \frac{\tau}{2}\frac{d}{dt}\int_\Omega c^2 dx
  + \int_\Omega(\eta|\na c|^2 + c^2)dx
  = \int_\Omega[\rho]cdx \le \mbox{meas}(\Omega),
\end{align}
which yields a uniform bound for $c$ in $L^2(0,T;H^1(\Omega))$. Then we use the test function $\rho$ in the weak formulation to \eqref{2.rholin} with $\bar\rho=\rho$:
\begin{align*}
  \frac12\frac{d}{dt}\int_\Omega\rho^2 dx
  + \int_\Omega(\eps + \sigma[1-\rho][\rho]^{m-1})|\na\rho|^2 dx
  &= \chi\sigma\int_\Omega[\rho][1-\rho]\na c\cdot\na\rho dx \\
  &\le \frac{\eps}{2}\int_\Omega|\na\rho|^2 dx
  + \frac{\chi^2}{2\eps}\int_\Omega|\na c|^2 dx.
\end{align*}
We conclude a uniform bound for $\rho$ in $L^\infty(0,T;L^2(\Omega))\cap L^2(0,T;H^1(\Omega))$ (not uniform in $\eps$). This provides the required bound for $\rho$ in $L^2(\Omega_T)$, and we infer the existence of a fixed point $\rho_\eps$ of $S(\cdot,1)$, which is a weak solution to \eqref{1.bic}, \eqref{2.rholin} with $\bar\rho=\rho_\eps$ and $\sigma=1$. Accordingly, let $c_\eps$ be the solution to \eqref{1.bic}, \eqref{2.clin}. 

We claim that $0\le\rho_\varepsilon\le 1$ in $\Omega_T$. Indeed, we use the test function $(\rho_\eps-1)^+$ in the weak formulation to \eqref{2.rholin} with $\bar\rho=\rho_\eps$ and $\sigma=1$:
\begin{align*}
  \frac12\frac{d}{dt}\int_\Omega|(\rho_\eps-1)^+|^2 dx
  &+ \eps\int_\Omega|\na(\rho_\eps)^+|^2 dx
  = -\int_\Omega[1-\rho_\eps][\rho_\eps]^{m-1}
  \mathrm{1}_{\{\rho_\eps>1\}}|\na\rho_\eps|^2 dx \\
  &+ \chi\int_\Omega[\rho_\eps][1-\rho_\eps]\mathrm{1}_{\{\rho_\eps>1\}}
  \na c_\eps\cdot\na\rho_\eps dx = 0,
\end{align*}
since $[1-\rho_\eps]\mathrm{1}_{\{\rho_\eps>1\}}=0$. Hence $\rho_\eps\le 1$ in $\Omega_T$. Similarly, we show that $\rho_\eps\ge 0$ in $\Omega_T$. Thus, $(\rho_\eps,c_\eps)$ solves equations \eqref{2.rho}--\eqref{2.c}.

\subsection{Uniform bounds}

We derive bounds for $\rho_\eps$ and $c_\eps$ uniform in $\eps$. We know from \eqref{2.cc} that $(c_\eps)$ is bounded in $L^2(0,T;H^1(\Omega))\cap H^1(0,T;H^1(\Omega)')$. 
Now, we use the function $\eps\rho_\eps$ in the weak formulation of \eqref{2.rho}:
\begin{align*}
  \frac{\eps}{2}\frac{d}{dt}&\int_\Omega \rho_\eps^2 dx
  + \eps^2\int_\Omega|\na\rho_\eps|^2 dx
  + \eps\int_\Omega\rho_\eps(1-\rho_\eps)\rho_\eps^{m-1}
  |\na\rho_\eps|^2 dx \\
  &= \chi\eps\int_\Omega\rho_\eps(1-\rho_\eps)\na c_\eps
  \cdot\na\rho_\eps dx 
  \le \frac{\eps^2}{2}\int_\Omega|\na\rho_\eps|^2 dx
  + \frac{\chi^2}{2}\int_\Omega|\na c_\eps|^2 dx.
\end{align*}
We infer that, for a constant $C>0$ independent of $\eps$,
\begin{align}\label{2.epsrho}
  \eps\|\na\rho_\eps\|_{L^2(\Omega_T)} \le C.
\end{align} 
To derive a uniform gradient bound involving $\rho_\eps$, we introduce the function
\begin{align*}
  \phi_\eps(z) = \int_{1/2}^{z}
  \bigg(\frac{\eps}{s(1-s)} + s^{m-2}\bigg)ds
  = \eps\log\frac{z}{1-z} + \frac{z^{m-1}}{m-1} 
  - \frac{1}{m-1}\bigg(\frac12\bigg)^{m-1}
\end{align*}
and set $\Phi_\eps(z)=\int_0^z\phi_\eps(s)ds$ for $0<z<1$. Equation \eqref{2.rho} can be written equivalently as
\begin{align}\label{2.rho2}
  \pa_t\rho_\eps = \diver\big(\rho_\eps(1-\rho_\eps)
  \na(\phi_\eps(\rho_\eps)
  -\chi c_\eps)\big)\quad\mbox{in }\Omega,\ t>0.
\end{align}
Since $\phi(\rho_\eps)$ is not an admissible test function, we need to regularize:
\begin{align*}
  \phi_\eps^\delta(z) = \int_{1/2}^{z}
  \bigg(\frac{\eps}{(s+\delta)(1-s+\delta)}
  + (s+\delta)^{m-2}\bigg)ds, \quad
  \Phi_\eps^\delta(z) = \int_0^{z}\phi_\eps^\delta(s)ds,
\end{align*}
where now, we can choose $0\le z\le 1$. Then $\phi_\eps^\delta(\rho_\eps)$ is an admissible test function in \eqref{2.rho}:
\begin{align}\label{2.aux}
  \int_\Omega\Phi_\eps^\delta(\rho_\eps)dx\Big|_{0}^t
  &= -\int_0^t\int_\Omega\rho_\eps(1-\rho_\eps)
  \phi'_\eps(\rho_\eps)(\phi_\eps^\delta)'(\rho_\eps)
  |\na\rho_\eps|^2 dxds \\
  &\phantom{xx}+ \chi\int_0^t\int_\Omega\rho_\eps(1-\rho_\eps)
  (\phi_\eps^\delta)'(\rho_\eps)\na c_\eps\cdot\na\rho_\eps dxds \nonumber \\
  &= -\int_0^t\int_\Omega|\na A_\delta(\rho_\eps)|^2 dxds 
  + \chi\int_0^t\int_\Omega\na c_\eps\cdot\na B_\delta(\rho_\eps) 
  dxds, \nonumber 
\end{align}
where $A'_\delta(z) = \sqrt{z(1-z)\phi_\eps'(z)(\phi_\eps^\delta)'(z)}$ and $B'_\delta(z) = z(1-z)(\phi_\eps^\delta)'(z)$. Since $B'_\delta$ is bounded uniformly in $\delta$, $B_\delta(\rho_\eps)$ is bounded in $L^2(0,T;H^1(\Omega))$ uniformly in $\delta$. Hence, \eqref{2.aux} shows that $A_\delta(\rho_\eps)$ is bounded in $L^2(0,T;H^1(\Omega))$ as well. By compactness, we can extract a subsequence (not relabeled) such that, $A_\delta(\rho_\eps)$ and $B_\delta(\rho_\eps)$ converge weakly in $L^2(\Omega_T)$ as $\delta\to 0$. By dominated convergence, using $A_\delta(z)\to A_0(z)$, $B_\delta(z)\to B_0(z)$ a.e. for $z\in(0,1)$, we can identify the limits. We conclude that
\begin{align*}
  A_\delta(\rho_\eps)\to A_0(\rho_\eps), \quad
  B_\delta(\rho_\eps)\to B_0(\rho_\eps) \quad
  \mbox{strongly in }L^2(\Omega_T).
\end{align*}
Similarly, by dominated convergence, $\Phi_\eps^\delta(\rho_\eps)\to \Phi_\eps(\rho_\eps)$ in $L^1(\Omega)$.
Thus, we can pass to the limit $\delta\to 0$ in \eqref{2.aux} and, together with the weak lower semicontinuity of the quadratic terms, we obtain
\begin{align}\label{2.aux2}
  \int_\Omega\Phi_\eps(\rho_\eps)dx\Big|_0^t 
  + \int_0^t\int_\Omega|\na A_0(\rho_\eps)|^2 dxds
  &\le \chi\int_0^t\int_\Omega\na c_\eps\cdot\na B_0(\rho_\eps)dxds \\
  &= \chi\int_0^t\int_\Omega\rho_\eps(1-\rho_\eps)
  \na c_\eps\cdot\na\phi(\rho_\eps) dxds. \nonumber 
\end{align}
Using the test function $-\chi c_\eps$ in the weak formulation of \eqref{2.rho2}, we have
\begin{align*}
  -\chi\int_0^t\int_\Omega\rho_\eps(1-\rho_\eps)\na(\phi_\eps(\rho_\eps)
  -\chi c_\eps)\cdot\na c_\eps dxds
  = \chi\int_0^t\langle \pa_t\rho_\eps,c_\eps\rangle ds.
\end{align*}
Then we add this equation to \eqref{2.aux2}, and observing that $|\na A_0(\rho_\eps)|^2 = \rho_\eps(1-\rho_\eps)|\na\phi_\eps(\rho_\eps)|^2$, we arrive at
\begin{align*}
  \int_\Omega\Phi(\rho_\eps)dx\Big|_0^t
  + \int_0^t\int_\Omega\rho_\eps(1-\rho_\eps)
  |\na(\phi_\eps(\rho_\eps)-\chi c_\eps)|^2 dxds
  \le \chi\int_0^t\langle\pa_t\rho_\eps,c_\eps\rangle ds.
\end{align*}
We rewrite the right-hand side by replacing $\rho_\eps$ by $\tau\pa_t c_\eps - \eta\Delta c_\eps + c_\eps$:
\begin{align*}
  \langle\pa_t\rho_\eps,c_\eps\rangle
  &= \frac{d}{dt}\int_\Omega\rho_\eps c_\eps dx
  - \int_\Omega\rho_\eps\pa_t c_\eps dx \\
  &= \frac{d}{dt}\int_\Omega\rho_\eps c_\eps dx
  - \int_\Omega(\tau\pa_t c_\eps - \eta\Delta c_\eps + c_\eps)
  \pa_t c_\eps dx \\
  &= -\tau\int_\Omega|\pa_t c_\eps|^2 dx
  - \frac12\frac{d}{dt}\int_\Omega(\eta|\na c_\eps|^2 + c_\eps^2)dx.
\end{align*}
An integration over time leads to
\begin{align}\label{2.phic}
  \int_\Omega\bigg(\Phi(\rho_\eps) &+ \frac{\chi}{2}
  (\eta|\na c_\eps|^2 + c_\eps^2) - \chi\rho_\eps c_\eps\bigg)dx\Big|_0^t 
  + \chi\tau\int_0^t\int_\Omega|\pa_s c_\eps|^2dxds \\
  &+ \int_0^t\int_\Omega\rho_\eps(1-\rho_\eps)
  |\na(\phi_\eps(\rho_\eps)-\chi c_\eps)|^2 dxds \le 0. \nonumber 
\end{align}
Because of $\rho_\eps c_\eps\le 1$, this gives the estimate
\begin{align}\label{2.estphi}
  \int_0^t\int_\Omega\rho_\eps(1-\rho_\eps)
  |\na(\phi_\eps(\rho_\eps)|^2 dxds
  &\le 2\int_0^t\int_\Omega \rho_\eps(1-\rho_\eps)
  |\na(\phi_\eps(\rho_\eps)-\chi c_\eps)|^2 dxds \\
  &\phantom{xx}+ 2\chi^2\int_0^t\int_\Omega\rho_\eps(1-\rho_\eps)
  |\na c_\eps|^2 dxds \le C, \nonumber 
\end{align}
where we used bound \eqref{2.cc} in the last step. 

\subsection{Limit $\eps\to 0$}

Let $f(z)=z^m/m - z^{m+1}/(m+1)$ for $z\ge 0$. Then $(f(\rho_\eps))_\eps$ is bounded in $L^\infty(\Omega_T)$ and
\begin{align*}
  \|\na f(\rho_\eps)\|_{L^2(\Omega_T)}
  &= \|(1-\rho_\eps)\rho_\eps^{m-1}\na\rho_\eps\|_{L^2(\Omega_T)} \\
  &\le\bigg\|\rho_\eps(1-\rho_\eps)\bigg(
  \frac{\eps}{\rho_\eps(1-\rho_\eps)} + \rho_\eps^{m-2}\bigg)
  \na\rho_\eps\bigg\|_{L^2(\Omega_T)} 
  + \eps\|\na\rho_\eps\|_{L^2(\Omega_T)} \\
  &= \|\rho_\eps(1-\rho_\eps)\na\phi_\eps(\rho_\eps)\|_{L^2(\Omega_T)}
  + \eps\|\na\rho_\eps\|_{L^2(\Omega_T)} \le C,
\end{align*}
where the last step follows from \eqref{2.epsrho} and \eqref{2.estphi}. Furthermore, by \eqref{2.rho2} and \eqref{2.phic},
\begin{align*}
  \|\pa_t\rho_\eps\|_{L^2(0,T;H^1(\Omega)')}
  \le \|\rho_\eps(1-\rho_\eps)\na(\phi_\eps(\rho_\eps)
  - \chi c_\eps)\|_{L^2(\Omega_T)} \le C.
\end{align*}
The function $f$ and the family $(\rho_\eps)$ satisfy the conditions of the Aubin--Lions lemma of \cite[Theorem 1]{Mou16}, which allows us to extract a subsequence that is not relabeled such that
\begin{align*}
  \rho_\eps\to \rho \quad\mbox{strongly in }L^2(\Omega_T)
  \quad\mbox{as }\eps\to 0.
\end{align*}
Thanks to the uniform $L^\infty(\Omega_T)$ bound of $\rho_\eps$, this convergence holds in $L^p(\Omega_T)$ for any $p<\infty$. In particular, up to a subsequence, $\rho_\eps\to\rho$ a.e.\ in $\Omega_T$. Thus, $f(\rho_\eps)\to f(\rho)$ a.e.\ in $\Omega_T$ and
\begin{align*}
  \na f(\rho_\eps)\rightharpoonup\na f(\rho)
  \quad\mbox{weakly in }L^2(\Omega_T).
\end{align*}
Moreover, $\pa_t\rho_\eps\rightharpoonup\pa_t\rho$ weakly in $L^2(0,T;H^1(\Omega)')$. The strong convergence of $(\rho_\eps)$ implies that $\eps\na\rho_\eps\to 0$ in the sense of distributions. Consequently,
\begin{align*}
  \rho_\eps(1-\rho_\eps)\na\phi_\eps(\rho_\eps)
  = \eps\na\rho_\eps + \na f(\rho_\eps)\to \na f(\rho)
  \quad\mbox{in the sense of distributions}.
\end{align*}
Thanks to the bounds for $(c_\eps)$ from \eqref{2.cc}, we can apply the Aubin--Lions lemma to infer that (up to a subsequence)
\begin{align*}
  c_\eps\to c\quad\mbox{strongly in }L^2(\Omega_T).
\end{align*}
Moreover, $\na c_\eps\rightharpoonup\na c$ weakly in $L^2(\Omega_T)$ and $\pa_t c_\eps\rightharpoonup\pa_t c$ weakly in $L^2(0,T;H^1(\Omega)')$. Hence,
\begin{align*}
  \rho_\eps(1-\rho_\eps)\na c_\eps \rightharpoonup
  \rho(1-\rho)\na c\quad\mbox{weakly in }L^2(\Omega_T).
\end{align*}
Thus, we can pass to the limit in the weak formulation of \eqref{2.rho}--\eqref{2.c} to find that
\begin{align*}
  \int_0^T\langle\pa_t\rho,\psi\rangle dt
  + \int_0^T\int_\Omega\rho(1-\rho)\na(\phi(\rho)-\chi c)
  \cdot\na\psi dxdt &= 0, \\
  \tau\int_0^T\langle\pa_t c,\psi\rangle dt
  + \int_0^T\int_\Omega(\eta\na c\cdot\na\psi + c\psi - \rho\psi)dxdt
  &= 0
\end{align*}
for all $\psi\in C_0^\infty(\Omega_T)$. By density, the weak formulations also hold for all $\psi\in L^2(0,T;H^1(\Omega))$. 
Since $c^0\in H^1(\Omega)$, elliptic regularity implies that $\Delta c$, $\pa_t c\in L^2(\Omega_T)$. 


\section{Weak--strong uniqueness of solutions}\label{sec.wsu}

In this section, we prove Theorem \ref{thm.wsu}. Introducing the entropy density 
\begin{align*}
  h(\rho,c) = \rho(\log\rho-1) + (1-\rho)(\log(1-\rho)-1) 
  + \frac{\lambda}{2} c^2,
\end{align*}
where $\lambda>0$ will be determined later, the relative entropy is computed according to 
\begin{align}\label{3.relent}
  H(\rho,\bar\rho|c,\bar{c}) &= \int_\Omega\big(h(\rho,c)
  - h(\bar\rho,\bar{c}) - h'(\bar\rho,\bar{c})\cdot
  (\rho-\bar\rho,c-\bar{c})^T\big)dx \\
  &= \int_\Omega\bigg(\rho\log\frac{\rho}{\bar\rho}
  + (1-\rho)\log\frac{1-\rho}{1-\bar\rho}
  + \frac{\lambda}{2}(c-\bar{c})^2\bigg)dx. \nonumber 
\end{align}
We first show the associated entropy inequality for weak solutions and then prove the weak--strong uniqueness property. 

\begin{lemma}[Entropy inequality]
The weak solution constructed in Theorem \ref{thm.ex} satisfies for a.e.\ $0<t<T$,
\begin{align}\label{3.entineq}
  \int_\Omega h(\rho,c)dx\Big|_0^t
  &+ \int_0^t\int_\Omega\bigg(\rho^{m-2}|\na\rho|^2
  + \frac{\lambda}{\tau}(\eta|\na c|^2 + c^2)\bigg)dxds \\
  &\le \int_0^t\int_\Omega\bigg(\chi\na c\cdot\na\rho 
  + \frac{\lambda}{\tau}\rho c\bigg)dxds. \nonumber 
\end{align}
\end{lemma}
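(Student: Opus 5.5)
We prove the entropy inequality at the level of the approximate solutions $(\rho_\eps,c_\eps)$ of \eqref{2.rho}--\eqref{2.c} and then let $\eps\to 0$, using the convergences from Section \ref{sec.ex} together with weak lower semicontinuity. The density $h_0(\rho)=\rho(\log\rho-1)+(1-\rho)(\log(1-\rho)-1)$ has derivative $h_0'(\rho)=\log(\rho/(1-\rho))$, which is singular at $\rho=0,1$. Since $\rho_\eps$ may touch $0$ or $1$, we repeat the $\delta$-regularization of Section \ref{sec.ex}. We take as test function in \eqref{2.rho}
\begin{align*}
  \psi_\delta(\rho_\eps)=\log\frac{\rho_\eps+\delta}{1-\rho_\eps+\delta}.
\end{align*}
The flux $\big(\eps+(1-\rho_\eps)\rho_\eps^{m-1}\big)\na\rho_\eps-\chi\rho_\eps(1-\rho_\eps)\na c_\eps$ is multiplied by
\begin{align*}
  \psi_\delta'(\rho_\eps)\na\rho_\eps=\frac{1+2\delta}{(\rho_\eps+\delta)(1-\rho_\eps+\delta)}\na\rho_\eps .
\end{align*}
This produces three terms.
\begin{itemize}
\item The $\eps$-term is nonnegative and can be dropped.
\item The diffusion term is $(1-\rho_\eps)\rho_\eps^{m-1}\psi_\delta'|\na\rho_\eps|^2\ge0$. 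It converges monotonically, via Fatou, to $\rho_\eps^{m-2}|\na\rho_\eps|^2$ as $\delta\to0$.
\item The drift term is $\chi\rho_\eps(1-\rho_\eps)\psi'_\delta\na c_\eps\cdot\na\rho_\eps$. Its coefficient is bounded by $C$ uniformly in $\delta$ and converges pointwise to $\chi$, so it equals $\chi\na c_\eps\cdot\na G_\delta(\rho_\eps)$ with $G_\delta'$ uniformly bounded and $G_\delta(\rho_\eps)\to\rho_\eps$ in $L^2(0,T;H^1)$ by dominated convergence.
\end{itemize}
The time derivative gives $\int_\Omega H_\delta(\rho_\eps)dx|_0^t$ with $H_\delta'=\psi_\delta$, and $H_\delta\to h_0$ uniformly on $[0,1]$. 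In the limit $\delta\to0$ we therefore obtain, for each fixed $\eps$,
\begin{align*}
  \int_\Omega h_0(\rho_\eps)dx\Big|_0^t+\int_0^t\int_\Omega\rho_\eps^{m-2}|\na\rho_\eps|^2dxds\le\chi\int_0^t\int_\Omega\na c_\eps\cdot\na\rho_\eps dxds.
\end{align*}

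For the $c$-part we test \eqref{2.c} with $(\lambda/\tau)c_\eps$. This is legitimate since $c_\eps\in L^2(0,T;H^1)$ and $\pa_tc_\eps\in L^2(0,T;H^1{}')$, and it yields the identity
\begin{align*}
  \frac{\lambda}{2}\frac{d}{dt}\int_\Omega c_\eps^2dx+\frac{\lambda}{\tau}\int_\Omega\big(\eta|\na c_\eps|^2+c_\eps^2\big)dx=\frac{\lambda}{\tau}\int_\Omega\rho_\eps c_\eps dx.
\end{align*}
Adding this identity to the previous inequality gives \eqref{3.entineq} for $(\rho_\eps,c_\eps)$.

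The main obstacle is the passage $\eps\to0$ on the right-hand side, specifically the term $\int\na c_\eps\cdot\na\rho_\eps$. We only know that $\na f(\rho_\eps)$ is bounded, not $\na\rho_\eps$ itself, and for $m>1$ the quantity $\na\rho_\eps$ need not be bounded near $\rho=0$. To handle this, we first integrate by parts to rewrite the term as $-\chi\int\Delta c_\eps\,\rho_\eps$. Here we use that $\Delta c_\eps$ is bounded in $L^2(\Omega_T)$ uniformly in $\eps$: this follows from testing \eqref{2.c} with $-\Delta c_\eps$, since $\rho_\eps\le1$ and $c^0\in H^1$. Hence $\Delta c_\eps\rightharpoonup\Delta c$ weakly in $L^2$, while $\rho_\eps\to\rho$ strongly. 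The term $\frac{\lambda}{\tau}\int\rho_\eps c_\eps$ passes to the limit by the strong convergences. Interpreting $\int\na c\cdot\na\rho$ in the limit as $-\int\Delta c\,\rho$ is consistent with the statement.

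On the left-hand side we proceed as follows.
\begin{itemize}
\item The terms $\int h(\rho_\eps,c_\eps)(t)$ converge for a.e.\ $t$, by strong $L^2$ convergence along a subsequence, continuity of $h$ on $[0,1]$, and $c_\eps(t)\to c(t)$ for a.e.\ $t$.
\item The term $\iint(\eta|\na c|^2+c^2)$ is weakly lower semicontinuous.
\item For $\iint\rho_\eps^{m-2}|\na\rho_\eps|^2=\frac{4}{m^2}\iint|\na\rho_\eps^{m/2}|^2$, this rewriting gives a uniform $L^2(0,T;H^1)$ bound on $\rho_\eps^{m/2}$. Since $\rho_\eps^{m/2}\to\rho^{m/2}$ strongly, the gradients converge weakly and lower semicontinuity applies.
\end{itemize}
This yields \eqref{3.entineq} for a.e.\ $t$.
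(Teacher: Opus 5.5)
Your proof is correct, but it takes a different route from the paper's in the order of limits and in how the cross term is handled.

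\textbf{The paper's route.} The paper sends $(\delta,\varepsilon)\to 0$ jointly. It bounds $\chi\nabla c_\varepsilon\cdot\nabla C_\delta(\rho_\varepsilon)$ by Young's inequality, absorbs half of $|\nabla D_\delta(\rho_\varepsilon)|^2$, and uses $C_\delta'\le\sqrt{3}\,D_\delta'$. Both steps rely on $C_\delta'(z)z^{2-m}\le 3$, hence on $1\le m\le 2$. It then passes to the limit in $\iint\nabla c_\varepsilon\cdot\nabla C_\delta(\rho_\varepsilon)$. As written, this is a product of two weakly convergent sequences, so it tacitly needs strong convergence of $\nabla c_\varepsilon$. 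That convergence is obtainable from the uniform $L^2$ bounds on $\Delta c_\varepsilon$ and $\partial_t c_\varepsilon$ via Aubin--Lions, but the paper does not spell it out.

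\textbf{Your route.} You remove $\delta$ first at fixed $\varepsilon$, where $\rho_\varepsilon\in L^2(0,T;H^1(\Omega))$ makes dominated convergence and Fatou straightforward with no restriction on $m$. You then move the derivative onto $c_\varepsilon$, writing the cross term as $-\chi\iint\rho_\varepsilon\,\Delta c_\varepsilon$. This needs only weak convergence of $\Delta c_\varepsilon$ against strong convergence of $\rho_\varepsilon$, and the same bound on the right-hand side gives the uniform bound on $\nabla\rho_\varepsilon^{m/2}$.

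\textbf{What each buys.} Your route gives an inequality valid for every $m>1$, with dissipation $\frac{4}{m^2}|\nabla\rho^{m/2}|^2$ and cross term $-\chi\iint\rho\,\Delta c$. This is relevant because \eqref{3.claim} is later invoked for all $m>1$ in Lemma \ref{lem.entD}. The paper's route produces $\nabla\rho\in L^2$ and the literal $\nabla c\cdot\nabla\rho$ form directly, but only for $m\le 2$.

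\textbf{A line you should add.} To match the stated form literally, treat $1<m\le 2$ explicitly. There $\rho=(\rho^{m/2})^{2/m}$ with $2/m\ge 1$ and $\rho\le 1$, so $|\nabla\rho|^2\le\frac{4}{m^2}|\nabla\rho^{m/2}|^2$. Hence $\nabla\rho\in L^2(\Omega_T)$, and $-\iint\rho\,\Delta c=\iint\nabla c\cdot\nabla\rho$ by the Neumann condition. For $m>2$ the right-hand side as written need not be defined, and your version is the natural reading.
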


\begin{proof}
We show below that
\begin{align}\label{3.claim}
  \int_\Omega\big(\rho(\log\rho-1) &+ (1-\rho)(\log(1-\rho)-1)\big)
  dx\Big|_0^t + \int_0^t\int_\Omega \rho^{m-2}|\na\rho|^2 dxds \\
  &\le \chi\int_0^t\int_\Omega\na\rho\cdot\na c dxds. \nonumber 
\end{align}
Adding this inequality and the identity
\begin{align*}
  \frac{\lambda}{2}\int_\Omega c^2 dx\Big|_0^t
  &= \frac{\lambda}{2}\int_0^t\frac{d}{dt}\int_\Omega c^2 dxds
  = \lambda\int_0^t\int_\Omega c\pa_t c dxds \\
  &= -\frac{\lambda}{\tau}\int_0^t\int_\Omega(\eta|\na c|^2 
  + c^2 - c\rho)dxds
\end{align*}
then proves the lemma. 

We proceed with the proof of \eqref{3.claim}. Let $(\rho_\eps,c_\eps)$ be a solution to the approximate problem \eqref{1.bic}, \eqref{2.rho}--\eqref{2.c}. For some given $0<\delta<1$, we set 
\begin{align*}
  F_\delta(\rho_\eps) = (\rho_\eps+\delta)
  \big(\log(\rho_\eps+\delta)-1\big)
  + (1-\rho_\eps+\delta)\big(\log(1-\rho_\eps+\delta)-1\big).
\end{align*}
We use the test function $\log(\rho_\eps+\delta) - \log(1-\rho_\eps+\delta)$ in the weak formulation of \eqref{2.rho}:
\begin{align*}
  \int_\Omega F_\delta(\rho_\eps)dx\Big|_0^t
  &= -\int_0^t\int_\Omega\big(\eps+(1-\rho_\eps)\rho_\eps^{m-1}\big)
  \frac{1+2\delta}{(\rho_\eps+\delta)(1-\rho_\eps+\delta)}
  |\na\rho_\eps|^2 dxds \\
  &\phantom{xx}+ \chi\int_0^t\int_\Omega\rho_\eps(1-\rho_\eps)
  \frac{1+2\delta}{(\rho_\eps+\delta)(1-\rho_\eps+\delta)}
  \na c_\eps\cdot\na\rho_\eps dxds.
\end{align*}
We neglect the $\eps$-part in the first integral on the right-hand side and rewrite the other expressions by defining
\begin{align*}
  C_\delta(\rho_\eps) &= \int_0^{\rho_\eps}
  z(1-z)\frac{1+2\delta}{(z+\delta)(1-z+\delta)}dz, \\
  D_\delta(\rho_\eps) &= \int_0^{\rho_\eps}
  \bigg((1-z)z^{m-1}\frac{1+2\delta}{(z+\delta)(1-z+\delta)}
  \bigg)^{1/2}dz.
\end{align*}
This yields
\begin{align}\label{3.F}
  \int_\Omega F_\delta(\rho_\eps)dx\Big|_0^t
  \le -\int_0^t\int_\Omega|\na D_\delta(\rho_\eps)|^2 dxds
  + \chi\int_0^t\int_\Omega\na c_\eps\cdot\na C_\delta(\rho_\eps)
  dxds. 
\end{align}

It follows from $(D_\delta'(z))^2 = C'_\delta(z)z^{m-2}$ for $0\le z\le 1$ that $|\na D_\delta(\rho_\eps)|^2 = C'_\delta(\rho_\eps)\rho_\eps^{m-2}|\na\rho_\eps|^2$. We conclude from the condition $1\le m\le 2$ that 
\begin{align*}
  C'_\delta(\rho_\eps)\rho_\eps^{2-m}
  = (1+2\delta)\frac{\rho_\eps}{\rho_\eps+\delta}
  \frac{1-\rho_\eps}{1-\rho_\eps+\delta}\rho_\eps^{2-m}
  \le 3
\end{align*}
and consequently
\begin{align*}
  \chi\na c_\eps\cdot\na C_\delta(\rho_\eps)
  \le \frac12|\na D_\delta(\rho_\eps)|^2
  + \frac{\chi^2}{2}C'_\delta(\rho_\eps)\rho_\eps^{2-m}|\na c_\eps|^2
  \le \frac12|\na D_\delta(\rho_\eps)|^2 + \frac32\chi^2|\na c_\eps|^2.
\end{align*}
Then we can estimate \eqref{3.F} as
\begin{align*}
  \int_\Omega F_\delta(\rho_\eps)dx\Big|_0^t
  \le -\frac12\int_0^t\int_\Omega|\na D_\delta(\rho_\eps)|^2 dxds 
  + \frac32\chi^2\int_0^t\int_\Omega|\na c_\eps|^2 dxds.
\end{align*}
We know from the proof of Theorem \ref{thm.ex} that $(c_\eps)$ is bounded in $L^2(0,T;H^1(\Omega))$ and, moreover, that $F_\delta(\rho_\eps)$ is uniformly bounded in $(\delta,\eps)$. Then the previous inequality shows that $\na D_\delta(\rho_\eps)$ is bounded uniformly in $(\delta,\eps)$. We deduce from $1<m\le 2$ that $C'_\delta(\rho_\eps)\le \sqrt{1+2\delta}D'_\delta(\rho_\eps)\le \sqrt{3}D'_\delta(\rho_\eps)$, which yields the uniform bound
\begin{align*}
  \int_0^t\int_\Omega|\na C_\delta(\rho_\eps)|^2 dxds
  \le 3\int_0^t\int_\Omega|\na D'_\delta(\rho_\eps)|^2 dxds \le C.
\end{align*}

Since $\rho_\eps\to \rho$ pointwise a.e.\ in $\Omega_T$ and both $C_\delta$, $D_\delta$ are continuous and bounded uniformly in $\delta$, we can apply the dominated convergence theorem to conclude that
\begin{align*}
  C_\delta(\rho_\eps)\to \rho, \quad
  D_\delta(\rho_\eps)\to \frac{2}{m}\rho^{m/2}
  \quad\mbox{pointwise a.e. as }(\delta,\eps)\to 0.
\end{align*}
Thus, it follows from the uniform bounds for $\na C_\delta(\rho_\eps)$ and $\na D_\delta(\rho_\eps)$ in $L^2(\Omega_T)$ that
\begin{align*}
  \na C_\delta(\rho_\eps)\rightharpoonup \na\rho, \quad
  \na D_\delta(\rho_\eps)\rightharpoonup \frac{2}{m}\na\rho^{m/2}
  \quad\mbox{weakly in }L^2(\Omega_T).
\end{align*}
Passing to the limit $(\delta,\eps)\to 0$ in \eqref{3.F} and applying weakly lower semicontinuity, we have shown \eqref{3.claim}, which finishes the proof.
\end{proof}

\begin{lemma}[Gradient regularity for the density]
Let $1< m\le 2$. Then there exists $C>0$ only depending on the initial data such that
\begin{align*}
  \|\rho\|_{L^2(0,T;H^1(\Omega))}\le C.
\end{align*}
\end{lemma}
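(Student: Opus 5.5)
The bound follows from the entropy inequality \eqref{3.claim} proved in the previous lemma, using that the dissipation weight $\rho^{m-2}$ is bounded below when $1<m\le2$. Since $0\le\rho\le1$ a.e., we have $\rho^{m-2}\ge 1$ on $\{\rho>0\}$ for $m\le 2$. The $L^2(\Omega_T)$ part of the norm is immediate from $0\le\rho\le1$: $\|\rho\|_{L^2(\Omega_T)}^2\le T\,\mbox{meas}(\Omega)$. It therefore remains to bound $\int_0^T\int_\Omega|\na\rho|^2dxdt$.

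Rather than working directly with $\rho^{m-2}|\na\rho|^2$ in the limit, I would return to the approximate level in the proof of \eqref{3.claim}. There, after Young's inequality, we had
\begin{align*}
  \int_\Omega F_\delta(\rho_\eps)dx\Big|_0^t
  + \frac12\int_0^t\int_\Omega|\na D_\delta(\rho_\eps)|^2dxds
  \le \frac32\chi^2\int_0^t\int_\Omega|\na c_\eps|^2dxds .
\end{align*}
Since $F_\delta$ is bounded uniformly on $[0,1]$ and $c_\eps$ is bounded in $L^2(0,T;H^1(\Omega))$ by \eqref{2.cc}, the gradient $\na D_\delta(\rho_\eps)$ is bounded in $L^2(\Omega_T)$ uniformly in $(\delta,\eps)$. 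The key pointwise observation is
\begin{align*}
  (D'_\delta(z))^2 = (1+2\delta)\frac{z}{z+\delta}\frac{1-z}{1-z+\delta}z^{m-2}.
\end{align*}
For $m\le2$ the factor $z^{m-2}\ge1$, but the remaining factors degenerate at $z=0$ and $z=1$. So the gradient of $\rho$ itself is not directly controlled near the endpoints; the natural bound at the approximate level is on $C_\delta(\rho_\eps)$.

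This is the main obstacle, and the previous lemma already supplies the tool. There we derived $C'_\delta\le\sqrt3\,D'_\delta$, and hence $\|\na C_\delta(\rho_\eps)\|_{L^2(\Omega_T)}\le C$ uniformly in $(\delta,\eps)$. Since $C_\delta(\rho_\eps)\to\rho$ a.e. and boundedly, we get $\na C_\delta(\rho_\eps)\rightharpoonup\na\rho$ weakly in $L^2(\Omega_T)$. Weak lower semicontinuity then gives
\begin{align*}
  \|\na\rho\|_{L^2(\Omega_T)}\le\liminf\|\na C_\delta(\rho_\eps)\|_{L^2(\Omega_T)}\le C .
\end{align*}

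The step I would check carefully is the inequality $C'_\delta\le\sqrt3 D'_\delta$, which is equivalent to $C'_\delta\le 3z^{m-2}$. Writing $C'_\delta = (1+2\delta)\frac{z}{z+\delta}\frac{1-z}{1-z+\delta}\le 1+2\delta\le3$ and using $z^{m-2}\ge1$ for $z\le1$, $m\le2$, we get $C'_\delta\le 3z^{m-2}$, hence $(C'_\delta)^2\le 3C'_\delta z^{m-2} = 3(D'_\delta)^2$. The constant depends only on $\chi$, $T$, $\Omega$, and the bounds on $c^0$ through \eqref{2.cc}, together with the initial entropy, which is bounded since $0\le\rho^0\le1$.
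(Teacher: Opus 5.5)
Your argument is correct, but you extract the bound at a different stage than the paper does.

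The paper works with the limit inequality \eqref{3.entineq}. It uses $|\na\rho|^2\le\rho^{m-2}|\na\rho|^2$ (from $m\le2$ and $\rho\le1$), splits the cross term $\chi\na c\cdot\na\rho$ by Young's inequality, and absorbs $\frac12\int_0^t\int_\Omega|\na\rho|^2dxds$ into the left-hand side. It then bounds $\int_0^t\int_\Omega|\na c|^2dxds$ via \eqref{2.cc}.

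You instead stay at the approximate level. You use $C_\delta'\le\sqrt3\,D_\delta'$, and your verification of this inequality is fine. This gives a bound on $\na C_\delta(\rho_\eps)$ uniform in $(\delta,\eps)$. You then identify the weak limit as $\na\rho$ and conclude by weak lower semicontinuity, so no absorption is needed.

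The paper's route is shorter once \eqref{3.entineq} is available. It also shows that the bound follows from the entropy structure alone, so it would apply to any weak solution satisfying that inequality.

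Your route makes explicit that the estimate is a by-product of the proof of \eqref{3.claim}. It also sidesteps a tacit point in the paper's version. The absorption step requires knowing beforehand that $\na\rho\in L^2(\Omega_T)$. That finiteness is exactly what the $C_\delta$ bound provides, and it is also needed for the term $\chi\int_0^t\int_\Omega\na\rho\cdot\na c\,dxds$ in \eqref{3.claim} to make sense. The price of your approach is that it is tied to the approximation scheme. This is harmless here, since the lemma concerns the solution constructed in Theorem \ref{thm.ex}.

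A minor remark: through \eqref{2.cc}, your constant also depends on $\eta$ and $\tau$.
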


\begin{proof}
The condition $m\le 2$ and the property $\rho\le 1$ imply that $|\na\rho|^2\le\rho^{m-2}|\na\rho|^2$. Hence, we infer from the entropy inequality \eqref{3.entineq} and $\rho$, $c\le 1$ that
\begin{align*}
  \int_\Omega& h(\rho,c)dx\Big|_0^t + \int_0^t\int_\Omega
  \bigg(|\na\rho|^2 + \frac{\lambda}{\tau}
  (\eta|\na c|^2 + c^2)\bigg)dxds \\
  &\le \frac{1}{2}\int_0^t\int_\Omega|\na\rho|^2 dxds
  + \frac{\chi^2}{2}\int_0^t\int_\Omega|\na c|^2 dxds.
\end{align*}
The first term on the right-hand side can be absorbed by the left-hand side, and we know from \eqref{2.cc} that the second term is bounded. 
\end{proof}

We are now ready to prove Theorem \ref{thm.wsu}. 

\begin{proof}[Proof of Theorem \ref{thm.wsu}]
We split the relative entropy \eqref{3.relent} into three parts:
\begin{align*}
  H(\rho,c|\bar\rho,\bar{c})
  &= \int_\Omega\big(\rho(\log\rho-1) + (1-\rho)(\log(1-\rho)-1)
  \big)dx \\
  &\phantom{xx}+ \int_\Omega\big(-\rho\log\bar{\rho}
  - (1-\rho)\log(1-\bar\rho) + 1\big)dx
  + \frac{\lambda}{2}\int_\Omega(c-\bar{c})^2 dx.
\end{align*}
Then
\begin{align*}
  H(\rho,c|\bar\rho,\bar{c})\Big|_0^t
  &= \int_0^t\frac{d}{ds}H(\rho,c|\bar\rho,\bar{c})ds 
  = \int_\Omega\big(\rho(\log\rho-1) + (1-\rho)(\log(1-\rho)-1)
  \big)dx\Big|_0^t \\
  &\phantom{xx}- \int_0^t\big\langle\pa_t\rho,
  \log\bar{\rho}-\log(1-\bar\rho)\big\rangle ds
  + \int_0^t\bigg\langle \pa_t\bar\rho,-\frac{\rho}{\bar\rho}
  + \frac{1-\rho}{1-\bar\rho}\bigg\rangle ds \\
  &\phantom{xx}+ \lambda\int_0^t\langle\pa_t(c-\bar{c}),
  c-\bar{c}\rangle ds.
\end{align*}
Observe that the expressions in the second and third integrals on the right-hand side are well defined thanks to the lower and upper bounds for $\bar\rho$. The expression in the third integral exists since $\rho\in L^2(0,T;H^1(\Omega))$. We use inequality \eqref{3.claim} for the first integral on the right-hand side, replace the gradients of $\rho$ according to
\begin{align*}
  \rho^{m-2}|\na\rho|^2 &= (1-\rho)\rho^{m-1}\na(\log\rho-\log(1-\rho))
  \cdot\na\rho \quad\mbox{and} \\
  \na\rho &= \rho(1-\rho)\na(\log\rho-\log(1-\rho)),
\end{align*}
and insert equations \eqref{1.rho} and \eqref{1.c} in the remaining integrals. Then a computation gives
\begin{align}
  H(\rho,c|\bar\rho,\bar{c})\Big|_0^t
  &= -\int_0^t\int_\Omega\big((1-\rho)\rho^{m-1}\na\rho
  - \chi\rho(1-\rho)\na c\big)\cdot\na\bigg(\log\frac{\rho}{\bar\rho}
  - \log\frac{1-\rho}{1-\bar\rho}\bigg)dxds \nonumber \\
  &\phantom{xx}+ \int_0^t\int_\Omega
  \big((1-\bar\rho)\bar{\rho}^{m-1}\na\bar\rho
  - \chi\bar\rho(1-\bar\rho)\na \bar{c}\big)
  \cdot\na\bigg(\frac{\rho}{\bar\rho}
  - \frac{1-\rho}{1-\bar\rho}\bigg)dxds \label{3.H} \\
  &\phantom{xx}- \frac{\lambda}{\tau}\int_0^t\int_\Omega\big(
  \eta|\na(c-\bar{c})|^2 + (c-\bar{c})^2 - (\rho-\bar\rho)(c-\bar{c})
  \big)dxds. \nonumber 
\end{align}
We abbreviate
\begin{align*}
  L := (1-\rho)\na\bigg(\log\frac{\rho}{1-\rho}
  - \log\frac{\bar\rho}{1-\bar\rho}\bigg).
\end{align*}
Then the identity
\begin{align*}
  \rho\na\log\frac{\rho}{\bar\rho} 
  + (1-\rho)\na\log\frac{1-\rho}{1-\bar\rho}
  = -(\rho-\bar\rho)\na\log\frac{\bar\rho}{1-\bar\rho}
\end{align*}
implies that
\begin{align*}
  \na\log\frac{\rho}{\bar\rho} = L + \rho\na\log\frac{\rho}{\bar\rho} 
  + (1-\rho)\na\log\frac{1-\rho}{1-\bar\rho}
  = L - (\rho-\bar\rho)\na\log\frac{\bar\rho}{1-\bar\rho}.
\end{align*}
Equation \eqref{3.H} becomes
\begin{align*}
  H&(\rho,c|\bar\rho,\bar{c})\Big|_0^t
  = -\int_0^t\int_\Omega\rho^m L\cdot\na\log\frac{\rho}{\bar\rho}
  dxds \\
  &- \int_0^t\int_\Omega\bigg\{\rho^m L 
  - (1-\bar\rho)\bar{\rho}^m
  \na\bigg(\frac{\rho}{\bar\rho} - \frac{1-\rho}{1-\bar\rho}\bigg)
  \bigg\}\cdot\na\log\bar\rho dxds \\
  &+ \chi\int_0^t\int_\Omega \rho L\cdot\na(c-\bar{c})dxds
  + \chi\int_0^t\int_\Omega\bigg\{\rho L - \bar\rho(1-\bar\rho)
  \na\bigg(\frac{\rho}{\bar\rho} - \frac{1-\rho}{1-\bar\rho}\bigg)
  \bigg\}\cdot\na\bar{c} dxds \\
  &- \frac{\lambda}{\tau}\int_0^t\int_\Omega\big(\eta|\na(c-\bar{c})|^2
  + (c-\bar{c})^2 - (\rho-\bar\rho)(c-\bar{c})\big)dxds.
\end{align*}
A computation leads to
\begin{align}\label{3.H2} 
  & H(\rho,c|\bar\rho,\bar{c})\Big|_0^t
  = I_1 + \cdots + I_5, \quad\mbox{where} \\
  & I_1 = -\int_0^t\int_\Omega\bigg(\rho^m |L|^2 
  - \rho^m(\rho-\bar\rho) L\cdot\na\log\frac{\bar\rho}{1-\bar\rho}
  \bigg)dxds, \nonumber \\
  & I_2 = -\int_0^t\int_\Omega\bigg(\rho^m L - (1-\bar\rho)
  \bar{\rho}^{m-1}\rho\na\log\frac{\rho}{\bar\rho}
  + (1-\rho)\bar{\rho}^m\na\log\frac{1-\rho}{1-\bar\rho}\bigg)
  \cdot\na\log\bar\rho dxds, \nonumber \\
  & I_3 = \chi\int_0^t\int_\Omega\bigg(\rho L - (1-\bar\rho)
  \rho\na\log\frac{\rho}{\bar\rho} + (1-\rho)\bar\rho
  \na\log\frac{1-\rho}{1-\bar\rho}\bigg)\cdot\na\bar{c} dxds, 
  \nonumber \\
  & I_4 = -\frac{\lambda}{\tau}\int_0^t\int_\Omega
  \big(\eta|\na(c-\bar{c})|^2
  + (c-\bar{c})^2 - (\rho-\bar\rho)(c-\bar{c})\big)dxds, 
  \nonumber \\
  & I_5 = \chi\int_0^t\int_\Omega \rho L\cdot\na(c-\bar{c})dxds.
  \nonumber 
\end{align}
By Young's inequality with $\delta>0$,
\begin{align*}
  I_1 \le \int_0^t\int_\Omega(-\rho^m + \delta\rho^{2m})|L|^2 dxds
  + C(\delta,\bar\rho)\int_0^t\int_\Omega(\rho-\bar\rho)^2 dxds,
\end{align*}
where $C(\delta,\bar\rho)>0$ depends on the $L^\infty(\Omega_T)$ norm of $\na \bar\rho$ and the (positive) infimum of $\bar\rho$. Similarly, we estimate $I_4+I_5$:
\begin{align*}
  I_4+I_5 &\le \int_0^t\int_\Omega\bigg\{\delta\rho^2|L|^2
  + \bigg(C(\delta,\chi)-\frac{\lambda}{\tau}\eta\bigg)
  |\na(c-\bar{c})|^2 \\
  &\phantom{xx}
  - \frac{\lambda}{2\tau}(c-\bar{c})^2
  + \frac{\lambda}{2\tau}(\rho-\bar\rho)^2\bigg\}dxds.
\end{align*}
Inserting the definition of $L$, formulated as
\begin{align*}
  L = (1-\rho)\na\log\frac{\rho}{\bar\rho}
  - (1-\rho)\na\log\frac{1-\rho}{1-\bar\rho},
\end{align*}
and observing that
\begin{align*}
  \rho(\rho-\bar\rho)\na\log\frac{\rho}{\bar\rho}
  + (1-\rho)(\rho-\bar\rho)\na\log\frac{1-\rho}{1-\bar\rho}
  = -(\rho-\bar\rho)^2\na\log\frac{\bar\rho}{1-\bar\rho},
\end{align*}
we can reformulate $I_2$ according to
\begin{align*}
  I_2 &= -\int_0^t\int_\Omega(\rho^{m-1}-\bar{\rho}^{m-1})\rho L
  \cdot\na\log\bar\rho dxds \\
  &\phantom{xx}+ \int_0^t\int_\Omega\bar{\rho}^{m-1}
  \bigg(\rho(\rho-\bar\rho)\na\log\frac{\rho}{\bar\rho}
  + (1-\rho)(\rho-\bar\rho)\na\log\frac{1-\rho}{1-\bar\rho}\bigg)
  dxds \\
  &= -\int_0^t\int_\Omega(\rho^{m-1}-\bar{\rho}^{m-1})\rho L
  \cdot\na\log\bar\rho dxds \\
  &\phantom{xx}- \int_0^t\int_\Omega\bar{\rho}^{m-1}(\rho-\bar\rho)^2
  \na\log\frac{\bar\rho}{1-\bar\rho}\cdot\na\log\bar\rho dxds.
\end{align*}
Next, we exploit the fact that $1\le m\le 2$ to find that
\begin{align*}
  |\rho^{m-1}-\bar\rho^{m-1}| &= \frac{1}{m-1}\int_0^1
  (\theta\rho+(1-\theta)\bar\rho)^{m-2}d\theta|\rho-\bar\rho| \\
  &\le \frac{\bar{\rho}^{m-2}}{m-1}\int_0^1
  (1-\theta)^{m-2}d\theta|\rho-\bar\rho|
  \le C(\bar\rho)|\rho-\bar\rho|,
\end{align*}
where $C(\bar\rho)>0$ depends on the (positive) infimum of $\bar\rho$. Therefore, by the Cauchy--Schwarz inequality with $\delta>0$,
\begin{align*}
  I_2 &\le \delta\int_0^t\int_\Omega\rho^2|L|^2 dxds
  + C(\delta,\bar\rho)\int_0^t\int_\Omega
  |\rho^{m-1}-\bar{\rho}^{m-1}|^2 dxds
  + C(\bar\rho)\int_0^t\int_\Omega(\rho-\bar\rho)^2 dxds \\
  &\le \delta\int_0^t\int_\Omega\rho^2 |L|^2 dxds
  + C(\delta,\bar\rho)\int_0^t\int_\Omega|\rho-\bar{\rho}|^2 dxds,
\end{align*}
and the constants depend on the $L^\infty(\Omega_T)$ norm of $\na\bar\rho$ and the (positive) infimum of $\bar\rho$. Since the term $I_3$ equals $I_2$ with $m=1$, we can estimate this expression in a similar way. Combining the estimates, we conclude from \eqref{3.H2} that
\begin{align*}
  H&(\rho,c|\bar\rho,\bar{c})\Big|_0^t
  \le \int_0^t\int_\Omega(-\rho^m + \delta\rho^{2m} + 2\delta\rho^2)
  |L|^2 dxds \\
  &+ \int_0^t\int_\Omega\bigg(C(\delta,\chi)-\frac{\lambda}{\tau}\bigg)
  |\na(c-\bar{c})|^2 dxds
  + C(\delta,\bar\rho,\lambda/\tau)\int_0^t\int_\Omega(\rho-\bar\rho)^2 dxds.
\end{align*}
It follows from $m\le 2$ and $\rho\le 1$ that
\begin{align*}
  -\rho^m + \delta\rho^{2m} + 2\delta\rho^2
  \le -\rho^m + \delta\rho^m + 2\delta\rho^m
  = (-1+3\delta)\rho^m\le 0,
\end{align*}
if we choose $\delta\le 1/3$. Then taking $\lambda\ge \tau C(\delta,\chi)$, we find that
\begin{align*}
  H(\rho,c|\bar\rho,\bar{c})\Big|_0^t
  \le C(\delta,\bar\rho,\lambda/\tau)
  \int_0^t\int_\Omega(\rho-\bar\rho)^2 dxds.
\end{align*}
If the following lower bound for the relative entropy
\begin{align}\label{3.lower}
  H(\rho,c\bar\rho,\bar{c}) \ge 2\int_\Omega(\rho-\bar\rho)^2 dx
\end{align}
holds, we can apply Gronwall's inequality to conclude that $H((\rho,c)(t)|(\bar\rho,\bar{c})(t))=0$ for all $t\ge 0$, since the initial data are the same. This shows that $\rho(t)=\bar\rho(t)$ and $c(t)=\bar{c}(t)$ for $t\ge 0$. It remains to verify \eqref{3.lower}. Let $h_0(s) = s(\log s-1) + (1-s)(\log(1-s)-1)$. Then $h''(s)=1/(s(1-s))\ge 4$ and, by Taylor's expansion, for some $\xi$ between $\rho$ and $\bar\rho$,
\begin{align*}
  H(\rho,c|\bar\rho,\bar{c})
  &= \int_\Omega\big(h_0(\rho) - h_0(\bar\rho) 
  - h'_0(\bar\rho)(\rho-\bar\rho)\big)dx 
  + \frac{\lambda}{2}\int_\Omega(c-\bar{c})^2 dx \\
  &\ge \frac12\int_\Omega h_0''(\xi)(\rho-\bar\rho)^2dx 
  \ge 2\int_\Omega(\rho-\bar\rho)^2 dx,
\end{align*}
which shows the claim and completes the proof.
\end{proof}


\section{Steady states and pattern formation}\label{sec.steady}

We consider in this section stationary solutions. We show that they are unique if $1<m\le 2$ (and $\chi\le 1$) and that they are not unique if $m>2$. In the latter situation, there exist increasing solutions, which indicates pattern formation. To simplify the notation, we set $\tau=1$ and $\eta=1$. 

\subsection{Uniqueness of the steady state for $1<m\le 2$}

We define a steady state $(\rho,c)$ to \eqref{1.rho}--\eqref{1.bic} as a solution to
\begin{align}\label{4.srho}
  (1-\rho)\rho^{m-1}\na\rho = \chi\rho(1-\rho)\na c, \quad
  \Delta c-c+\rho = 0\quad\mbox{in }\Omega,
\end{align}
with the boundary conditions $(1-\rho)\rho^{m-1}\na\rho\cdot\nu = \na c\cdot\nu = 0$ on $\pa\Omega$. Let
\begin{align*}
  \varphi(\rho) = \frac{\rho^{m-1}}{m-1} \quad\mbox{for }\rho>0.
\end{align*}
Then, if $0<\rho<1$ in $\Omega$, we can divide \eqref{4.srho} by $\rho(1-\rho)$, yielding $\na\varphi(\rho) = \rho^{m-2}\na\rho = \chi\na c$ or $\varphi(\rho) = \chi(c+\lambda)$ for some constant $\lambda\in\R$. Therefore, any nonnegative steady state $(\rho,c)$ with vanishing flux and the property $0<\rho<1$ solves
\begin{align}
  -\Delta c + c = \rho := \varphi^{-1}(\chi(c+\lambda))
  \quad\mbox{in }\Omega, \quad \na c\cdot\nu = 0 \quad
  \mbox{on }\pa\Omega. \label{4.steady}
\end{align}
Because of $0<\rho=\varphi^{-1}(\chi(c+\lambda))<1$, the parameter $\lambda$ needs to satisfy the bounds $0<\chi(c+\lambda)<1/(m-1)$. Below we construct a solution $(c,\lambda)$ to this problem in one space dimension. Let $M:=\int_\Omega\rho dx$ be the total mass. We claim that, under certain assumptions, $(\rho,c)=(M/|\Omega|,M/|\Omega|)$ is the unique steady state. 

\begin{proposition}\label{prop.unique}
Let $1<m\le 2$ and $\chi\le 1$, or let $m>2$ and $\chi>0$ be sufficiently small. Then problem \eqref{4.steady} admits the unique solution $c=M/|\Omega|$. Consequently, for sufficiently small $\chi$, system \eqref{1.rho}--\eqref{1.bic} has the unique steady state $(\rho,c)=(M/|\Omega|,M/|\Omega|)$. 
\end{proposition}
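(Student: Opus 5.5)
The plan is to work directly with \eqref{4.steady}. Set $\rho=g(c):=\varphi^{-1}(\chi(c+\lambda))=((m-1)\chi(c+\lambda))^{1/(m-1)}$, so that $c$ solves $-\Delta c+c=g(c)$ with homogeneous Neumann data. Integrating over $\Omega$ and using $\na c\cdot\nu=0$ gives $\int_\Omega c\,dx=\int_\Omega\rho\,dx=M$. It therefore suffices to show that $c$ is constant: then $\rho=g(c)$ is constant, and the mass constraint forces $c=\rho=M/|\Omega|$.

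To prove that $c$ is constant, let $\bar c=M/|\Omega|$ be the mean and test the equation with $c-\bar c$. This yields
\begin{align*}
  \int_\Omega|\na c|^2dx+\int_\Omega(c-\bar c)^2dx=\int_\Omega\big(g(c)-g(\bar c)\big)(c-\bar c)\,dx ,
\end{align*}
where we may subtract $g(\bar c)$ because $\int_\Omega(c-\bar c)\,dx=0$. By the mean value theorem the right-hand side is at most $\sup g'\int_\Omega(c-\bar c)^2dx$, taken over the range of $c$. This gives uniqueness provided $\sup g'<1$, or $\le 1$ with a strictness argument. If $\sup g'$ is larger, I would add the Poincaré--Wirtinger inequality $\int_\Omega|\na c|^2dx\ge\mu_1\int_\Omega(c-\bar c)^2dx$, which improves the criterion to $\sup g'<1+\mu_1$.

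The main step is bounding $g'$. Differentiating $\varphi(\rho)=\chi(c+\lambda)$ gives $g'(c)=\chi/\varphi'(\rho)=\chi\rho^{2-m}$. For $1<m\le 2$ and $0<\rho<1$ this gives $g'\le\chi\rho^{2-m}<\chi\le 1$ wherever $\rho<1$. The inequality is strict on a set of positive measure unless $\rho\equiv1$, and that case is already constant. Hence the identity above forces $\int_\Omega(c-\bar c)^2dx=0$ when $\chi\le1$.

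For $m>2$ the map $\rho\mapsto\rho^{2-m}$ blows up as $\rho\to0$, so the uniform bound fails; this is the expected obstacle. Here I would exploit the a priori bounds. Since $0\le\rho\le1$, elliptic regularity and the maximum principle for $-\Delta c+c=\rho$ give $0\le c\le 1$. From $\varphi(\rho)=\chi(c+\lambda)$ we get $\max\varphi(\rho)-\min\varphi(\rho)=\chi(\max c-\min c)\le\chi$. Thus for small $\chi$ the function $\rho^{m-1}$ oscillates by at most $(m-1)\chi$. Combined with the mass constraint $\int\rho=M>0$, this keeps $\rho$ bounded below by roughly $(\bar\rho^{m-1}-(m-1)\chi)^{1/(m-1)}>0$. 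Consequently $g'=\chi\rho^{2-m}\le\chi\,\rho_{\min}^{2-m}<1$ for $\chi$ small (depending on $M,m,\Omega$), and the same energy identity concludes.

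The final statement about steady states of \eqref{1.rho}--\eqref{1.bic} then follows, since every steady state with $0<\rho<1$ solves \eqref{4.steady}.
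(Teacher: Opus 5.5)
Your argument for uniqueness within \eqref{4.steady} is correct and takes a different, somewhat cleaner route than the paper's. Testing with $c-\bar c$ and using $\int_\Omega(c-\bar c)\,dx=0$ handles all $\lambda$ at once. The paper instead tests with $(c-M)^+$, splits into $\lambda\le\lambda_M$ and $\lambda\ge\lambda_M$, and closes with mass conservation. Controlling the mean-value point through $g(\xi)\le\sup\rho\le 1$ (resp.\ $g(\xi)\ge\rho_{\min}$ for $m>2$) is the right mechanism. Your $m>2$ condition $\chi(\bar\rho^{m-1}-(m-1)\chi)^{(2-m)/(m-1)}<1$ is comparable to the paper's. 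One small correction: for $m=2$ you have $g'\equiv\chi$, so the claimed strict inequality $g'<\chi$ on $\{\rho<1\}$ is false. You do not need it, though. Your identity gives $\int_\Omega|\nabla c|^2dx\le(\chi-1)\int_\Omega(c-\bar c)^2dx\le 0$ whenever $\chi\le 1$, and $\Omega$ is connected.

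The genuine gap is your last sentence. A steady state of \eqref{1.rho}--\eqref{1.bic} is a solution of \eqref{4.srho} with only $0\le\rho\le 1$, and passing to \eqref{4.steady} requires dividing by $\rho(1-\rho)$. Without knowing $0<\rho<1$ everywhere, you only get that $\varphi(\rho)-\chi c$ is constant on each connected component of $\{0<\rho<1\}$, possibly with different constants. You also get no information on vacuum or saturated regions. Steady states with $\rho$ vanishing on part of $\Omega$ are exactly what slow diffusion can produce, so \eqref{4.steady} does not cover all steady states. Ruling them out is why the second sentence of the statement needs ``sufficiently small $\chi$'' even for $1<m\le 2$, and it is the second half of the paper's proof:
\begin{itemize}
\item $W^{2,p}$ regularity for $-\Delta c+c=\rho$ gives $\|\nabla c\|_{L^\infty(\Omega)}\le C$ independently of $\chi$.
\item The flux relation in \eqref{4.srho} (dividing only by $1-\rho$) yields $|\nabla\rho^m|\le m\chi|\nabla c|$ a.e., hence $\|\rho\|_{C^{0,1/m}(\overline\Omega)}\le C\chi^{1/m}$.
\item If $\rho(x_0)=0$, then $M=\int_\Omega|\rho(x)-\rho(x_0)|\,dx\le C\chi^{1/m}$. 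If $\rho(x_1)=1$, then $|\Omega|-M\le C\chi^{1/m}$. Both are impossible for small $\chi$.
\end{itemize}
Your oscillation argument for $m>2$ cannot replace this step. It presupposes a single global constant $\lambda$ and $\rho>0$ everywhere, which is precisely what has to be shown.
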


\begin{proof}
To simplify the computations, we set $|\Omega|=1$. Clearly, $(\rho,c)=(M,M)$ is a steady state to \eqref{4.steady}, i.e., there exists $\lambda_M$ such that
\begin{align*}
  \frac{M^{m-1}}{m-1} = \varphi(M) = \chi(M+\lambda_M).
\end{align*}
Let first $\lambda\le\lambda_M$ and let $(\rho,c)$ be a solution to \eqref{4.steady}. 
We prove $c\le M$, which requires the smallness condition on $\chi$. Indeed, choosing the test function $(c-M)^+=\max\{0,c-M\}$ in \eqref{4.steady} and applying the mean-value theorem, we obtain, because of $\lambda\le\lambda_M$,
\begin{align*}
  \int_\Omega|&\na(c-M)^+|^2 dx + \int_\Omega|(c-M)^+|^2 dx \\
  &\le \int_\Omega\big(\varphi^{-1}(\chi(c+\lambda_M))
  - \varphi^{-1}(\chi(M+\lambda_M))\big)(c-M)^+ dx \\
  &= \chi\int_\Omega(\varphi^{-1})'(\xi)(c-M)^2 dx
  = \chi\int_\Omega[(m-1)\xi]^{(2-m)/(m-1)}|(c-M)^+|^2 dx,
\end{align*}
where $\xi$ is between $\chi(c+\lambda_M)$ and $\chi(M+\lambda_M)$. 

Let $1<m\le 2$. By definition of $\lambda_M$, we have $(m-1)\xi\le (m-1)\chi(M+\lambda_M) = M^{m-1}\le 1$. This implies that
\begin{align*}
  \int_\Omega|\na(c-M)^+|^2 dx + \int_\Omega|(c-M)^+|^2 dx
  \le \chi\int_{\Omega}|(c-M)^+|^2 dx,
\end{align*}
and the right-hand side can be absorbed by the left-hand side if $\chi<1$. We conclude that $(c-M)^+=0$ and hence $c\le M$ in $\Omega$. 

Next, let $m>2$. Here, we need another smallness condition on $\chi$. Indeed, it follows from $(m-1)\xi\ge(m-1)\chi\lambda_M=M^{m-1}-(m-1)\chi M$ that
\begin{align*}
  \int_\Omega&|\na(c-M)^+|^2 dx + \int_\Omega|(c-M)^+|^2 dx \\
  &\le \frac{\chi}{(M^{m-1}-(m-1)\chi M)^{(m-2)/(m-1)}}
  \int_{\Omega}|(c-M)^+|^2 dx.
\end{align*}
Assuming that
\begin{align*}
  \chi < \frac{M^{m-2}}{m-1} \quad\mbox{and}\quad 
  \frac{\chi}{(M^{m-1}-(m-1)\chi M)^{(m-2)/(m-1)}} < 1,
\end{align*}
we find that $(c-M)^+=0$ and $c\le M$ in $\Omega$. Because of mass conservation, $M=\int_\Omega cdx\le M$, and we obtain $c=M$ a.e.\ in $\Omega$. The case $\lambda\ge \lambda_M$ is treated in a similar way.

To show that the steady state with vanishing flux to \eqref{1.rho}--\eqref{1.bic} is equivalent to problem \eqref{4.steady}, we verify that $0<\rho<1$ in $\Omega$, where $\rho:=\varphi^{-1}(\chi(c+\lambda))$. For this, we need sufficiently small values of $\chi>0$. Let $(c,\lambda)$ be a solution to \eqref{4.steady} such that $0\le\rho\le 1$ (we suppose that such a solution exists). By elliptic regularity,
\begin{align}\label{4.regul}
  \|\na c\|_{L^\infty(\Omega)} \le C\|c\|_{W^{2,p}(\Omega)}
  \le C\|\rho\|_{L^p(\Omega)} \le C \quad\mbox{for }p>d.
\end{align}
We apply the gradient to $\varphi(\rho)=\chi(c+\lambda)$ to find that $\rho^{m-1}\na\rho = \rho\na\varphi(\rho)=\chi\rho\na c$. It follows from \eqref{4.regul} and $0\le\rho\le 1$ that
\begin{align*}
  \|\na\rho^m\|_{L^\infty(\Omega)} 
  = m\|\chi\rho\na c\|_{L^\infty(\Omega)}
  \le m\chi\|\na c\|_{L^\infty(\Omega)} \le \chi C_L,
\end{align*}
where $C_L>0$ is independent of $\chi$. Hence, $\rho^m$ is Lipschitz continuous. Since $|a-b|^m\le C|a^m-b^m|$ for $m\ge 1$, we compute
\begin{align*}
  \|\rho\|_{C^{0,1/m}(\overline\Omega)}
  = \sup_{x_1\neq x_2}\frac{|\rho(x_1)-\rho(x_2)|}{|x_1-x_2|^{1/m}}
  \le C\sup_{x_1\neq x_2}\bigg(
  \frac{|\rho(x_1)^m-\rho(x_2)^m|}{|x_1-x_2|}\bigg)^{1/m}
  \le C(\chi C_L)^{1/m}. 
\end{align*}

Now, assume that $\rho(x_0)=0$ for some $x_0\in\Omega$. Then
\begin{align*}
  M &= \int_\Omega\rho(x)dx
  = \int_\Omega\frac{|\rho(x)-\rho(x_0)|}{|x-x_0|^{1/m}}
  |x-x_0|^{1/m}dx \\
  &\le \|\rho\|_{C^{0,1/m}(\overline\Omega)}
  \int_\Omega|x-x_0|^{1/m}dx \le C(\chi C_L)^{1/m}
\end{align*}
or $M^m\le C^m C_L\chi$. Thus, choosing $\chi < M^m/(C^mC_L)$, this leads to a contradiction. Therefore, $\rho>0$ in $\Omega$. Similarly, we can prove that $\rho<1$ in $\Omega$. Indeed, assume that $\rho(x_1)=1$ for some $x_1\in\Omega$. Then, using $|\Omega|=1$,
\begin{align*}
  1 - M &= \int_\Omega(1-\rho(x))dx
  = \int_\Omega\frac{|\rho(x_1)-\rho(x)|}{|x-x_0|^{1/m}}
  |x-x_0|^{1/m}dx \\
  &\le \|\rho\|_{C^{0,1/m}(\overline\Omega)}\int_\Omega|x-x_1|^{1/m}dx
  \le C(\chi C_L)^{1/m},
\end{align*}
which leads to $(1-M)^m\le C^m C_L\chi$, giving a contradiction if $\chi<(1-M)^m/(C^mC_L)$. We conclude that for sufficiently small $\chi$, the pair $(\rho,c)=(M,M)$ is the unique steady state to \eqref{1.rho}--\eqref{1.bic}.
\end{proof}


\subsection{Pattern formation for $m>2$} 

Pattern formation is understood here as the existence of an increasing one-dimensional steady state. In such a situation, the steady solution is not unique. Let $\Omega=(0,1)$. We know that the steady state is characterized by finding $(c,\lambda)$ such that
\begin{align}\label{4.1d}
  -c'' + c = \varphi^{-1}(\chi(c+\lambda))\quad\mbox{for }x\in(0,1), 
  \quad c'(0) = c'(1) = 0.
\end{align}
We multiply \eqref{4.1d} by $c'$ and integrate. Then there exists a constant $\mu$ such that
\begin{equation}\label{4.G}
\begin{aligned}
  & (c')^2 = G_\lambda(c) - \mu, \quad -\lambda\le c\le 1,
  \quad\mbox{where} \\
  & G_\lambda(c) = c^2 - 2\int_{-\lambda}^c 
  \varphi^{-1}(\chi(z+\lambda))dz, \quad
  G_\mu(c(0))=G_\mu(c(1))=\mu, 
\end{aligned}
\end{equation}
where the last two equations come from the Neumann boundary conditions. We reformulate \eqref{4.G} (choosing the positive sign):
\begin{equation}\label{4.G2}
\begin{aligned}
  & c'(x) = \sqrt{G_\lambda(c(x))-\mu}\quad\mbox{for }x\in(0,1), \quad
  c(0)=c_-, \quad c(1)=c_+, \\
  & G_\lambda(c_\pm)=\mu, \quad G_\lambda(z)>\mu\quad\mbox{for }
  z\in(c_-,c_+).
\end{aligned}
\end{equation}
Notice that $c'(x)>0$ implies that $c_-<c_+$, so the interval $(c_-,c_+)$ is well defined. 

Next, we construct an increasing solution to \eqref{4.G2} satisfying the Neumann boundary conditions at $x=0$ and $x=X(\lambda,\mu)$,
\begin{align*}
  c'(x)>0\quad\mbox{for }x\in(0,X(\lambda,\mu)), \quad
  c(0) = c_-, \quad c(X(\lambda,\mu)) = c_+. 
\end{align*}
The aim is to find $(\lambda,\mu)$ such that $X(\lambda,\mu)=1$ (Figure \ref{fig.inc}). We integrate \eqref{4.G2}:
\begin{align*}
  & X(\lambda,\mu) = \int_0^{X(\lambda,\mu)}
  \frac{c'(x)dx}{\sqrt{G_\lambda(c(x))-\mu}}
  = \int_{c_-}^{c_+}\frac{dz}{\sqrt{G_\lambda(z)-\mu}} 
  =: X_1(\lambda,\mu) + X_2(\lambda,\mu), \\
  &\mbox{where}\quad 
  X_1(\lambda,\mu) = \int_{c_-}^{\widetilde{c}}
  \frac{dz}{\sqrt{G_\lambda(z)-\mu}}, \quad
  X_2(\lambda,\mu) = \int_{\widetilde{c}}^{c_+}
  \frac{dz}{\sqrt{G_\lambda(z)-\mu}},
\end{align*}
and $\widetilde{c}\in(c_-,c_+)$ satisfies $G'_\lambda(\widetilde{c})=0$ and $G''_\lambda(\widetilde{c})<0$. Such a value exists, since $G_\lambda(c_-)=G_\lambda(c_+)$ and $G_\lambda(c)\ge \mu$ for $c\in(c_-,c_+)$. 

\begin{figure}[ht]
\includegraphics[width=60mm]{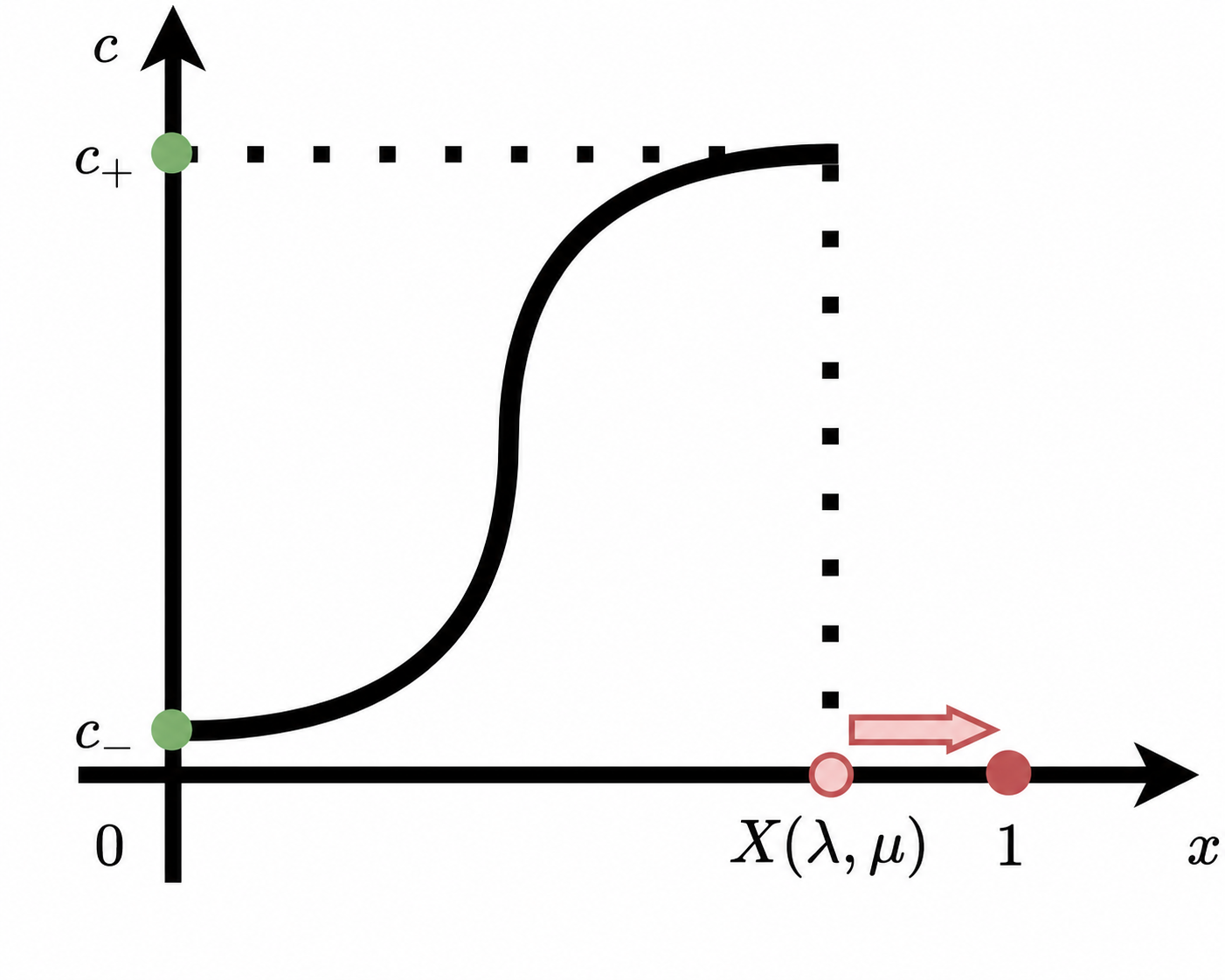}
\caption{Schematic profile of an increasing solution $c(x)$ in $[0,X(\lambda,\mu)]$.}
\label{fig.inc}
\end{figure}

Our strategy is to construct first an increasing solution $c$ on $[0,X(\lambda,\mu)]$ and then to find a pair $(\lambda^*,\mu^*)$ satisfying $X(\lambda^*,\mu^*)=1$ as well as the mass conservation condition
\begin{align*}
  M(\lambda^*,\mu^*) := \int_0^{X(\lambda^*,\mu^*)}\rho(x)dx
  = \int_0^{X(\lambda^*,\mu^*)}\varphi^{-1}(\chi(c(x)+\lambda^*))dx.
\end{align*}

\begin{proposition}[Existence of increasing solutions]\label{prop.inc}
Let $m>2$ and $0<\chi<1/(m-1)$. Then there exists a solution $(c,\lambda^*,\mu^*)$ to \eqref{4.G2} satisfying $c>0$ and $c'>0$ in $(0,1)$. Consequently, $(c,\lambda^*)$ solves \eqref{4.1d} and the mass satisfies $0<M<1$.
\end{proposition}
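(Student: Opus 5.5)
The plan is to treat \eqref{4.G2} as a time-map problem: fix $\lambda$, study the phase portrait of $G_\lambda$, and find $\mu$ by continuity. Write $a:=-\lambda>0$, $k:=(m-1)\chi\in(0,1)$ and $\beta:=1/(m-1)\in(0,1)$. Extend $\varphi^{-1}$ by zero for negative arguments, which allows $\rho$ to vanish. Then
\begin{align*}
  \rho(c)=\big(k(c-a)^+\big)^{\beta}, \qquad G_\lambda'(c)=2\big(c-\rho(c)\big).
\end{align*}

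\textbf{Step 1 (shape of $G_\lambda$).} Set $g(c)=\rho(c)-c$. Then $g(2a)=k^\beta a^\beta-2a>0$ for $a$ small, because $\beta<1$. Also $g(c)=-c<0$ on $(0,a]$, and $g(c)<0$ for $c\ge1$, since $\rho(c)<c^\beta\le c$ there by $k<1$. Finally, $\rho$ is concave on $(a,\infty)$, so $g$ has exactly two zeros $a<c_1<c_2<1$. Hence $G_\lambda$ has nondegenerate minima at $c=0$ and at $c=c_2$ and a maximum at $\widetilde c=c_1$. The minimum at $0$ is nondegenerate because $G''=2$ there; at $c_2$ we have $G''(c_2)=2(1-\rho'(c_2))>0$, again by concavity. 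Moreover
\begin{align*}
  G''(c_1)=2\big(1-\rho'(c_1)\big),\qquad \rho'(c_1)=\frac{\beta\rho(c_1)}{c_1-a}=\frac{\beta c_1}{c_1-a}.
\end{align*}
For small $a$ we get $c_1-a=O(a^{m-1})$ while $c_1\approx a$, so $\rho'(c_1)\to\infty$ as $a\to0$. We fix $a$ (that is, $\lambda^*=-a$) so small that $\rho'(c_1)-1>\pi^2$.

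\textbf{Step 2 (time map).} For $\mu\in(\mu_0,G_\lambda(c_1))$ with $\mu_0:=\max\{G_\lambda(0),G_\lambda(c_2)\}$, the level set determines unique $c_-\in(0,c_1)$ and $c_+\in(c_1,c_2)$. These satisfy $G_\lambda>\mu$ on $(c_-,c_+)$ and $G_\lambda'(c_\pm)\ne0$, so the integral defining $X(\lambda,\mu)$ converges and is continuous in $\mu$; this follows by the substitution that removes the square-root singularities at simple zeros. As $\mu\uparrow G_\lambda(c_1)$, the quadratic approximation of $G_\lambda$ at its maximum gives
\begin{align*}
  X\to\frac{\pi}{\sqrt{\rho'(c_1)-1}}<1 .
\end{align*}
As $\mu\downarrow\mu_0$, one of $c_\pm$ approaches a nondegenerate minimum of $G_\lambda$ (at $0$ or at $c_2$), so $X\to\infty$ logarithmically. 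The intermediate value theorem then yields $\mu^*$ with $X(\lambda^*,\mu^*)=1$, and the solution of $c'=\sqrt{G_\lambda(c)-\mu^*}$ with $c(0)=c_-$ gives an increasing $c$ on $(0,1)$ with $c'(0)=c'(1)=0$. Differentiating $(c')^2=G_\lambda(c)-\mu^*$ gives \eqref{4.1d}, with $\rho=\varphi^{-1}(\chi(c+\lambda^*))$.

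\textbf{Step 3 (properties).} Since $c_->0$, we have $c>0$. Since $c\le c_+<c_2<1$, we get $0\le\rho(c)\le\rho(c_2)=c_2<1$ on $[0,1]$. Furthermore $\rho>0$ near $x=1$ because $c_+>c_1>a$. Hence $0<M=\int_0^1\rho\,dx<1$.

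The main obstacle is Step 2: establishing the limit of $X$ at the top level, which requires a careful linearization uniform near the maximum, and proving continuity and blow-up of $X$ in $\mu$. Care is also needed because $G_\lambda$ is only $C^1$ at $c=a$ (as $\rho'$ is unbounded there). This is harmless, since the trajectory stays away from the critical levels except in the limits. The choice of small $a$ is the key quantitative input that makes the top-level limit of $X$ smaller than $1$.
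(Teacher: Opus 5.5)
Your argument is correct as a proof that an increasing zero-flux steady state exists, and it follows a genuinely different route from the paper.

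\textbf{Comparison with the paper.} The paper works inside the range $-\lambda\le c$ written into \eqref{4.G}. Lemma \ref{lem.propG} picks $\lambda^*$ with $G_{\lambda^*}(-\lambda^*)\le G_{\lambda^*}(\widetilde c_+)$. Then $X=\infty$ comes from the homoclinic level $\mu=G_{\lambda^*}(\widetilde c_+)$, and all orbits keep $c_-\ge-\lambda^*$. The bound $X<1$ near the top level is deduced from $G''_{\lambda^*}(\widetilde c)<-2\pi^2$, which is claimed for every $\lambda^*$ in the range \eqref{4.lam}. You instead extend $\varphi^{-1}$ by zero. Your blow-up of $X$ comes from the nondegenerate minimum of $G_\lambda(c)=c^2$ at $c=0$ (or from $c_2$), and you get the curvature condition simply by letting $a=-\lambda^*\to0$.

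This avoids a real difficulty in the paper's route. Since $\widetilde c$ decreases as $\lambda$ increases, \eqref{4.tildec} holds exactly when $\lambda^*$ lies \emph{above} the upper bound in \eqref{4.lam}. The paper's check uses $\widetilde c<(\chi/(\pi^2+1))^{1/(m-2)}$, which is \eqref{4.tildec} itself. Moreover, the two requirements on $\lambda^*$ pull in opposite directions. For $m=3$, for instance, $G_\lambda(-\lambda)\le G_\lambda(\widetilde c_+)$ holds iff $-\chi/2\le\lambda\le-4\chi/9$. In contrast, $G''_\lambda(\widetilde c)<-2\pi^2$ requires $\lambda>-\frac{\chi}{\pi^2+1}\big(1-\frac{1}{2(\pi^2+1)}\big)\approx-0.088\,\chi$.

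\textbf{What you must make explicit.} Nothing in your argument keeps $c(0)=c_-(\mu^*)\ge a$, and for small $a$ this in fact fails. In the inner variable $z=a+\ell T$ with $\ell=a^{m-1}/k$, one has exactly $\rho(z)=aT^\beta$ and $G_{\lambda^*}(z)-a^2=2\ell a\big(T-\frac{T^{1+\beta}}{1+\beta}\big)+\ell^2T^2$. Hence $X(\lambda^*,\mu)=O(a^{(m-2)/2})$ uniformly for $\mu\in[a^2,G_{\lambda^*}(c_1))$. It follows that $\mu^*<a^2$, so $c(0)=\sqrt{\mu^*}<-\lambda^*$, and $\rho\equiv0$ on some interval $[0,x_a]$. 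Consequently:
\begin{itemize}
\item your $c$ leaves the range $-\lambda\le c$ of \eqref{4.G};
\item it solves \eqref{4.1d} only with the zero extension;
\item it is not covered by the reduction to \eqref{4.steady}, which divided by $\rho(1-\rho)$.
\end{itemize}

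You should therefore say this and check directly that $(\rho,c)$ is an equilibrium of \eqref{1.rho}--\eqref{1.bic}. This is immediate. The flux $\rho(1-\rho)\na(\varphi(\rho)-\chi c)$ vanishes trivially on $\{\rho=0\}$. It also vanishes on the connected set $\{\rho>0\}=(x_a,1]$, because $\varphi(\rho)=\chi(c+\lambda^*)$ there. Finally, $\rho^{m-1}=(m-1)\chi(c+\lambda^*)^+$ is Lipschitz. A steady state with a vacuum zone is consistent with the introduction's remark that equilibria may vanish on subsets of the domain.

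\textbf{Minor point.} Define $c$ as the inverse of $c\mapsto\int_{c_-}^{c}dz/\sqrt{G_{\lambda^*}(z)-\mu^*}$. Do not define it as ``the'' solution of $c'=\sqrt{G_{\lambda^*}(c)-\mu^*}$, $c(0)=c_-$, since that problem is not unique (the constant $c_-$ also solves it).
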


\begin{figure}[ht]
\includegraphics[width=70mm]{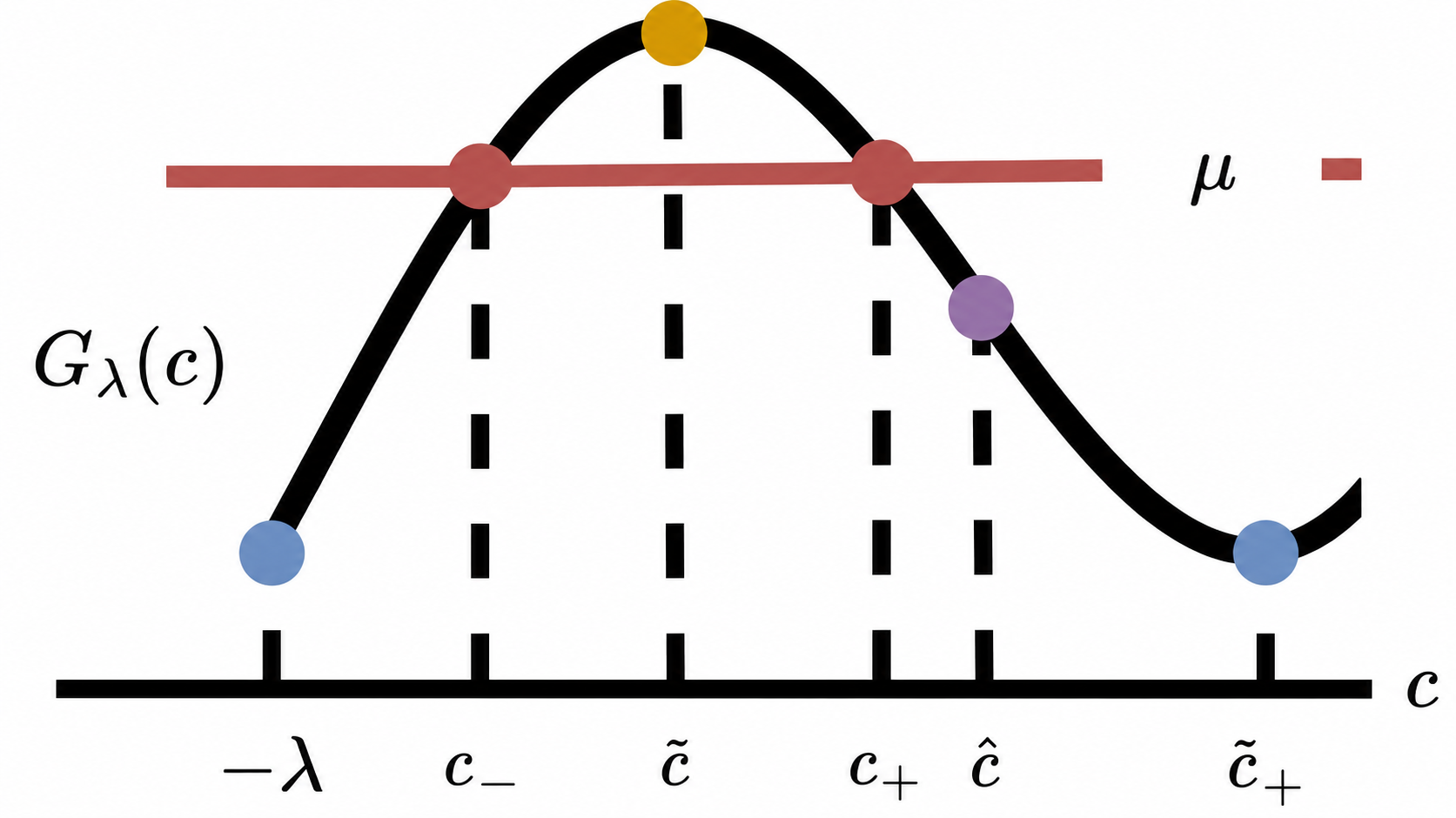}
\includegraphics[width=70mm]{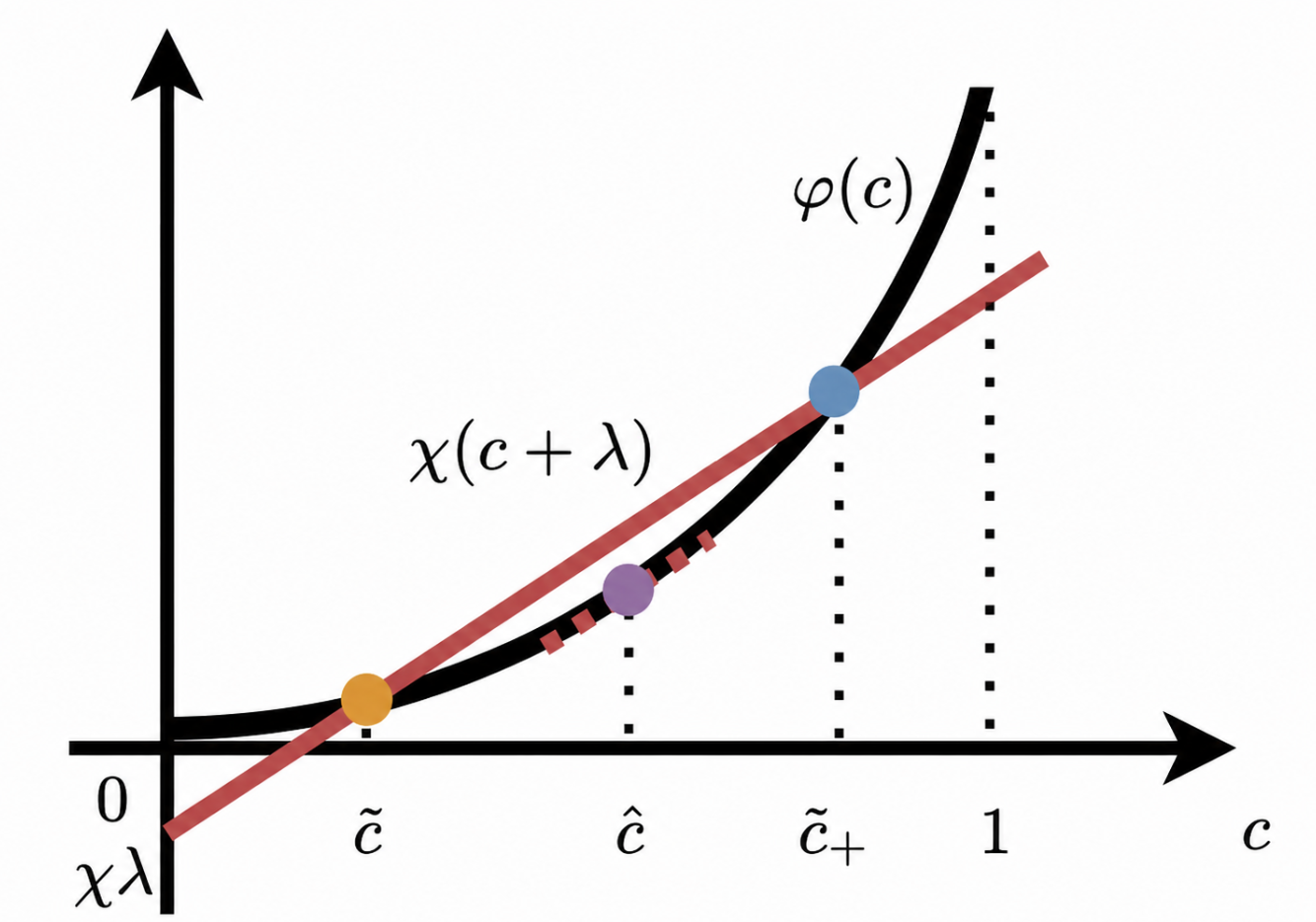}
\caption{Left: Profile of the function $G_\lambda$. Right: Intersection points of $\varphi$ and $c\mapsto\chi(c+\lambda))$.}
\label{fig.Glambda}
\end{figure}

Before we prove this result, we study the profile of $G_\lambda$. We determine the parameter regime in which $G_\lambda$ possesses two extremas $\widetilde{c}$ and $\widetilde{c}_+$; see Figure \ref{fig.Glambda} (left). It holds $0=G'_\lambda(c) = 2c - 2\varphi^{-1}(\chi(c+\lambda))$ if and only if
\begin{align}\label{4.lambda}
  \frac{c^{m-1}}{m-1} = \varphi(c) = \chi(c+\lambda).
\end{align}
This problem may have one or two solutions in $[0,1]$, depending on the value of $\lambda$; see Figure \ref{fig.lambda} (right). The line $c\mapsto \chi(c+\lambda)$ may intersect with the graph of $\varphi$ only once. This happens if $\varphi'(c)=\chi$, giving $c=\chi^{1/(m-2)}$. Inserting this value into \eqref{4.lambda} leads to 
\begin{align*}
  \frac{\chi^{(m-1)/(m-2)}}{m-1} = \chi(\chi^{1/(m-2)} + \lambda)
  \quad\mbox{and thus}\quad \lambda = -\frac{m-2}{m-1}\chi^{1/(m-2)} < 0.
\end{align*}
On the other hand, if the line $c\mapsto \chi(c+\lambda)$ is shifted upwards as in Figure \ref{fig.lambda} (right), there are two intersection points. Let one intersection point be equal to zero. The abscissa of the second intersection point may be larger than one, which is outside of the interval $[0,1]$ and which does not provide a restriction on the value of $\lambda$. Let the abscissa equal to one (or smaller) and let $\chi\le 1/(m-1)$. Then $0<c=\varphi^{-1}(\chi(c+\lambda)) < 1$ implies that $1/(\chi(m-1)) > 1+\lambda$ and $\lambda<0$. Thus, we choose $\lambda$ satisfying
\begin{align*}
  -\frac{m-2}{m-1}\chi^{1/(m-2)} < \lambda 
  < \min\bigg\{0,\frac{1}{(m-1)\chi}-1\bigg\} = 0.
\end{align*}
For $\lambda$ from this interval, there exist two solutions $\widetilde{c}$ and $\widetilde{c}_+$ to \eqref{4.lambda} satisfying
$0 < \widetilde{c} < \widetilde{c}_+ < 1$. These two values are critical points of $G_\lambda$; see Figure \ref{fig.lambda} (left).

\begin{figure}[ht]
\includegraphics[width=140mm]{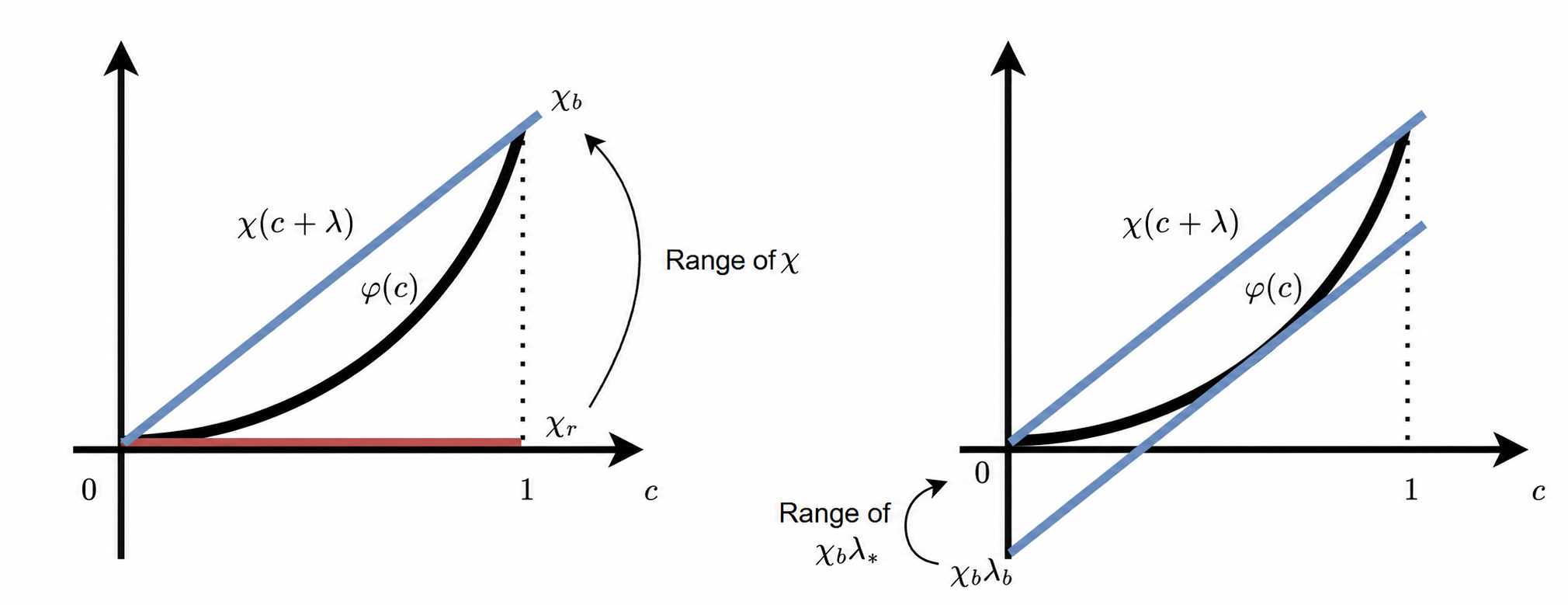}
\caption{Illustration of the solution to \eqref{4.lambda} with $\chi_r=0$ and $\chi_b=1/(m-1)$.}
\label{fig.lambda}
\end{figure}

We determine $\widehat{c}$ such that
\begin{align*}
  0 = G''_\lambda(c)
  = 2 - 2\chi(\varphi^{-1})'(\chi(c+\lambda))
  = 2\bigg(1 - \frac{\chi}{\varphi'(\rho)}\bigg),
\end{align*} 
recalling that $\rho = \varphi^{-1}(\chi(c+\lambda))$. Since $\varphi'(0)=0$, $\varphi'(1)=1$, and $\varphi'$ is increasing, there exists a unique $\widehat\rho$ such that $\varphi'(\widehat{\rho})=\chi$. Then, defining $\widehat{c}$ by $\widehat\rho = \varphi^{-1}(\chi(\widehat{c}+\lambda))$, it holds that $ G''_\lambda(\widehat{c})=0$. Since $\varphi'$ is positive, we have
\begin{align*}
  & G''_\lambda(c) < 0 \quad\mbox{if and only if}\quad
  \varphi'(\rho) < \chi\quad\mbox{if and only if}\quad
  \rho<\widehat{\rho},\ c<\widehat{c}, \\
  & G''_\lambda(c) > 0 \quad\mbox{if and only if}\quad
  \varphi'(\rho) > \chi\quad\mbox{if and only if}\quad
  \rho>\widehat{\rho},\ c>\widehat{c}.
\end{align*}
We know already that $G''_\lambda(\widetilde{c})<0$. Furthermore, the inequality $\varphi'(\rho) > \chi$ is satisfied by $\widetilde{c}_+$ such that $G''_\lambda(\widetilde{c}_+)>0$. This situation is illustrated in Figure \ref{fig.Glambda} (left). 

We show now that there exists $\lambda^*<0$ such that $G_{\lambda^*}(-\lambda^*) = G_{\lambda^*}(\widetilde{c}_+)$.

\begin{lemma}\label{lem.propG}
Let $m>2$ and $\chi_r:=0<\chi<\chi_b:=1/(m-1)$. Then there exists $\lambda^*$ satisfying
\begin{align}\label{4.lam}
  -\frac{m-2}{m-1}\chi^{1/(m-2)} < \lambda^* 
  < \bigg(\frac{1}{(\pi^2+1)(m-1)} - 1\bigg)
  \bigg(\frac{\chi}{\pi^2+1}\bigg)^{1/(m-2)},
\end{align} 
such that $G_{\lambda^*}(-\lambda^*) \leq G_{\lambda^*}(\widetilde{c}_+)$.
\end{lemma}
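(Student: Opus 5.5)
The plan is to treat the claim as a continuity statement in $\lambda$ near the degenerate value
\begin{align*}
  \lambda_0 := -\frac{m-2}{m-1}\chi^{1/(m-2)},
\end{align*}
where the two critical points $\widetilde c(\lambda)<\widetilde c_+(\lambda)$ of $G_\lambda$ merge. I would define
\begin{align*}
  \Psi(\lambda) := G_\lambda(-\lambda) - G_\lambda(\widetilde c_+(\lambda)),
\end{align*}
show that $\Psi<0$ on a right neighbourhood of $\lambda_0$, and then check that this neighbourhood meets the interval in \eqref{4.lam}. The second step is the only place where the specific upper bound in \eqref{4.lam} enters.

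\emph{Step 1: sign of $\Psi$ near $\lambda_0$.} Recall $G'_\lambda(c)=2(c-\rho(c))$ with $\rho(c)=\varphi^{-1}(\chi(c+\lambda))$. At $\lambda=\lambda_0$ the line $c\mapsto\chi(c+\lambda_0)$ is tangent to the convex graph of $\varphi$ at $c=\chi^{1/(m-2)}$ (since $m>2$). Hence $\varphi(c)\ge\chi(c+\lambda_0)$, that is $c\ge\rho(c)$, with equality only at the tangency point. So $G'_{\lambda_0}\ge0$ on $[-\lambda_0,1]$, vanishing at a single point, and $G_{\lambda_0}$ is strictly increasing there. Consequently $G_{\lambda_0}(-\lambda_0)<G_{\lambda_0}(\widetilde c_+(\lambda_0))$, where $\widetilde c_+(\lambda_0)=\chi^{1/(m-2)}>-\lambda_0$, so $\Psi(\lambda_0)<0$.

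For $\lambda$ slightly larger than $\lambda_0$, the larger intersection point $\widetilde c_+(\lambda)$ of the line with $\varphi$ depends continuously on $\lambda$. This follows from strict convexity of $\varphi$ and the fact that $\widetilde c_+$ is the larger root. The map $(\lambda,c)\mapsto G_\lambda(c)$ is also continuous, because $\varphi^{-1}$ is continuous. Hence $\Psi$ is continuous from the right at $\lambda_0$, and $\Psi(\lambda)<0$ for all $\lambda\in(\lambda_0,\lambda_0+\delta)$ with some $\delta>0$. For these $\lambda$ we still have $\lambda<0$, and the two critical points exist by the discussion preceding the lemma.

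\emph{Step 2: the interval in \eqref{4.lam} is nonempty and contains such $\lambda$.} Let $U$ denote the upper bound in \eqref{4.lam}. It suffices to show $\lambda_0<U$; any $\lambda^*\in(\lambda_0,\min\{U,\lambda_0+\delta\})$ then works. Introduce
\begin{align*}
  h(y)=\frac{y^{m-1}}{(m-1)\chi}-y, \qquad a=\chi^{1/(m-2)}.
\end{align*}
The function $h$ is strictly convex on $y>0$ with unique minimizer $y=a$, and $h(a)=\lambda_0$. Geometrically, $h(y)$ is the shift $\lambda$ for which the line $\chi(c+\lambda)$ meets $\varphi$ at $c=y$.

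Set $k=\pi^2+1$ and $s=k^{-1/(m-2)}\in(0,1)$. Using $a^{m-2}=\chi$,
\begin{align*}
  h(as)=as\Big(\frac{s^{m-2}}{m-1}-1\Big)
  =\Big(\frac{1}{k(m-1)}-1\Big)\Big(\frac{\chi}{k}\Big)^{1/(m-2)}=U.
\end{align*}
Since $s\neq1$ and $a$ is the unique minimizer of $h$, we get $U=h(as)>h(a)=\lambda_0$. Moreover $U<0$.

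\emph{Main obstacle.} The delicate point is the continuity of $\widetilde c_+(\lambda)$ at the degenerate value $\lambda_0$, where the implicit function theorem fails because the root is double. I would handle it directly. The roots of $\varphi(c)-\chi c=\chi\lambda$ are the two preimages, under the strictly convex function $c\mapsto\varphi(c)-\chi c$, of a level slightly above its minimum. These preimages converge to the minimizer as the level decreases to the minimum. This gives right-continuity of $\widetilde c_+$, which is all that Step 1 needs.
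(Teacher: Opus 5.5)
Your proof is correct. Its core is the same as the paper's: at $\lambda_b=-\frac{m-2}{m-1}\chi^{1/(m-2)}$ the line $c\mapsto\chi(c+\lambda_b)$ is tangent to the convex graph of $\varphi$. Hence $c\ge\varphi^{-1}(\chi(c+\lambda_b))$, which gives $G_{\lambda_b}(-\lambda_b)\le G_{\lambda_b}(c)$, and one then moves $\lambda$ by continuity.

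The paper also treats $\lambda=0$, where the inequality reverses, and concludes ``by continuity''. For the one-sided inequality in the lemma, that second endpoint is not needed. The continuity step really rests on the three points you make explicit:
\begin{itemize}
\item The inequality at $\lambda_b$ is strict. The paper only records $J\ge 0$, and a non-strict inequality does not propagate by continuity.
\item $\widetilde c_+(\lambda)\to\chi^{1/(m-2)}$ as $\lambda\downarrow\lambda_b$, even though the root is double there.
\item $\lambda_b<U$.
\end{itemize}

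For the last point, the paper's Remark analyzes $F(m)$ through the sign of $F'$ and its limits as $m\to2^+$ and $m\to\infty$. Your identity $U=h(as)$ gives it in one line, using that $h(y)=y^{m-1}/((m-1)\chi)-y$ is strictly convex with minimizer $a=\chi^{1/(m-2)}$ and $h(a)=\lambda_b$. It also explains what $U$ is: the shift at which the smaller critical point equals $(\chi/(\pi^2+1))^{1/(m-2)}$.

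A side effect worth noticing follows from $h$ being decreasing on $(0,a)$ with $h(\widetilde c(\lambda))=\lambda$. For $\lambda\in(\lambda_b,0)$, the condition $\lambda<U$ is equivalent to $\widetilde c(\lambda)>(\chi/(\pi^2+1))^{1/(m-2)}$, that is, to $G''_\lambda(\widetilde c)>-2\pi^2$. This is the opposite of the bound the paper's proof of Proposition~\ref{prop.inc} draws from \eqref{4.lam}. It does not affect the lemma as stated, which your argument establishes, even with strict inequality.
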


\begin{remark}\rm
We verify the well-posedness of \eqref{4.lam}, i.e., we show that
\begin{align}\label{4.m}
  \frac{m-2}{m-1}
  > \bigg(1-\frac{1}{(\pi^2+1)(m-1)}\bigg)
  \bigg(\frac{1}{\pi^2+1}\bigg)^{1/(m-2)}.
\end{align}
To this end, we introduce
\begin{align*}
  F(m) &= \log\frac{m-2}{m-1} - \log\bigg\{
  \bigg(1-\frac{1}{(\pi^2+1)(m-1)}\bigg)
  \bigg(\frac{1}{\pi^2+1}\bigg)^{1/(m-2)}\bigg\} \\
  &= \log(m-2) - \log(m-1) - \log\bigg(1-\frac{1}{(\pi^2+1)(m-1)}\bigg)
  + \frac{\log(\pi^2+1)}{m-2}.
\end{align*}
Determining the values of the derivative
\begin{align*}
  F'(m) &= \frac{((\pi^2 + 2) - (\pi^2+1)m)\log(\pi^2+1)
  + \pi^2(m-2)}{(m-2)^2((\pi^2+1)m - (\pi^2+2))} \\
  &= -\frac{\log(\pi^2+1)}{(m-2)^2} 
  + \frac{\pi^2}{(m-2)((\pi^2+1)m - (\pi^2+2))}
\end{align*}
shows that $F'(m)<0$ for $m>2$. Then it follows from $F(m)\to+\infty$ as $m\to 2^+$ and $F(m)\to 0$ as $m\to\infty$ that $F(m)>0$ for all $m>2$. This shows inequality \eqref{4.m}.
\end{remark}

\begin{proof}[Proof of Lemma \ref{lem.propG}]
Let $\chi_r<\chi<\chi_b$. We introduce the function
\begin{align*}
  J(\chi,c) := G_\lambda(c) - G_\lambda(-\lambda)
  = c^2 - \lambda^2 - 2\int_{-\lambda}^c\varphi^{-1}(\chi(z+\lambda))dz.
\end{align*}
Let $\lambda=\lambda_b:=-(m-2)\chi^{1/(m-2)}/(m-1)$. This corresponds to the case when the curves $\varphi$ and $c\mapsto\chi(c+\lambda)$ intersect at one point, implying that $\chi(c+\lambda)$ is below the curve $\varphi$, i.e.\ $\varphi(c)\ge\chi(c+\lambda)$ or
$c \ge \varphi^{-1}(\lambda(c+\lambda))$ for all $c\in(-\lambda,1)$. Consequently,
\begin{align*}
  J(\chi,c) \ge c^2 - \lambda^2 - 2\int_{-\lambda}^c zdz = 0
  \quad\mbox{for all }c\in(-\lambda,1),
\end{align*}
which implies that $G_\lambda(-\lambda)\le G_\lambda(c)$. 

Let $\lambda=0$. The intersection points of $\varphi$ and $c\mapsto\chi c$ are exactly zero and $\widetilde{c}_+$. We know that $\widetilde{c}$ is an extremal point of $G_\lambda$. We find, for sufficiently small $c>0$, that
\begin{align*}
  G_0(0) - G_0(c) &= -c^2 + 2\int_0^c\varphi^{-1}(\chi z)dz
  = -c^2 + 2\int_0^c[(m-1)\chi z]^{1/(m-1)}dz \\
  &= c\bigg(-c^{(m-2)/(m-1)} + 2[(m-1)\chi]^{1/(m-1)}
  \frac{m}{m-1}\bigg) > 0,
\end{align*}
so zero is a local maximum of $G_\lambda$, and since $\widetilde{c}<\widetilde{c}_+$, we infer that $\widetilde{c}=0$. The case $\lambda=0$ corresponds to the situation when the curve $\varphi$ is below the line $c\mapsto\chi c$, i.e.\ $\varphi(c)\le\chi c$ or $c\le\varphi^{-1}(\chi c)$ for all $c\in(0,\widetilde{c}_+)$. Hence,
\begin{align*}
  G_0'(c) = 2(c - \varphi^{-1}(\chi c)) < 0\quad\mbox{for }
  c\in(0,\widetilde{c}_+)
\end{align*}
Consequently, $G_0(\widetilde{c}_+)<G_0(0)$ and $G_\lambda(-\lambda)>G_\lambda(\widetilde{c}_+)$ for $\lambda=0$. By continuity, there exists $\lambda^*$ in the desired range such that $G_{\lambda^*}(-\lambda^*)\leq G_{\lambda^*}(\widetilde{c}_+)$.
\end{proof}

We are now ready to prove Proposition \ref{prop.inc}. 

\begin{proof}[Proof of Proposition \ref{prop.inc}]
The idea is to show that there exists $\mu^*$ such that $X(\lambda^*,\mu^*)=1$. First, we prove that there exists $\mu$ such that $X(\lambda^*,\mu)<1$. Recall the boundary values $c_\pm=c_\pm(\mu)$; see \eqref{4.G2}. A Taylor expansion of $G_{\lambda^*}(c)$ around $\widetilde{c}$ shows that
\begin{align*}
  \lim_{\substack{\mu\to G_{\lambda^*}(\widetilde{c}), \\
  c\in(c_-(\mu),c_+(\mu))}}A_\mu(c)
  = -\frac12G''_{\lambda^*}(\widetilde{c}), \quad
  \mbox{where}\quad
  A_\mu(c) := \frac{G_{\lambda^*}(c)-\mu}{(c_+-c)(c-c_-)} .
\end{align*}
We set
\begin{align*}
  c(z) := \frac{z}{2}\big(c_+(\mu)-c_-(\mu)\big)
  + \frac12\big(c_+(\mu)+c_-(\mu)\big) \quad\mbox{for }z\in[-1,1].
\end{align*}
Observing that $c(\pm 1) = c_\pm(\mu)$, a substitution leads to
\begin{align*}
  X(\lambda^*,\mu) &= \int_{c_-(\mu)}^{c_+(\mu)}
  \frac{dc}{\sqrt{G_{\lambda^*}(c)-\mu}}
  = \int_{c_-(\mu)}^{c_+(\mu)}
  \frac{dc}{\sqrt{(c_+(\mu)-c)(c-c_-(\mu))A_\mu(c)}} \\
  &= \int_{-1}^1\frac{dz}{\sqrt{(1-z)(1+z)A_\mu(c(z))}}.
\end{align*}
By dominated convergence,
\begin{align*}
  \lim_{\mu\to G_{\lambda^*}(\widetilde{c})}X(\lambda^*,\mu)
  =  \lim_{\mu\to G_{\lambda^*}(\widetilde{c})}\int_{-1}^1
  \frac{\sqrt{2}dz}{\sqrt{-(1-z)(1+z)G''_{\lambda^*}(\widetilde{v})}}
  = \frac{\sqrt{2}\pi}{\sqrt{-G''_{\lambda^*}(\widetilde{v})}}.
\end{align*}

We claim that $G''_{\lambda^*}(\widetilde{c})<-2\pi^2$. Then, choosing $\mu$ smaller than but close to $G''_{\lambda^*}(\widetilde{c})$, we obtain $X(\lambda^*,\mu)<1$. To prove the claim, we recall that $G''_{\lambda^*}(\widetilde{c}) = 2(1-\chi/\varphi'(\widetilde{c}))$. Therefore, the inequality $G''_{\lambda^*}(\widetilde{c})<-2\pi^2$ is equivalent to
\begin{align}\label{4.tildec}
  (\pi^2+1)\widetilde{c}^{m-2} = (\pi^2+1)\varphi'(\widetilde{c}) 
  < \chi = \frac{\varphi(\widetilde{c})}{\widetilde{c}+\lambda^*}
  = \frac{\widetilde{c}^{m-1}}{(m-1)(\widetilde{c}+\lambda^*)}.
\end{align}
We solve this inequality for $\lambda^*$:
\begin{align*}
  \lambda^* < \widetilde{c}\bigg(\frac{1}{(\pi^2+1)(m-1)}-1\bigg).
\end{align*}
This inequality holds true in view of our assumption \eqref{4.lam} and the property $\widetilde{c} < (\chi/(\pi^2+1))^{1/(m-2)}$ from \eqref{4.tildec}.

Next, we prove that $X(\lambda^*,\mu)=+\infty$ if $\mu=G_{\lambda^*}(\widetilde{c}_+)$. We observe that in this case $c_+=\widetilde{c}_+$ and $G'_{\lambda^*}(c_+)=0$, $G''_{\lambda^*}(c_+)>0$ (see Figure \ref{fig.Glambda} left). Then, by Taylor expansion around $c_+$,
\begin{align*}
  \lim_{c\to c_+}\frac{c_+-c}{\sqrt{G_{\lambda^*}(c)
  - G_{\lambda^*}(c_+)}}
  &= \lim_{c\to c_+}\frac{c-c_+}{\sqrt{G_{\lambda^*}(c_+)(c-c_+)^2/2
  + o((c-c_+)^2)}} \\
  &= \sqrt{\frac{2}{G_{\lambda^*}''(c_+)}} > 0.
\end{align*}
Thus, the integrand of
\begin{align*}
  X(\lambda^*,\mu) = \int_{c_-}^{c_+}\frac{dz}{\sqrt{G_{\lambda^*}(z)
  - G_{\lambda^*}(c_+)}}
\end{align*}
behaves like $1/(c_+ -c)$ for $c\to c_+$, which means that $X(\lambda^*,\mu)$ diverges. This shows that $X(\lambda^*,\mu)=+\infty$, proving the claim.
\end{proof}


\section{Long-time behavior of solutions}\label{sec.long} 

We have shown in the previous section that for $1<m\le 2$ and $\chi\le 1$, the unique steady state to \eqref{1.rho}--\eqref{1.bic} is the constant pair $(M,M)$ (if $|\Omega|=1$). In this section, we show that the solution constructed in Theorem \ref{thm.ex} converges to this steady state as $t\to\infty$. 

Recall definition \eqref{1.relent} of the relative entropy $H_1$ and let $|\Omega|=1$. We deduce from \eqref{3.claim} and mass conservation that
\begin{align*}
  H_1(\rho,c|M,M)\Big|_s^t
  &= \int_\Omega\big(\rho(\log\rho-1)+(1-\rho)(\log(1-\rho)-1)\big)
  dx\Big|_s^t - \tau\int_s^t\int_\Omega\pa_t c\Delta c dxds \\
  &\le -\int_s^t\int_\Omega\rho^{m-2}|\na\rho|^2 dxds
  + (\chi+1)\int_s^t\int_\Omega\na\rho\cdot\na c dxds \\
  &\phantom{xx}
  - \int_s^t\int_\Omega(\eta(\Delta c)^2 + |\na c|^2)dxds.
\end{align*}
We use the Poincar\'e inequality 
\begin{align*}
  K_1\int_\Omega|\na c|^2 dx \le \int_\Omega|\Delta c|^2 dx, 
\end{align*}
where $K_1>0$ is the principal Neumann eigenvalue (see the proof of \cite[Theorem 2.1]{CCWWZ20}), Young's inequality with $\eps>0$, the property $\rho^{m-2}\ge 1$ (which follows from $m\le 2$), and the bound $\chi\le 1$ to estimate as follows:
\begin{align*}
  H_1&(\rho,c|M,M)\Big|_s^t
  \le -\int_s^t\int_\Omega\big(\rho^{m-2}|\na\rho|^2
  + (\eta K_1+1)|\na c|^2\big)dxd\sigma \\
  &\phantom{xx}+ (\chi+1)\int_0^t\int_\Omega\na\rho\cdot\na c 
  dxd\sigma \nonumber \\
  &\le -\int_s^t\int_\Omega\bigg\{\bigg(1-\frac{1}{1+\eps}\bigg)
  |\na\rho|^2
  + \bigg(\eta K_1+1-\frac{1+\eps}{4}(\chi+1)^2\bigg)|\na c|^2\bigg\}dxd\sigma. \nonumber 
\end{align*}
Since $\chi\le 1$, the choice $\eps=\eta K_1$ leads to
\begin{align*}
  \eta K_1+1-\frac{1+\eps}{4}(\chi+1)^2
  \ge \eta K_1 + 1 - (1+\eps) = 0
\end{align*}
and hence to
\begin{align}\label{4.H}
  H_1(\rho,c|M,M)\Big|_s^t
  \le -\bigg(1-\frac{1}{1+\eta K_1}\bigg)\int_s^t\int_\Omega
  |\na\rho|^2 dxd\sigma.
\end{align}

We use the following variant of the logarithmic Sobolev inequality \cite[Theorem 1]{AbLe25}:
\begin{align*}
  \int_\Omega\rho\log\frac{\rho}{M}dx 
  &\le C(\Omega)M\int_\Omega|\na \rho|^2 dx, \\
  \int_\Omega(1-\rho)\log\frac{1-\rho}{1-M}dx
  &\le C(\Omega)(1-M)\int_\Omega|\na\rho|^2 dx,
\end{align*}
where $C(\Omega)>1$ only depends on $d$ and the diameter of $\Omega$,
implying that
\begin{align*}
  \int_\Omega\bigg(\rho\log\frac{\rho}{M} 
  + (1-\rho)\log\frac{1-\rho}{1-M}\bigg)dx
  \le C(\Omega)\int_\Omega|\na\rho|^2 dx.
\end{align*}
Then \eqref{4.H} becomes
\begin{align*}
  H_1(\rho,c|M,M)\Big|_s^t
  \le -\mu_0\int_s^t H_1(\rho,c|M,M)dxd\sigma, \quad
  \mbox{where}\ \mu_0 = \frac{1}{C(\Omega)}\frac{\eta K_1}{1+\eta K_1},
\end{align*} 
and after division by $s-t$ and passing to the limit $s\to t$,
\begin{align*}
  \frac{d}{dt}H_1(\rho,c|M,M)\le -\mu_0 H_1(\rho,c|M,M).
\end{align*}
We conclude from Gronwall's inequality that 
\begin{align*}
  H_1(\rho(t),c(t)|M,M) \le H_1(\rho^0,c^0|M,M)e^{-\mu_0 t}, \quad t>0.
\end{align*}
A Taylor expansion around $M$ leads for some $\xi$ between $\rho$ and $M$ to
\begin{align*}
  \rho\log\frac{\rho}{M} + (1-\rho)\log\frac{1-\rho}{1-M}
  = \frac12\frac{(\rho-M)^2}{\xi(1-\xi)} \ge 2(\rho-M)^2.
\end{align*}
This shows that
\begin{align}\label{4.rho}
  2\|\rho(t)-M\|_{L^2(\Omega)}^2 \le H_1(\rho^0,c^0|M,M)e^{-\mu_0 t}.
\end{align}

The exponential decay of $c(t)$ follows from \eqref{4.rho}. Indeed, using the test function $c-M$, we find from \eqref{1.c} that
\begin{align*}
  \frac{\tau}{2}\frac{d}{dt}\int_\Omega(c-M)^2 dx
  &+ \int_\Omega\big(\eta|\na(c-M)|^2 + (c-M)^2\big)dx
  = \int_\Omega (\rho-M)(c-M)dx \\
  &\le -\frac{1}{2}\int_\Omega(c-M)^2dx 
  + \frac{1}{2}\int_\Omega(\rho-M)^2dx
\end{align*}
and hence,
\begin{align*}
  \tau\frac{d}{dt}\int_\Omega(c-M)^2 dx + \int_\Omega(c-M)^2dx
  \le \int_\Omega(\rho-M)^2dx \le \frac12H_1(\rho^0,c^0|M,M)e^{-\mu_0 t}.
\end{align*}
We apply Gronwall's inequality:
\begin{align*}
  \|c(t)-M\|_{L^2(\Omega)}^2 \le Ce^{-\min\{1/\tau,\mu_0\}t}, \quad t>0,
\end{align*}
where $C>0$ depends on $\tau$, $(\rho^0,c^0)$, and $\Omega$. With the test function $-\Delta(c-M)$, we infer that 
\begin{align*}
  \frac{\tau}{2}\frac{d}{dt}&\int_\Omega|\na(c-M)|^2 dx
  + \int_\Omega\big(\eta|\Delta(c-M)|^2 + |\na(c-M)|^2\big)dx \\
  &= -\int_\Omega(\rho-M)\Delta(c-M)dx
  \le\eta \int_\Omega|\Delta(c-M)|^2 
  + \frac{\eta}{4}\int_\Omega(\rho-M)|^2 dx.
\end{align*}
Applying Gronwall's inequality again leads to
\begin{align*}
  \|\na(c(t)-M)\|_{L^2(\Omega)}^2\le C e^{-\min\{1/\tau,\mu_0\}t}, 
  \quad t>0,
\end{align*}
finishing the proof of Theorem \ref{thm.time}.


\section{Asymptotic limits}\label{sec.asym}

In this section, we prove the asymptotic limits $\tau\to 0$ (Theorem \ref{thm.D}) and $\eta\to 0$ (Theorem \ref{thm.eta}). 

\subsection{Parabolic--elliptic limit}

We wish to perform the limit $\tau\to 0$ in equations \eqref{1.rho}--\eqref{1.c}, leading to the parabolic--elliptic Keller--Segel model. Let $(\rho_\tau,c_\tau)$ be a solution to 
\begin{equation}\label{5.rhoc}
\begin{aligned}
  \pa_t\rho_\tau &= \diver\big((1-\rho_\tau)\rho_\tau^{m-1}\na\rho_\tau
  - \chi\rho_\tau(1-\rho_\tau)\na c_\tau\big), \\
  \tau\pa_t c_\tau &= \eta\Delta c_\tau - c_\tau + \rho_\tau
  \quad\mbox{in }\Omega,\ t>0,
\end{aligned}
\end{equation}
satisfying the initial and no-flux boundary conditions \eqref{1.bic}. The proof is based on uniform estimates derived from the entropy functional \eqref{1.Ftau}. 

\begin{lemma}\label{lem.entD}
Let $(\rho_\tau,c_\tau)$ be a weak solution to \eqref{1.bic}, \eqref{5.rhoc}. Then there exists a constant $C>0$, independent of $\tau$, such that
\begin{align*}
  F_\tau(\rho_\tau,c_\tau)\Big|_0^t 
  + \frac12\int_0^t\int_\Omega \rho_\tau^{m-2}|\na\rho_\tau|^2 dxds 
  \le C\tau.
\end{align*}
\end{lemma}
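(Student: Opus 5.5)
The plan is to derive the estimate at the level of the $\eps$-approximate solutions $(\rho_\eps,c_\eps)$ of Section~\ref{sec.ex}, and then pass to the limit $(\delta,\eps)\to 0$ exactly as in the proof of \eqref{3.claim}. That proof already gives
\begin{align*}
  \int_\Omega\big(\rho(\log\rho-1)+(1-\rho)(\log(1-\rho)-1)\big)dx\Big|_0^t
  + \int_0^t\int_\Omega\rho^{m-2}|\na\rho|^2 dxds
  \le \chi\int_0^t\int_\Omega\na\rho\cdot\na c\,dxds ,
\end{align*}
where we write $(\rho,c)=(\rho_\tau,c_\tau)$. This inequality uses no $\tau$-dependence, so it controls the logarithmic part of $F_\tau$ up to the chemotactic cross term on the right. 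It remains to add the time derivative of $\chi\tau\int_\Omega\rho c\,dx$ and to show that it compensates this cross term.

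\textbf{Step 1 (product rule).} For the coupling term I use
\[
  \frac{d}{dt}\,\chi\tau\int_\Omega\rho c\,dx
  = \chi\tau\langle\pa_t\rho,c\rangle + \chi\int_\Omega\rho\,\tau\pa_t c\,dx .
\]
In the second term I insert $\tau\pa_t c=\eta\Delta c-c+\rho$ and integrate by parts, which gives $-\chi\eta\int_\Omega\na\rho\cdot\na c\,dx+\chi\int_\Omega\rho(\rho-c)dx$. The gradient part is meant to cancel the cross term $\chi\int_\Omega\na\rho\cdot\na c\,dx$ from the entropy inequality. In the normalization where the coefficients match this cancellation is exact; otherwise the remainder $(1-\eta)\chi\int_\Omega\na\rho\cdot\na c\,dx$ is split by Young's inequality into $\frac14\int_\Omega\rho^{m-2}|\na\rho|^2dx$ plus a multiple of $\int_\Omega|\na c|^2dx$, using $\rho^{2-m}\le 1$ for $m\ge 2$ and the bound $|\na\rho|\le\rho^{(m-2)/2}|\na\rho|$ for $m\le 2$.

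\textbf{Step 2 (flux term).} In the first term I insert the weak form of the $\rho$-equation:
\[
  \chi\tau\langle\pa_t\rho,c\rangle
  = -\chi\tau\int_\Omega\rho(1-\rho)\big(\rho^{m-2}\na\rho-\chi\na c\big)\cdot\na c\,dx .
\]
Since $0\le\rho\le 1$, we have $\rho^2(1-\rho)^2\rho^{2(m-2)}\le\rho^{m-2}$. Young's inequality therefore bounds this term by $\frac14\int_\Omega\rho^{m-2}|\na\rho|^2dx+C(\tau+\tau^2)\int_\Omega|\na c|^2dx$.

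\textbf{Step 3 (gradient bound on $c$).} The $\tau$-uniform bound on $\int_0^t\int_\Omega|\na c|^2dxds$ comes from \eqref{2.cc}:
\[
  \eta\int_0^t\int_\Omega|\na c|^2dxds
  \le \frac{\tau}{2}\|c^0\|_{L^2(\Omega)}^2+t|\Omega| .
\]
With this bound all gradient contributions from Steps 1 and 2 become $O(\tau)$ after time integration, and the two quarters of the dissipation are absorbed. This leaves $\frac12\int_0^t\int_\Omega\rho^{m-2}|\na\rho|^2dxds$ on the left-hand side, as in the statement.

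\textbf{Main obstacle.} The step I expect to be hardest is the zeroth-order remainder $\chi\int_0^t\int_\Omega\rho(\rho-c)dxds$ from Step 1, which is not obviously $O(\tau)$. My first attempt is to rewrite $\rho-c=\tau\pa_t c-\eta\Delta c$. Then
\[
  \int_0^t\int_\Omega\rho(\rho-c)dxds
  = \tau\int_0^t\int_\Omega\rho\,\pa_t c\,dxds+\eta\int_0^t\int_\Omega\na\rho\cdot\na c\,dxds .
\]
The gradient part would be combined with the cancellation in Step 1. For the $\tau$-part I would use the $L^2(\Omega_T)$ bound $\tau\int_0^t\int_\Omega|\pa_t c|^2dxds\le C$, obtained by testing the $c$-equation with $\pa_t c$; this bound is uniform in $\tau$ since $c^0\in H^1(\Omega)$. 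Cauchy--Schwarz then gives $\tau\int_0^t\int_\Omega\rho\,\pa_t c\,dxds\le\sqrt{\tau}\,C$. If this route only yields $\sqrt\tau$ or an $O(1)$ constant depending on $T$, I would instead include the energy $\frac{\chi}{2}\int_\Omega(\eta|\na c|^2+c^2)dx$ in the functional, as in \eqref{2.phic}, so that these terms appear as an exact time derivative rather than as a remainder.
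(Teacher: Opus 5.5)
You have a genuine gap at the last step. Your Steps 1--3 follow the paper's argument: \eqref{3.claim}, plus the time derivative of the coupling term so that the $\na\rho\cdot\na c$ terms cancel, Young's inequality for $\chi\tau\langle\pa_t\rho,c\rangle$, and the $\tau$-uniform bound on $\na c$ in $L^2(\Omega_T)$. The zeroth-order term $\chi\int_0^t\int_\Omega\rho(\rho-c)\,dxds$ is left open, and neither remedy works.

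\begin{itemize}
\item Substituting $\rho-c=\tau\pa_t c-\eta\Delta c$ is circular. It produces $+\chi\eta\int_0^t\int_\Omega\na\rho\cdot\na c\,dxds$, which exactly undoes the cancellation of Step~1. You are then back to the uncompensated cross term $\chi\int_0^t\int_\Omega\na\rho\cdot\na c\,dxds$ of \eqref{3.claim}.
\item The bound $\tau\|\pa_t c\|_{L^2(\Omega_T)}^2\le C$ does not follow from testing with $\pa_t c$ alone, because $\int_\Omega\rho\,\pa_t c\,dx$ needs control of $\pa_t\rho$. In the paper this bound is Lemma~\ref{lem.ceps}, which is proved using the present lemma.
\item Adding $\frac{\chi}{2}\int_\Omega(\eta|\na c|^2+c^2)dx$ changes the functional, so it says nothing about $F_\tau$. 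It also still gives only an $O(1)$ bound.
\end{itemize}

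The term is genuinely of order one, so $C\tau$ is out of reach. For smooth solutions, where \eqref{3.claim} holds with equality, the left-hand side of the lemma converges as $\tau\to0$ to
$\chi\int_0^t\int_\Omega\na\rho\cdot\na c\,dxds-\frac12\int_0^t\int_\Omega\rho^{m-2}|\na\rho|^2dxds$, with $-\eta\Delta c+c=\rho$. This is positive, for instance, for small perturbations of a constant state when $\chi$ is large.

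The paper simply bounds the zeroth-order term by $\chi\,\mathrm{meas}(\Omega_T)$, using $0\le\rho,c\le1$. Its proof therefore gives the inequality with a constant $C$ independent of $\tau$; the $C\tau$ in the statement is not what the argument yields. This $\tau$-independent bound is all that is used afterwards, namely for the $\tau$-uniform bounds on $\rho_\tau^{m/2}$ in $L^2(0,T;H^1(\Omega))$ and on $\pa_t\rho_\tau$ in $L^2(0,T;H^1(\Omega)')$. With that target, your argument closes once you bound the zeroth-order term trivially and repair Step~1 as follows.

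Step~1 is wrong for $\eta\ne1$ and $m>2$. On $[0,1]$ the inequality $\rho^{2-m}\le1$ holds for $m\le2$, not for $m\ge2$. For $m>2$ the weight $\rho^{2-m}$ is unbounded near $\rho=0$, so Young's inequality cannot split $(1-\eta)\chi\int_\Omega\na\rho\cdot\na c\,dx$ as you propose. The paper avoids this by putting the coefficient $\chi\tau/\eta$ in front of $\int_\Omega\rho c\,dx$, which makes the cancellation exact for every $\eta$. Even for $m\le2$, the leftover $\int_0^t\int_\Omega|\na c|^2dxds$ from that splitting has no factor $\tau$. This contradicts your claim in Step~3 that all gradient contributions are $O(\tau)$.
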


\begin{proof}
We sum \eqref{3.claim}, rewritten as
\begin{align*}
  \int_\Omega\big(\rho_\tau(\log\rho_\tau-1)
  &+ (1-\rho_\tau)(\log(1-\rho_\tau)-1)\big)dx\Big|_0^t
  + \int_0^t\int_\Omega\rho_\tau^{m-2}|\na\rho_\tau|^2 dxds \\
  &\le \chi\int_0^t\int_\Omega\na\rho_\tau\cdot\na c_\tau dxds,
\end{align*}
and
\begin{align*}
  \frac{\tau\chi}{\eta}\int_\Omega\rho_\tau c_\tau dx\Big|_0^t
  &= \frac{\tau\chi}{\eta}\int_0^t\langle\pa_t\rho_\tau,c_\tau\rangle ds
  + \frac{\tau\chi}{\eta}\int_0^t\int_\Omega\rho_\tau\pa_t c_\tau dxds \\
  &=  -\frac{\tau\chi}{\eta}\int_0^t\int_\Omega\big(
  (1-\rho_\tau)\rho_\tau^{m-1}\na\rho_\tau\cdot\na c_\tau
  - \chi\rho_\tau(1-\rho_\tau)|\na c_\tau|^2\big)dxds \\
  &\phantom{xx}- \chi\int_0^t\int_\Omega
  \na\rho_\tau\cdot\na c_\tau dxds
  + \frac{\chi}{\eta}\int_0^t\int_\Omega(\rho_\tau-c_\tau)\rho_\tau dxds.
\end{align*}
Then the term involving $\na\rho_\tau\cdot\na c_\tau$ cancels and, observing that $(\rho_\tau-c_\tau)c_\tau\le 1$ and applying Young's inequality, we obtain
\begin{align*}
  F_\tau(\rho_\tau,c_\tau)\Big|_0^t 
  &+ \int_0^t\int_\Omega\rho_\tau^{m-2}|\na\rho_\tau|^2 dxds \\
  &\le \frac{1}{2}\int_0^t\int_\Omega\rho_\tau^{m-2}
  |\na\rho_\tau|^2dxds
  + \frac12\bigg(\frac{\tau\chi}{\eta}\bigg)^2
  \int_0^t\int_\Omega(1-\rho_\tau)^2\rho_\tau^m
  |\na c_\tau|^2 dxds \\
  &\phantom{xx}+ \frac{\tau\chi^2}{\eta}
  \int_0^t\int_\Omega\rho_\tau(1-\rho_\tau)
  |\na c_\tau|^2 dxds + \chi\mbox{meas}(\Omega_T).
\end{align*}
Using the test function $c_\tau$ in equation \eqref{1.c} gives
\begin{align*}
  \tau\int_\Omega c_\tau(t)^2 dx - \tau\int_\Omega (c^0)^2 dx
  + \int_0^t\int_\Omega\big(\eta|\na c_\tau|^2 + c_\tau^2\big)dxds
  = \int_0^t\int_\Omega\rho_\tau c_\tau dxds \le C,
\end{align*}
and we conclude a uniform bound for $(c_\tau)$ in $L^2(0,T;H^1(\Omega))$, finishing the proof.
\end{proof}

\begin{lemma}\label{lem.ceps}
Let $c^0\in H^1(\Omega)$. Then there exists $C>0$ independent of $\tau$ such that
\begin{align*}
  \sqrt\tau\|\pa_t c_\tau\|_{L^2(\Omega_T)} 
  + \|c_\tau\|_{L^\infty(0,T;H^1(\Omega))}\le C.
\end{align*}
\end{lemma}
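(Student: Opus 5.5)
The plan is to test equation \eqref{1.c} for $c_\tau$ with $\pa_t c_\tau$. This is the standard energy estimate for the linear parabolic equation, and it gives exactly the two quantities in the statement. Formally,
\begin{align*}
  \tau\int_\Omega|\pa_t c_\tau|^2 dx
  + \frac12\frac{d}{dt}\int_\Omega\big(\eta|\na c_\tau|^2 + c_\tau^2\big)dx
  = \int_\Omega\rho_\tau\pa_t c_\tau dx .
\end{align*}
The difficulty is the right-hand side. A naive Young inequality $\rho_\tau\pa_t c_\tau\le \frac{\tau}{2}|\pa_t c_\tau|^2+\frac{1}{2\tau}\rho_\tau^2$ produces a factor $1/\tau$, which is not acceptable. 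I would therefore avoid it and rewrite the right-hand side using the time derivative of $\rho_\tau c_\tau$, as in Lemma \ref{lem.entD}:
\begin{align*}
  \int_\Omega\rho_\tau\pa_t c_\tau dx
  = \frac{d}{dt}\int_\Omega\rho_\tau c_\tau dx - \langle\pa_t\rho_\tau,c_\tau\rangle .
\end{align*}

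Integrating in time from $0$ to $t$, the boundary term $\int_\Omega\rho_\tau c_\tau dx|_0^t$ is bounded by $2\,\mathrm{meas}(\Omega)$, since $0\le\rho_\tau,c_\tau\le1$ by Theorem \ref{thm.ex}. The remaining term is computed from the weak formulation of the $\rho$-equation:
\begin{align*}
  -\int_0^t\langle\pa_t\rho_\tau,c_\tau\rangle ds
  = \int_0^t\int_\Omega\big((1-\rho_\tau)\rho_\tau^{m-1}\na\rho_\tau
  - \chi\rho_\tau(1-\rho_\tau)\na c_\tau\big)\cdot\na c_\tau dxds .
\end{align*}
The second part is bounded by $\chi\|\na c_\tau\|_{L^2(\Omega_T)}^2$, which is uniformly bounded in $\tau$ by the $L^2(0,T;H^1(\Omega))$ estimate proved at the end of Lemma \ref{lem.entD}. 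For the first part I use Cauchy--Schwarz:
\begin{align*}
  \big|(1-\rho_\tau)\rho_\tau^{m-1}\na\rho_\tau\cdot\na c_\tau\big|
  \le \frac12\rho_\tau^{m-2}|\na\rho_\tau|^2 + \frac12\rho_\tau^m|\na c_\tau|^2 .
\end{align*}
The term $\rho_\tau^m|\na c_\tau|^2$ is again controlled by the uniform $L^2(0,T;H^1(\Omega))$ bound on $c_\tau$. The term $\int_0^t\int_\Omega\rho_\tau^{m-2}|\na\rho_\tau|^2dxds$ is bounded uniformly in $\tau$ by Lemma \ref{lem.entD}. Indeed, $F_\tau$ is bounded above and below uniformly, because the entropy density is bounded on $[0,1]$ and $\chi\tau\rho c\le\chi\tau$. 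Since $c^0\in H^1(\Omega)$, the initial energy $\int_\Omega(\eta|\na c^0|^2+(c^0)^2)dx$ is finite. Taking the supremum over $t$ then yields both $\tau\|\pa_t c_\tau\|_{L^2(\Omega_T)}^2\le C$ and $\sup_t\|c_\tau(t)\|_{H^1(\Omega)}\le C$ (note that $\eta>0$ is fixed here).

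The main obstacle is justifying the test function $\pa_t c_\tau$ and the product rule for $\frac{d}{dt}\int_\Omega\rho_\tau c_\tau dx$ at the regularity available. From Theorem \ref{thm.ex} we know $\pa_t c_\tau,\Delta c_\tau\in L^2(\Omega_T)$, so testing with $\pa_t c_\tau$ is allowed. The identity $\frac12\frac{d}{dt}\|\na c_\tau\|^2=-\int_\Omega\Delta c_\tau\,\pa_t c_\tau dx$ follows from the standard Lions--Magenes lemma. For the product $\rho_\tau c_\tau$, we have $\pa_t\rho_\tau\in L^2(0,T;H^1(\Omega)')$ paired against $c_\tau\in L^2(0,T;H^1(\Omega))$, and $\rho_\tau\in L^\infty$ paired against $\pa_t c_\tau\in L^2$. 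The product rule holds by mollifying in time. If that step is delicate, I would instead perform the whole computation at the level of the $\eps$-approximation $(\rho_\eps,c_\eps)$ of Section \ref{sec.ex}, where everything is smooth enough. The bound on $\rho^{m-2}|\na\rho|^2$ is then replaced by the $\eps$-uniform bound on $\rho_\eps(1-\rho_\eps)|\na\phi_\eps(\rho_\eps)|^2$ from \eqref{2.estphi}, whose constant is also independent of $\tau$ (check it). Finally, I would pass to the limit by weak lower semicontinuity.
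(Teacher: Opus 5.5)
Your proposal is correct and follows the paper's proof. Both test the $c$-equation with $\pa_t c_\tau$, rewrite $\int_\Omega\rho_\tau\pa_t c_\tau\,dx$ as $\frac{d}{dt}\int_\Omega\rho_\tau c_\tau\,dx-\langle\pa_t\rho_\tau,c_\tau\rangle$, bound the boundary term via $0\le\rho_\tau,c_\tau\le1$, and control the pairing with the $\tau$-uniform bounds of Lemma \ref{lem.entD}; the only difference is that the paper bounds the pairing by $\|\pa_t\rho_\tau\|_{L^2(0,T;H^1(\Omega)')}\|c_\tau\|_{L^2(0,T;H^1(\Omega))}$, whereas you expand it through the flux and apply Cauchy--Schwarz, which gives the same estimate.
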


\begin{proof}
We multiply the second equation in \eqref{5.rhoc} by $\pa_t c_\tau$, integrate over $\Omega$ and $(0,t)$, and integrate by parts:
\begin{align*}
  \tau&\int_0^t\int_\Omega(\pa_t c_\tau)^2 dxds
  = -\int_0^t\int_\Omega(\eta\Delta c_\tau - c_\tau + \rho_\tau)
  \pa_t c_\tau dxds \\
  &= -\frac12\int_\Omega\big(\eta|\na c_\tau|^2 + c_\tau\big)dx\Big|_0^t
  + \int_0^t\frac{d}{dt}\int_\Omega\rho_\tau c_\tau dxds
  - \int_0^t\langle\pa_t\rho_\tau,c_\tau\rangle ds \\
  &\le -\frac12\int_\Omega\big(\eta|\na c_\tau(t)|^2 + c_\tau(t)^2\big)dx
  + \int_\Omega\rho_\tau(t)c_\tau(t)dx \\
  &\phantom{xx}+ \|\pa_t\rho_\tau\|_{L^2(0,T;H^1(\Omega)')}
  \|c_\tau\|_{L^2(0,T;H^1(\Omega))} + C(\rho^0,c^0) \\
  &\le -\frac12\int_\Omega\big(\eta|\na c_\tau(t)|^2 
  + c_\tau(t)^2\big)dx + C,
\end{align*}
where the last step follows from Lemma \ref{lem.entD} and $\rho_\tau\le 1$ and $c_\tau\le 1$. This concludes the proof.
\end{proof}

We deduce from Lemma \ref{lem.entD} that $(\rho_\tau^{m/2})$ is bounded in $L^2(0,T;H^1(\Omega))$ and
\begin{align*}
  \|\pa_t \rho_\tau\|_{L^2(0,T;H^1(\Omega)')}
  \le \bigg\|\frac{2}{m}(1-\rho_\tau)\rho_\tau^{m/2}\na\rho_\tau^{m/2}
  - \chi\rho_\tau(1-\rho_\tau)\na c_\tau\bigg\|_{L^2(\Omega_T)}
  \le C
\end{align*}
for some $C>0$ independent of $\tau$. This allows us to apply the Aubin--Lions lemma in the version of \cite{Mou16} to conclude the existence of a subsequence that is not relabeled such that
\begin{align*}
  \rho_\tau\to \rho\quad\mbox{strongly in }L^2(\Omega_T)
  \ \mbox{as }\tau\to 0.
\end{align*}
Because of the uniform $L^\infty(\Omega_T)$ bound for $\rho_\tau$, the family $(\rho_\tau)$ converges strongly in any $L^p(\Omega_T)$ with $p<\infty$. Lemma \ref{lem.entD} shows that $(c_\tau)$ is bounded in $L^2(0,T;H^1(\Omega))$, hence, for a subsequence, $c_\tau\rightharpoonup c$ and $\na c_\tau\rightharpoonup\na c$ weakly in $L^2(\Omega_T)$. By Lemma \ref{lem.ceps}, $\tau\pa_t c_\tau\to 0$ strongly in $L^2(\Omega_T)$. Furthermore, the following weak convergences hold (up to subsequences):
\begin{align*}
  \na\rho_\tau^{m/2}\rightharpoonup\na\rho^{m/2}
  \quad\mbox{weakly in }L^2(\Omega_T), \quad
  \pa_t\rho_\tau\rightharpoonup\pa_t\rho
  \quad\mbox{weakly in }L^2(0,T;H^1(\Omega)').
\end{align*}
This implies that
\begin{align*}
  (1-\rho_\tau)\rho_\tau^{m-1}\na\rho_\tau
  = \frac{2}{m}(1-\rho_\tau)\rho_\tau^{m/2}\na\rho_\tau^{m/2}
  \rightharpoonup \frac{2}{m}(1-\rho)\rho^{m/2}\na\rho^{m/2}
  = (1-\rho)\rho^{m-1}\na\rho
\end{align*}
weakly in $L^1(\Omega_T)$, and since $(1-\rho_\tau)\rho_\tau^{m/2}\na\rho_\tau^{m/2}$ is uniformly bounded in $L^2(\Omega_T)$, this convergence also holds in $L^2(\Omega_T)$. Similarly, 
\begin{align*}
  \rho_\tau(1-\rho_\tau)\na c_\tau\rightharpoonup
  \rho(1-\rho)\na c\quad\mbox{weakly in }L^2(\Omega_T).
\end{align*} 
Thus, we can pass to the limit $\tau\to 0$ in the weak formulation of \eqref{5.rhoc} to infer that $(\rho,c)$ solves equations \eqref{1.tau}, together with the initial and boundary conditions \eqref{1.bic}. 


\subsection{Vanishing diffusion limit}

The aim is the limit $\eta\to 0$ in equations \eqref{1.rho}--\eqref{1.c}. Let $(\rho_\eta,c_\eta)$ be a weak solution to \eqref{1.rho}--\eqref{1.c}. 

\begin{lemma}
Let $1 < m\le 2$. Then there exists $C>0$, independent of $\eta$, such that
\begin{align}\label{6.est}
  \|\rho_\eta\|_{L^2(0,T;H^1(\Omega))}
  + \|c_\eta\|_{L^2(0,T;H^1(\Omega))}
  + \sqrt\eta\|\Delta c_\eta\|_{L^2(\Omega_T)} \le C(T),
\end{align}
\end{lemma}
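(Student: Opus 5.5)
Testing the $\rho$-equation with the entropy derivative and the $c$-equation with $-\Delta c$ makes the cross terms cancel; this is the entropy $F_\eta$ from \eqref{1.Feta}, and it yields estimates in which $\eta$ only helps. The starting point is inequality \eqref{3.claim}, valid for the weak solution of Theorem \ref{thm.ex}:
\begin{align*}
  \int_\Omega\big(\rho_\eta(\log\rho_\eta-1)+(1-\rho_\eta)(\log(1-\rho_\eta)-1)\big)dx\Big|_0^t
  + \int_0^t\int_\Omega\rho_\eta^{m-2}|\na\rho_\eta|^2dxds
  \le \chi\int_0^t\int_\Omega\na\rho_\eta\cdot\na c_\eta dxds.
\end{align*}
Next, test \eqref{1.c} with $-\Delta c_\eta$, which is admissible since $\pa_t c_\eta,\Delta c_\eta\in L^2(\Omega_T)$ by Theorem \ref{thm.ex}. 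After integration by parts this gives
\begin{align*}
  \frac{\tau}{2}\int_\Omega|\na c_\eta|^2dx\Big|_0^t
  + \int_0^t\int_\Omega\big(\eta|\Delta c_\eta|^2+|\na c_\eta|^2\big)dxds
  = \int_0^t\int_\Omega\na\rho_\eta\cdot\na c_\eta dxds.
\end{align*}

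Multiply the second identity by $\chi$ and subtract it from the first relation. The cross terms $\chi\na\rho_\eta\cdot\na c_\eta$ then cancel exactly, and we obtain
\begin{align*}
  \int_\Omega\Big(h_0(\rho_\eta)+\frac{\chi\tau}{2}|\na c_\eta|^2\Big)dx\Big|_0^t
  + \int_0^t\int_\Omega\big(\rho_\eta^{m-2}|\na\rho_\eta|^2
  + \chi\eta|\Delta c_\eta|^2 + \chi|\na c_\eta|^2\big)dxds \le 0,
\end{align*}
where $h_0(s)=s(\log s-1)+(1-s)(\log(1-s)-1)$. The function $h_0$ is bounded on $[0,1]$, since $-1\le h_0\le 0$. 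Moreover, $\rho_\eta^{m-2}\ge1$ because $m\le2$ and $\rho_\eta\le1$. The initial term $\tfrac{\chi\tau}{2}\|\na c^0\|_{L^2}^2$ is finite and independent of $\eta$. Together these give uniform bounds for $\na\rho_\eta$, $\na c_\eta$ and $\sqrt\eta\,\Delta c_\eta$ in $L^2(\Omega_T)$. The $L^2$ parts of the norms follow from $0\le\rho_\eta,c_\eta\le1$ and contribute at most $C(T)$.

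Alternatively, we can avoid relying on the exact cancellation. In that case we estimate $\chi\int\na\rho_\eta\cdot\na c_\eta\le\frac12\int|\na\rho_\eta|^2+\frac{\chi^2}{2}\int|\na c_\eta|^2$ and absorb the first term using $\rho_\eta^{m-2}\ge1$. We then need an $\eta$-independent bound on $\int|\na c_\eta|^2$, which comes from the $-\Delta c_\eta$ identity after Young's inequality.

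The main obstacle is rigor rather than computation: we need \eqref{3.claim} for the given weak solution $(\rho_\eta,c_\eta)$, with constants independent of $\eta$. Inequality \eqref{3.claim} was derived through the $\eps$-approximation, so for $\rho_\eta$ we would use the solution from Theorem \ref{thm.ex}, as is implicit in the statement. We also need to check that the $\eps$-dependent and $\delta$-dependent constants in that derivation never involve $\eta$. They do not: only the bound on $c_\eps$ in $L^2(0,T;H^1(\Omega))$ entered there. That bound is $\eta$-dependent via \eqref{2.cc}, but it is used only for fixed $\eta$, and the final inequality \eqref{3.claim} itself contains no such constant.
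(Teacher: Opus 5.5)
\textbf{There is a genuine gap: the exact cancellation your main argument rests on does not hold.} It comes from a sign error. In \eqref{3.claim} the cross term appears on the right as $+\chi\int_0^t\int_\Omega\na\rho_\eta\cdot\na c_\eta\,dxds$. In your $-\Delta c_\eta$ identity it also appears on the right, as $+\int_0^t\int_\Omega\na\rho_\eta\cdot\na c_\eta\,dxds$. Both are $-\int\rho_\eta\Delta c_\eta$, the aggregation term. Subtracting $\chi$ times the second relation from the first does remove it, but it also reverses the sign of every $c$-term. What you actually obtain is
\begin{align*}
  \int_\Omega h_0(\rho_\eta)dx\Big|_0^t
  + \int_0^t\int_\Omega\rho_\eta^{m-2}|\na\rho_\eta|^2 dxds
  \le \frac{\chi\tau}{2}\int_\Omega|\na c_\eta|^2dx\Big|_0^t
  + \chi\int_0^t\int_\Omega\big(\eta|\Delta c_\eta|^2+|\na c_\eta|^2\big)dxds .
\end{align*}
Its right-hand side consists of exactly the quantities you are trying to bound. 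Adding the relations instead doubles the cross term. So $\int_\Omega(h_0(\rho)+\frac{\chi\tau}{2}|\na c|^2)dx$ is not a Lyapunov functional.

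As a consistency check: if it were one, with dissipation at least $\int_\Omega|\na\rho|^2dx$, every equilibrium would be constant for every $\chi>0$. Yet Proposition \ref{prop.unique} needs $\chi\le 1$, and Section \ref{sec.num} shows a nonconstant equilibrium for $m=2$, $\chi=10$. A minor point: $h_0$ takes values in $[-1-\log 2,-1]$, not $[-1,0]$, though only its boundedness matters.

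\textbf{Your alternative route points the right way, but its decisive step is missing.} The $-\Delta c_\eta$ identity plus Young only gives $\int_0^t\|\na c_\eta\|_{L^2}^2ds\le\tau\|\na c^0\|_{L^2}^2+\int_0^t\|\na\rho_\eta\|_{L^2}^2ds$. Meanwhile \eqref{3.claim} with $\rho_\eta^{m-2}\ge 1$ gives $\int_0^t\|\na\rho_\eta\|_{L^2}^2ds\le C+\chi^2\int_0^t\|\na c_\eta\|_{L^2}^2ds$. These two time-integrated bounds close only if $\chi<1$; for $\chi\ge 1$ they are compatible with arbitrarily large gradients.

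For general $\chi$ you must exploit the pointwise-in-time term $\frac{\tau}{2}\|\na c_\eta(t)\|_{L^2}^2$ through Gronwall, which is what the paper does. Add the two relations with weight one, so that the functional is $F_\eta$ from \eqref{1.Feta} and the cross term becomes $(\chi+1)\int\na\rho_\eta\cdot\na c_\eta$. Young's inequality and $\rho_\eta^{m-2}\ge 1$ then give
\begin{align*}
  F_\eta(\rho_\eta,c_\eta)\Big|_0^t
  + \frac12\int_0^t\int_\Omega|\na\rho_\eta|^2dxds
  + \eta\int_0^t\int_\Omega|\Delta c_\eta|^2dxds
  \le K\int_0^t\int_\Omega|\na c_\eta|^2dxds,
  \qquad K=\tfrac12(\chi+1)^2-1 .
\end{align*}
Boundedness of $h_0$ gives $\|\na c_\eta(s)\|_{L^2}^2\le\frac{2}{\tau}\big(F_\eta(s)+C\big)$, and Gronwall then bounds $F_\eta(t)$ for $t\le T$. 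Returning to the display yields all three terms in \eqref{6.est}, with a constant depending on $T$, $\tau$, $\chi$ and the data but not on $\eta$.

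Your remarks on justifying \eqref{3.claim} for the constructed solution and on the admissibility of $-\Delta c_\eta$ as a test function are fine.
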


\begin{proof}
We compute, using Young's inequality,
\begin{align*}
  \frac{\tau}{2}\int_\Omega|\na c_\eta|^2 dx\Big|_0^t
  &= -\int_0^t\int_\Omega(\eta\Delta c_\eta - c_\eps + \rho_\eta)
  \Delta c_\eta dxds \\
  &= -\int_0^t\int_\Omega\big(\eta(\Delta c_\eta)^2 + |\na c_\eta|^2
  \big) dxds 
  + \int_0^t\int_\Omega\na\rho_\eps\cdot\na c_\eps dxds.
\end{align*}
The condition $1 < m\le 2$ implies that $\rho_\eps^{m-2}\ge 1$. Therefore, taking into account \eqref{3.claim} and the previous computation, recalling definition \eqref{1.Feta} of $F_\eta$ and summing the expressions,
\begin{align}\label{5.aux}
  F_\eta(\rho_\eta,c_\eta)\Big|_0^t  
  &\le -\int_0^t\int_\Omega\rho_\eta^{m-2}|\na\rho_\eta|^2 dxds
  - \int_0^t\int_\Omega\big(\eta(\Delta c_\eta)^2 + |\na c_\eta|^2
  \big) dxds \\
  &\phantom{xx} + (\chi+1)\int_0^t\int_\Omega\na\rho_\eta
  \cdot\na c_\eta dxds \nonumber \\
  &\le -\frac12\int_0^t\int_\Omega|\na\rho_\eta|^2 dxds
  - \eta\int_0^t\int_\Omega(\Delta c_\eta)^2 dxds \nonumber \\
  &\phantom{xx}+ \bigg(\frac12(\chi+1)^2-1\bigg)\int_0^t\int_\Omega 
  |\na c_\eta|^2 dxds \nonumber \\
  &\le -\frac12\int_0^t\int_\Omega|\na\rho_\eta|^2 dxds
  - \eta\int_0^t\int_\Omega(\Delta c_\eta)^2 dxds \nonumber \\
  &\phantom{xx}+ \bigg(\frac12(\chi+1)^2-1\bigg)\int_0^t (F_\eta(\rho_\eta,c_\eta)+2)ds. \nonumber 
\end{align}
By Gronwall's inequality,
\begin{align*}
  F_\eta(\rho_\eta(t),c_\eta(t)) \le C(T)F_\eta(\rho^0,c^0)
   \quad\mbox{for }t\le T.
\end{align*}
Going back to \eqref{5.aux}, we obtain \eqref{6.est}, finishing the proof.
\end{proof}

The previous lemma shows that
\begin{align*}
  \|\pa_t\rho_\eta\|_{L^2(0,T;H^1(\Omega)')}
  &\le \|(1-\rho_\eta)\rho_\eta^{m-1}\na\rho_\eta
  - \chi\rho_\eta(1-\rho_\eta)\na c_\eta\|_{L^2(\Omega_T)}\le C, \\
  \|\pa_t c_\eta\|_{L^2(\Omega_T)}
  &\le \|\eta\Delta c_\eta - c_\eta + \rho_\eta\|_{L^2(\Omega_T)}
  \le C,
\end{align*}
and we can apply the classical Aubin--Lions lemma to find that, for a subsequence,
\begin{align*}
  \rho_\eta\to \rho, \quad c_\eta\to c 
  \quad\mbox{strongly in }L^2(\Omega_T)\ \mbox{as }\eta\to 0,
\end{align*}
and the convergences also hold in $L^p(\Omega_T)$ with $p<\infty$. Moreover, again for subsequences,
\begin{align*}
  & \na\rho_\eta\rightharpoonup\na\rho, \quad
  \na c_\eta\rightharpoonup\na c, \quad
  \pa_t c_\eta\rightharpoonup\pa_t c 
  \quad\mbox{weakly in }L^2(\Omega_T), \\
  & \pa_t\rho_\eta\rightharpoonup\na\rho 
  \quad\mbox{weakly in }L^2(0,T;H^1(\Omega)'), \quad
  \eta\Delta c_\eta\to 0 \quad\mbox{strongly in }L^2(\Omega_T).
\end{align*}
These convergences are sufficient to pass to the limit $\eta\to 0$ in the weak formulation of \eqref{1.rho}--\eqref{1.bic} to infer that $(\rho,c)$ solves \eqref{1.eta}, together with the initial and boundary conditions \eqref{1.bic}. 


\section{Numerical simulations}\label{sec.num}

We present numerical experiments using a one-dimensional finite-volume scheme. 

\subsection{Numerical scheme}

Equations \eqref{1.rho}--\eqref{1.c} are solved in one space dimension by an upwind finite-volume scheme. The domain $\Omega=(0,1)$ is divided into the uniform intervals $I_i=[x_{x-1/2},x_{i+1/2}]$, where $x_{i\pm 1/2}=(i\pm 1/2)\Delta x$, $\Delta x>0$, and $i=1,\ldots,N$. The time steps are given by $t_k=k\Delta t$, where $\Delta t>0$ and $k\in\N$. The initial densities are defined by
\begin{align*}
  \rho_i^0 = \frac{1}{\Delta x}\int_{I_i}\rho^0 dx, \quad
  c_i^0 = \frac{1}{\Delta x}\int_{I_i}c^0 dx, \quad i=1,\ldots,N.
\end{align*}
The scheme reads as
\begin{align*}
  \frac{1}{\Delta t}(\rho_i^{k} - \rho_i^{k-1})
  + \frac{1}{\Delta x}(F_{i+1/2}^k - F_{i-1/2}^k) &= 0, \\
  \frac{1}{\Delta t}(c_i^{k} - c_i^{k-1})
  - \frac{1}{\Delta x^2}(c_{i+1}^{k-1} - 2c_i^{k-1} + c_{i-1}^{k-1})
  + c_i^{k-1} - \rho_i^{k-1} &= 0, 
\end{align*}
where $i=1,\ldots,N$, the fluxes are defined by
\begin{align*}
  F_{i+1/2}^k = -\frac{1}{\Delta x}\big(\psi_{i+1/2}^k
  (\rho_{i+1}^k-\rho_i^k) - \chi\phi_{i+1/2}^k(c_{i+1}^k-c_i^k)\big),
  \quad i=1,\ldots,N-1,
\end{align*}
$F_{1/2}^k=F_{N+1/2}^k=0$, and the interface values $\psi_{i+1/2}^k$ and $\phi_{i+1/2}^k$ are given by the upwind scheme
\begin{align*}
  \psi_{i+1/2}^k &= \begin{cases}
  (1-\rho_i^k)(\rho_{i+1}^k)^{m-1} 
  &\mbox{if }\rho_{i+1}^k-\rho_i^k\ge 0, \\
  (1-\rho_{i+1}^k)(\rho_{i}^k)^{m-1} 
  &\mbox{if }\rho_{i+1}^k-\rho_i^k < 0,
  \end{cases} \\
  \phi_{i+1/2}^k &= \begin{cases}
  \rho_i^k(1-\rho_{i+1}^k) &\mbox{if }c_{i+1}^k-c_i^k\ge 0, \\
  \rho_{i+1}^k(1-\rho_{i}^k) &\mbox{if }c_{i+1}^k-c_i^k < 0.
  \end{cases}
\end{align*}
We choose the numerical parameters $\Delta x=0.01$ and $\Delta t=10^{-6}$. The nonlinear discrete system is solved by Newton's method.


In Figure \ref{fig.chi}, we illustrate the long-time behavior of $(\rho,c)$ for $\chi=1$ (top row) and $\chi=10$ (bottom row). The parameters are $m=2$, $\tau=1$, and $M=1/2$. In accordance with the theoretical results of Proposition \ref{prop.unique}, the solution $(\rho,c)(t)$ converges to the unique constant steady state $(M,M)$ if $\chi=1$, while $(\rho,c)(t)$ converges to a nonconstant steady state if $\chi$ is sufficiently large (here $\chi=10$). 

\begin{figure}[ht]
\includegraphics[width=40mm]{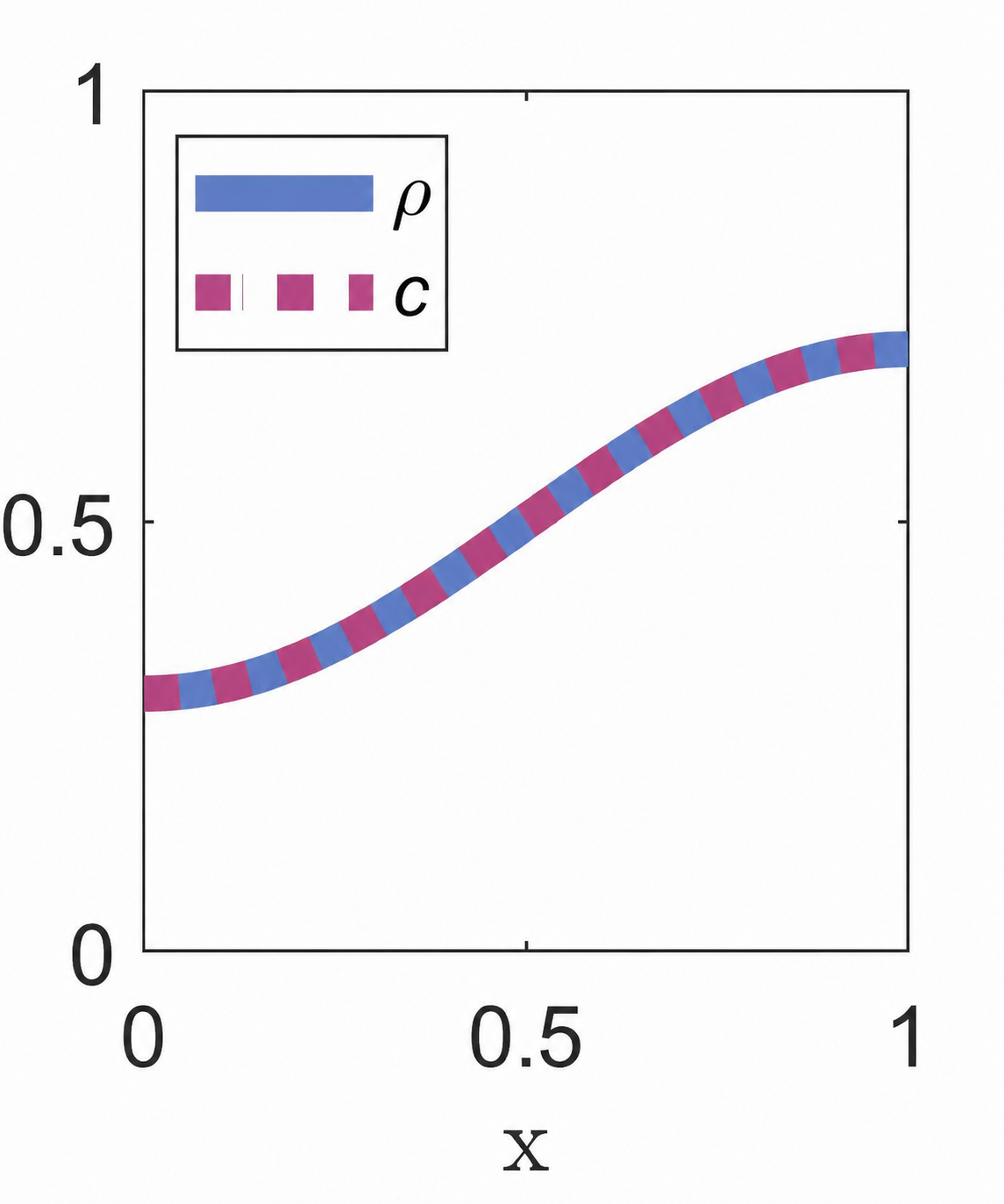}
\includegraphics[width=40mm]{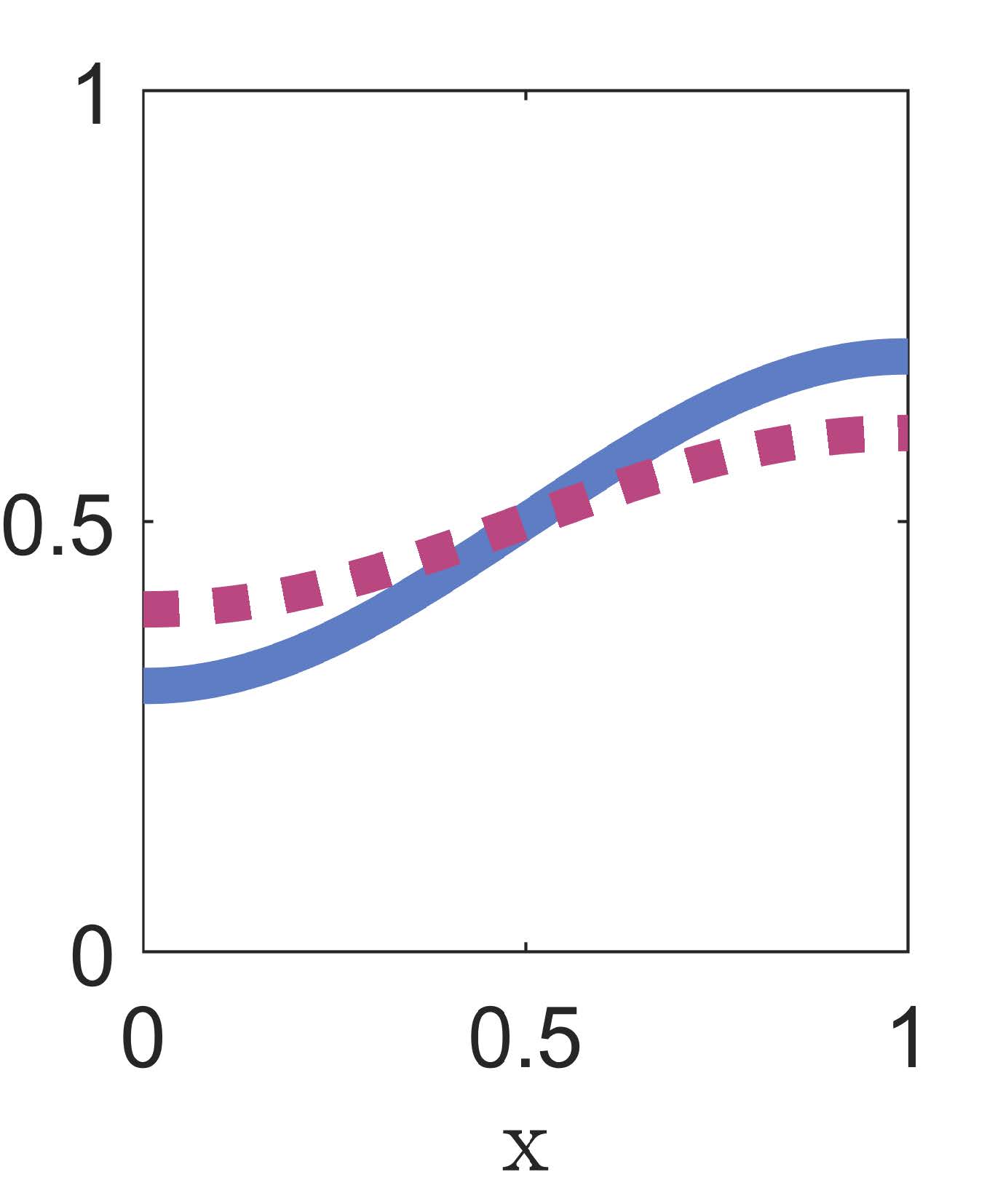}
\includegraphics[width=40mm]{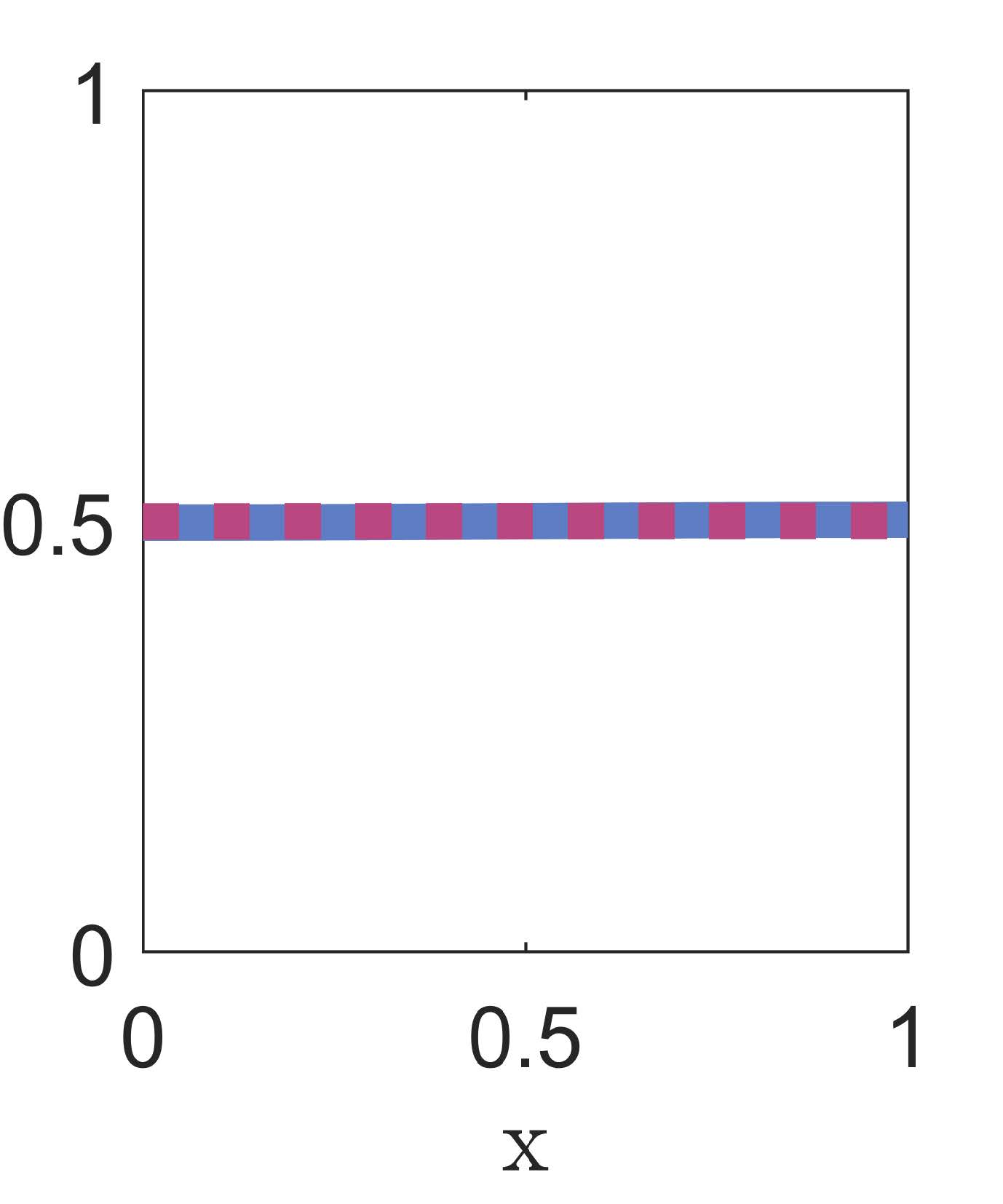}
\includegraphics[width=40mm]{fig_chi1_t0.png}
\includegraphics[width=40mm]{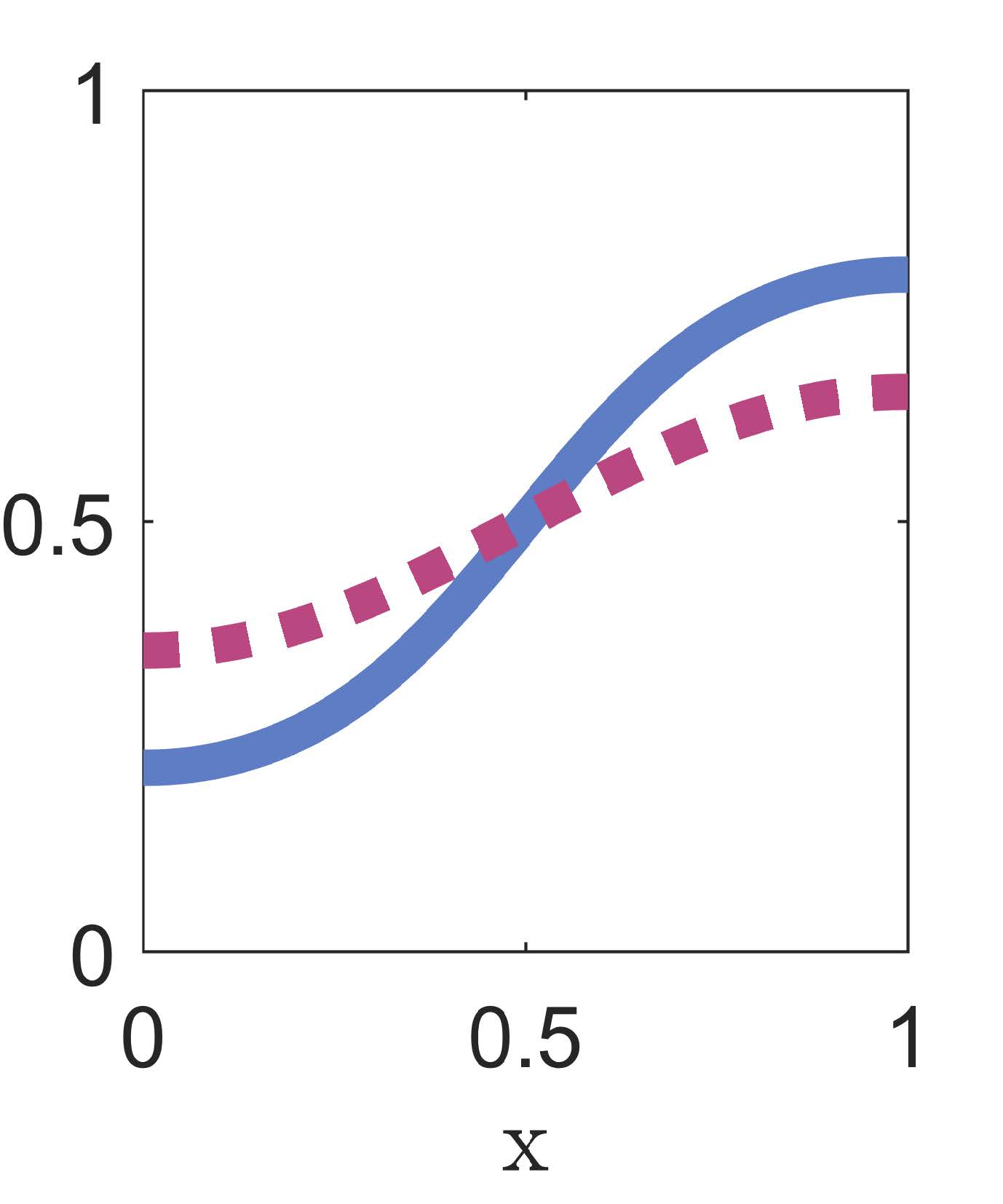}
\includegraphics[width=40mm]{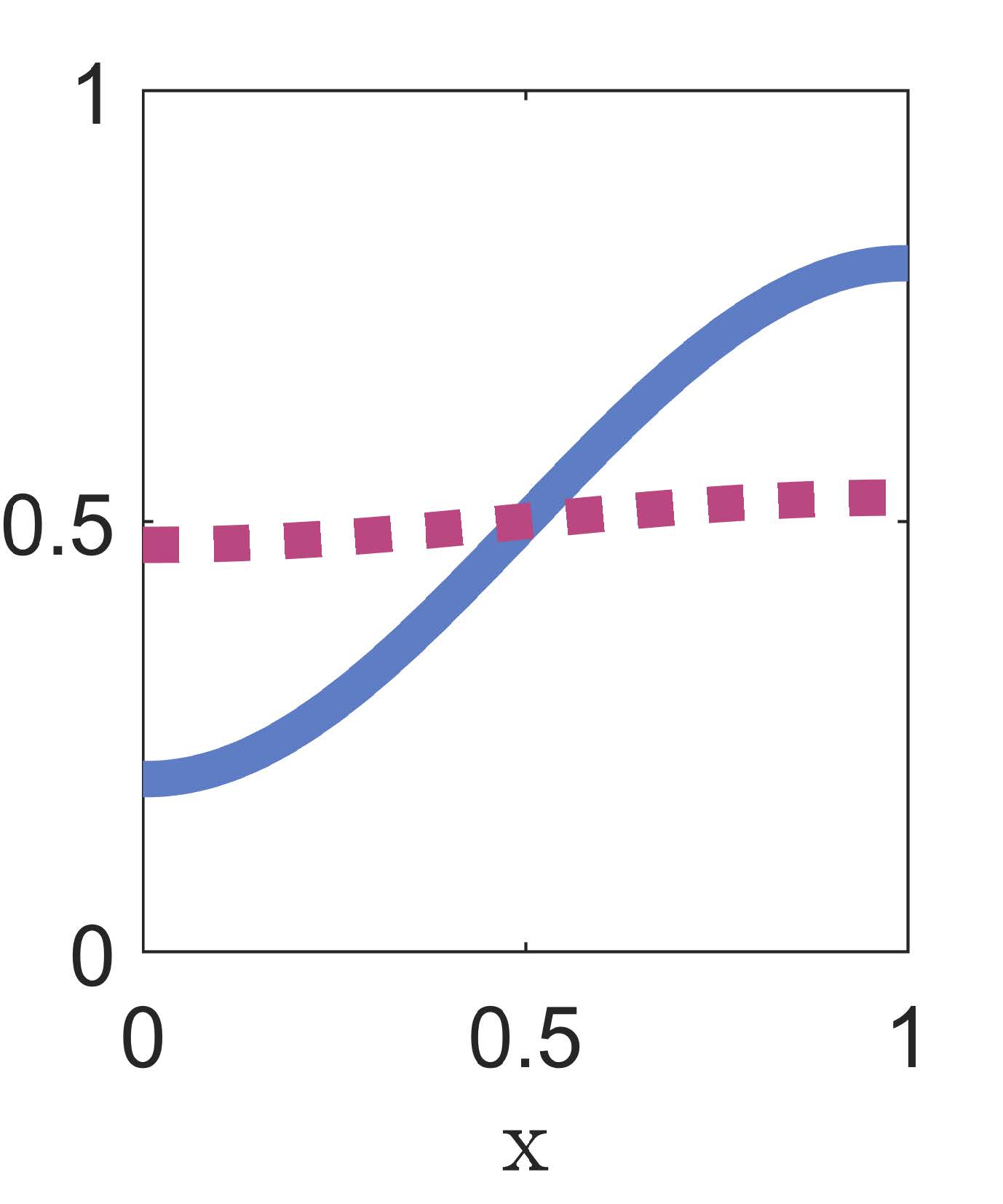}
\caption{Time evolution of $\rho$ (solid line) and $c$ (dotted line) with $\chi=1$ (top row) and $\chi=10$ (bottom row) at times $t=0$ (left), $t=5$ (middle), and $t=100$ (right).}
\label{fig.chi}
\end{figure}


Next, we present snapshots of the solution $(\rho,c)$ for different values of $\tau$, choosing $t=10$, $m=2$, and $M=1/2$. Figure \ref{fig.D} shows that the solution to the parabolic--parabolic system with $\tau=1$ clearly differs from the one of the parabolic--elliptic system with $\tau=0$. The difference between both solutions become smaller when $\tau\to 0$, and they coincide when $\tau=0$. 

\begin{figure}[ht]
\includegraphics[width=40mm]{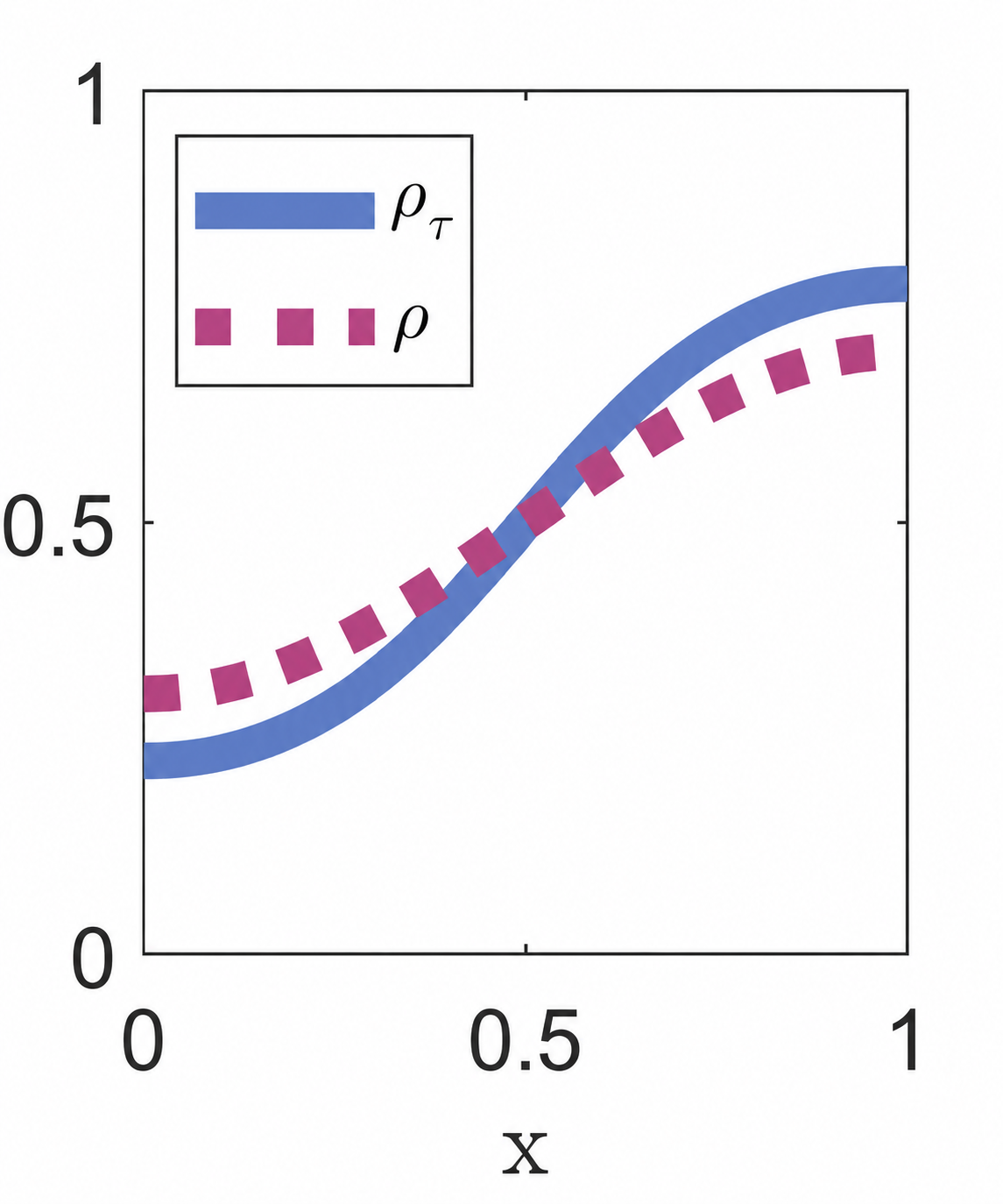}
\includegraphics[width=40mm]{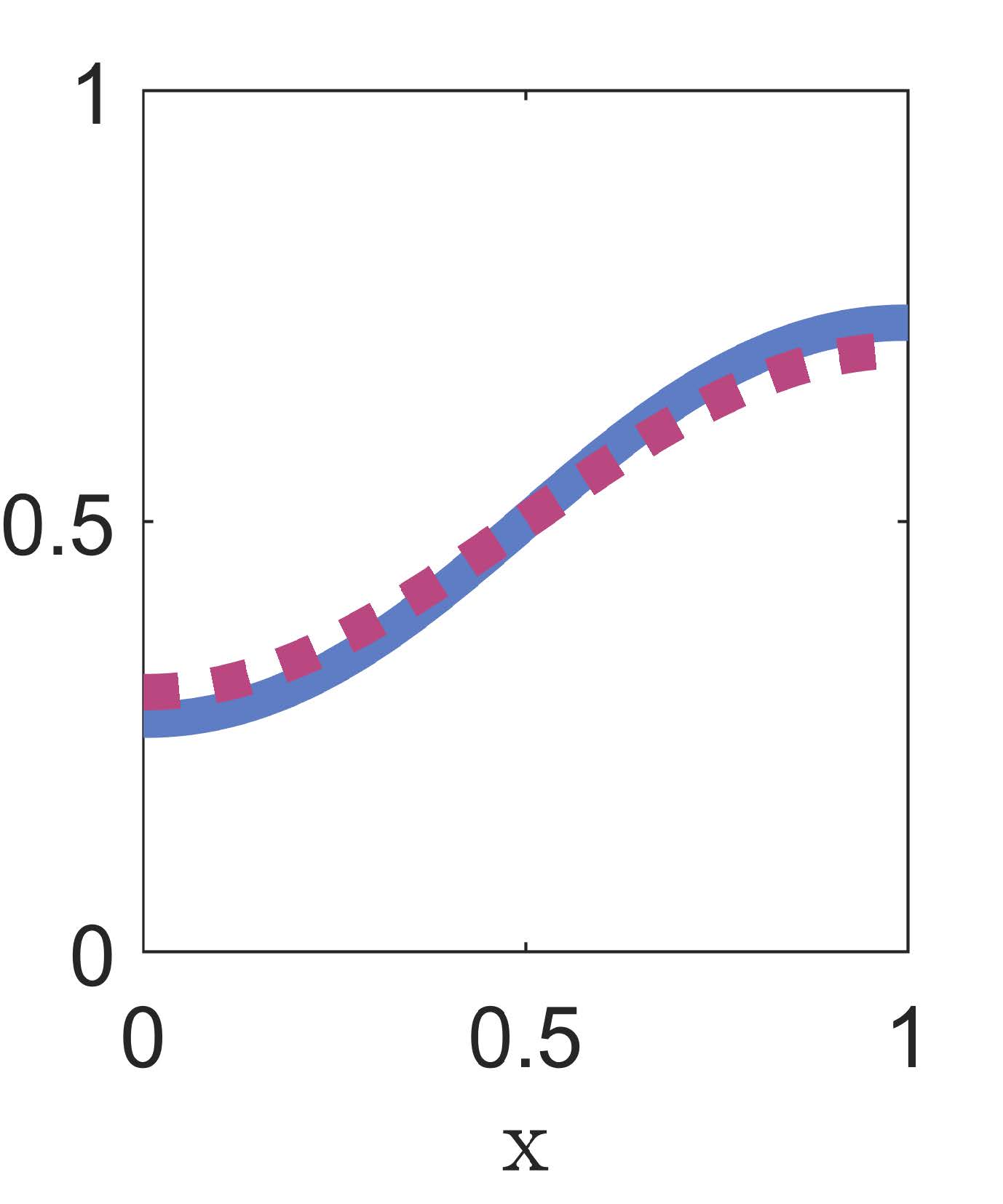}
\includegraphics[width=40mm]{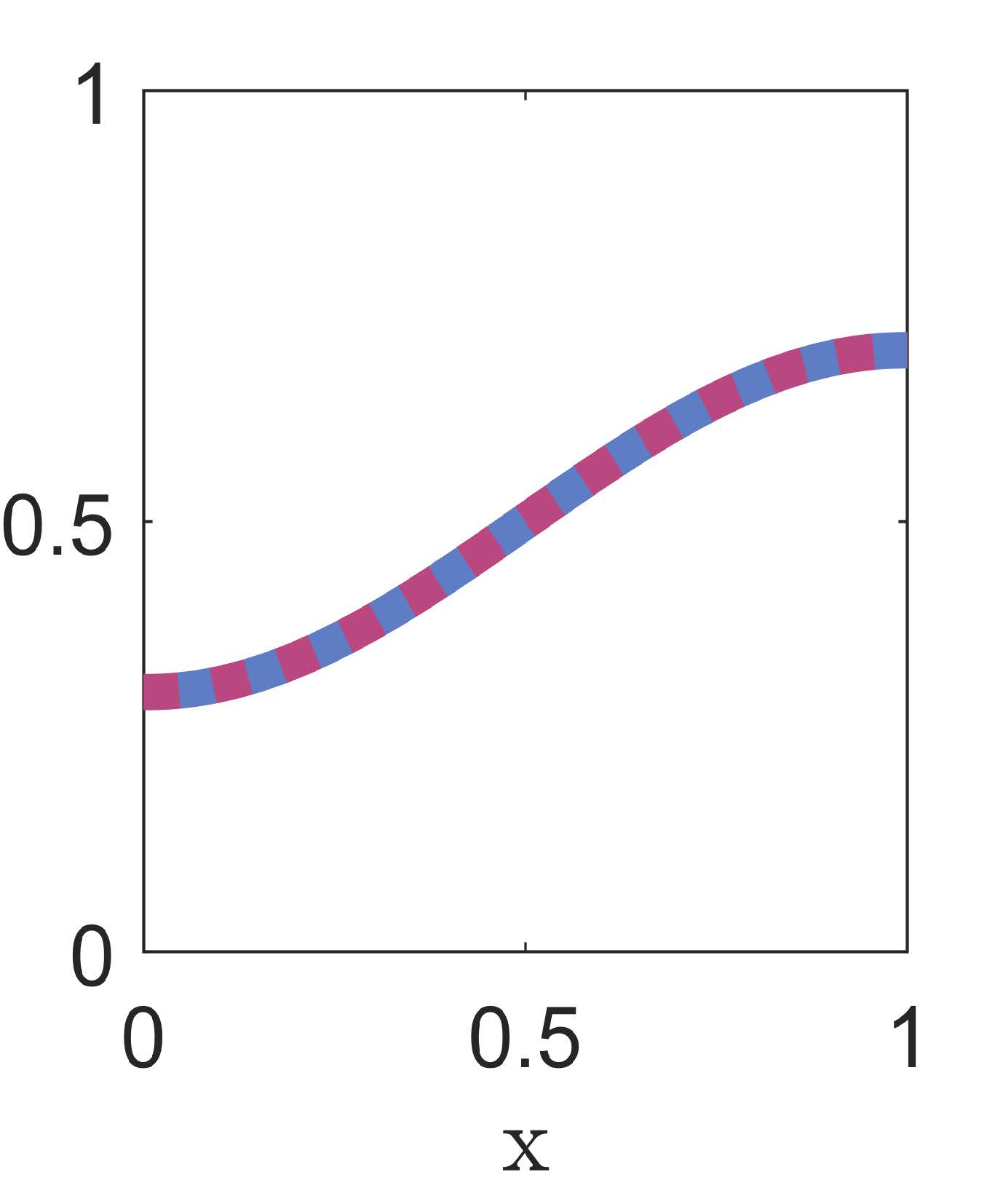}
\caption{Time snapshot of $\rho_\tau$ (solid line) and $\rho$ (dotted line) with $\tau=\eps=1$ (left), $\tau=0.1$ (middle), and $\tau=0$ (right).}
\label{fig.D}
\end{figure}


Finally, we study the behavior of the solution $(\rho,c)$ for different values of $\eta$. We choose the parameters $t=10$, $m=2$, $\tau=1$, and $M=1/2$. Figure \ref{fig.eta} illustrates the numerical convergence of the solution $\rho_\eta$ to the solution $\rho$ to the reduced system \eqref{1.eta} as $\eta\to 0$. The solutions $\rho_\eta$ and $\rho$ converge towards each other as $\eta$ decreases.

\begin{figure}[ht]
\includegraphics[width=40mm]{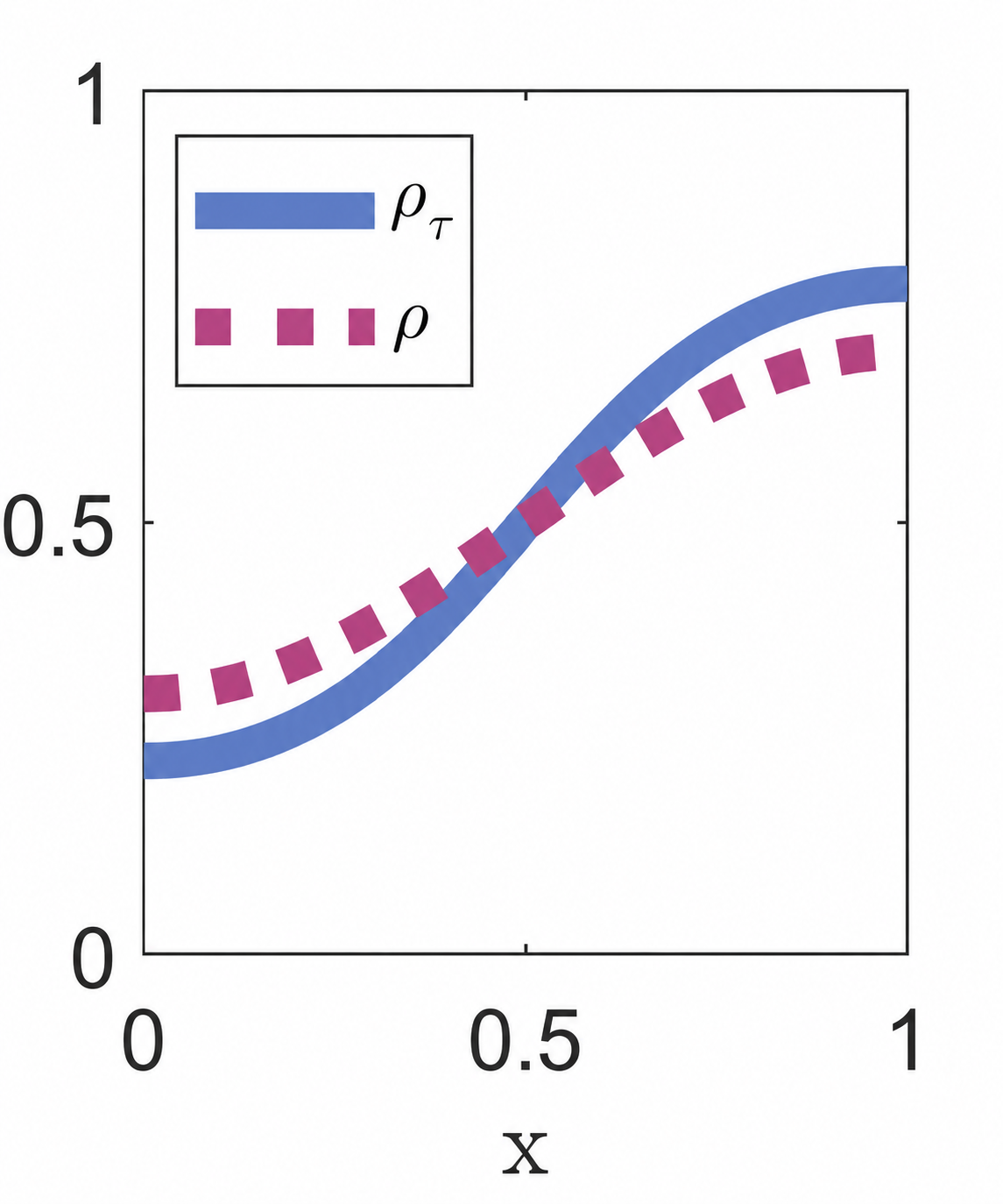}
\includegraphics[width=40mm]{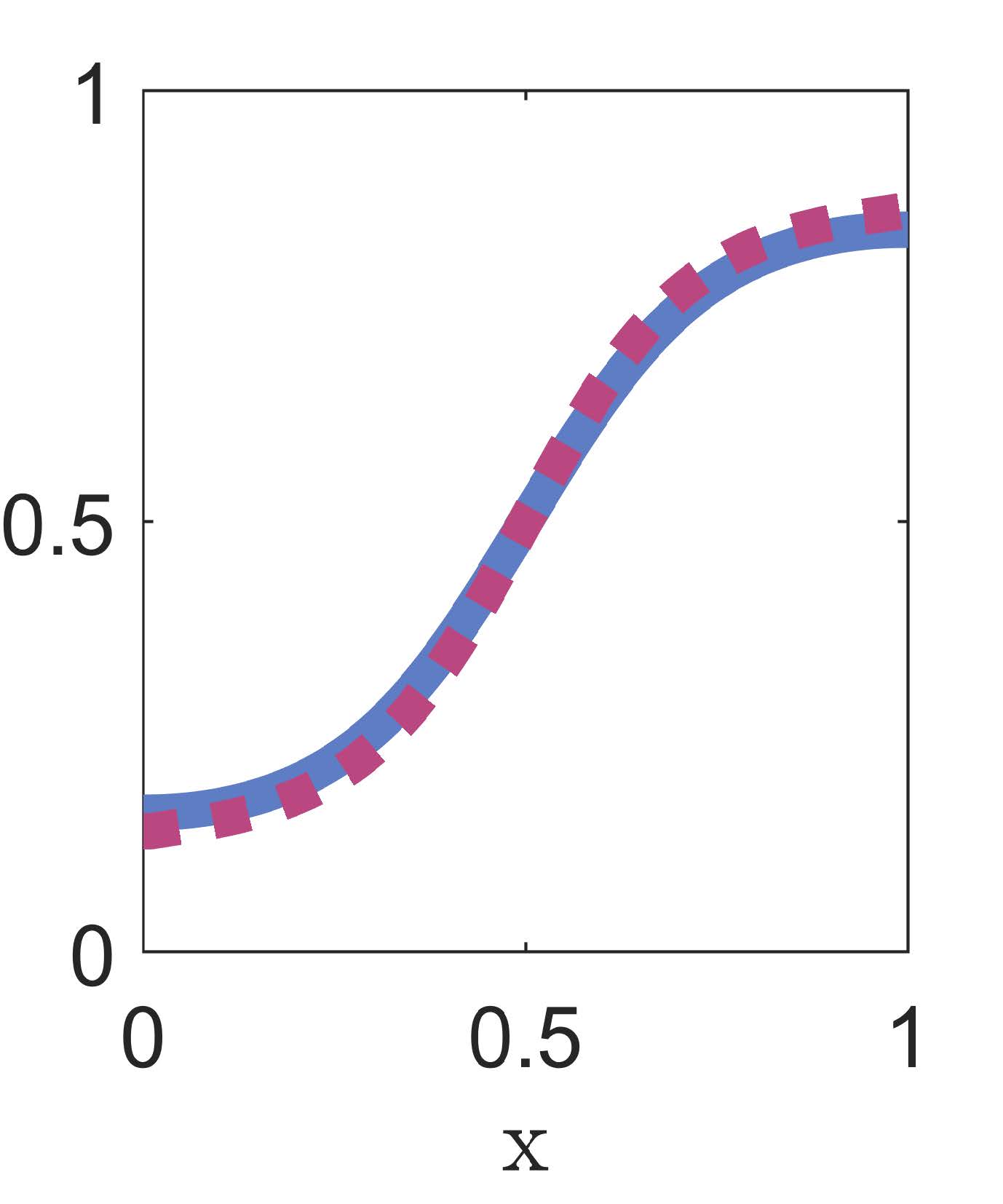}
\includegraphics[width=40mm]{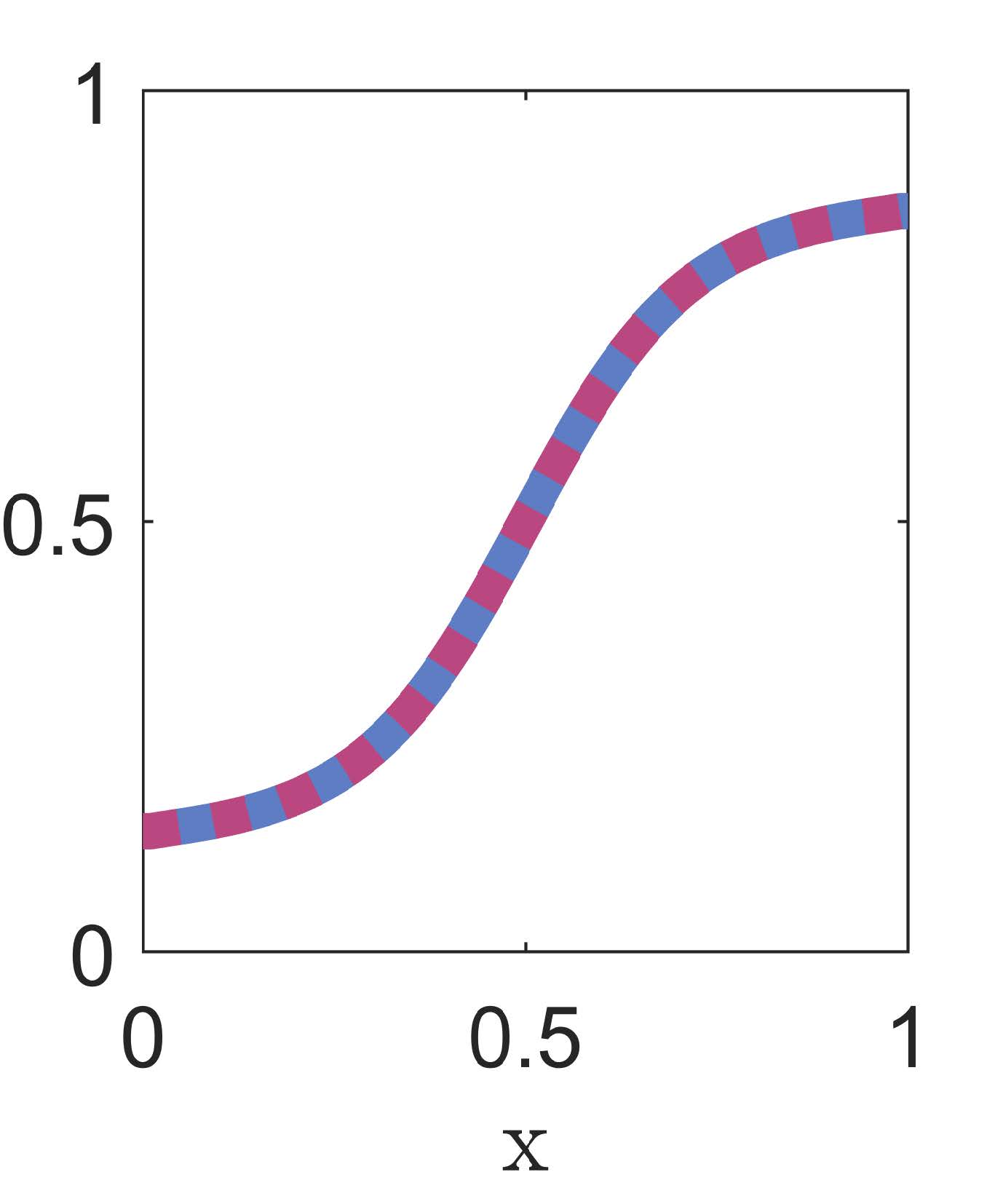}
\caption{Time snapshot of $\rho_\eta$ (solid line) and $\rho$ (dotted line) with $\eta=5$ (left), $\eta=0.5$ (middle), and $\eta=0$ (right).}
\label{fig.eta}
\end{figure}


\begin{appendix}
\section{Derivation of the chemotaxis model}\label{app}

We derive equation \eqref{1.rho} for the cell density using a multiphase approach inspired by \cite{LKBJS06}. Let $\rho_0(x,t)$ and $\rho_1(x,t)$ be the volume fractions occupied by water and cells, respectively, and let $v_0(x,t)$ and $v_1(x,t)$ be the corresponding velocities. The starting point are the mass and momentum balance equations
\begin{align}\label{a.eq}
  \pa_t\rho_i + \diver(\rho_i v_i) = 0, \quad 
  \na(\rho_i p_i) = F_i \quad\mbox{for }i=0,1,
\end{align}
where $p_i$ is the phase-specific pressure and $F_i$ represents the force acting between the phases. We assume that the total flux vanishes, $\rho_0v_0+\rho_1v_1=0$. Then the sum of the mass balance equations yields $\pa_t(\rho_0+\rho_1)=0$, and assuming that $\rho_0+\rho_1=1$ initially, we conclude that $(\rho_0+\rho_1)(t)=1$ for all times $t>0$. Thus, it is sufficient to derive an equation for the cell fraction $\rho_1$.  

The pressures are assumed to be defined by
\begin{align*}
  p_1 = p + q, \quad p_0 = p, \quad\mbox{where }
  q = \frac{1}{m}\rho_1^{m-1}.
\end{align*}
Here, $p$ is the overall pressure that is common to both the cells and water and $q$ is the intraphase pressure that is only present in the cell phase. The interaction force $F_i = F_{i,{\rm ip}} + F_{i,{\rm ch}} + F_{i,{\rm vd}}$ is supposed to be the sum of the interphase, chemotactic, and viscous drag forces. Following \cite{LKBJS06}, the interphase forces read as 
\begin{align*}
  F_{1,{\rm ip}} = p(\rho_0\na\rho_1 - \rho_1\na\rho_0) = p\na\rho_1,
  \quad
  F_{0,{\rm ip}} = p(\rho_1\na\rho_0 - \rho_0\na\rho_1) = -p\na\rho_1.
\end{align*}
The chemotactic force $F_{i,{\rm ch}}$ is the difference between the force $f_i$ (action) and the opposing force $f'_i$ (reaction). The action $f_1$ is given by the gradient of the chemical signal, weighted by the volume fraction $\rho_1$, giving $f_1 = \chi\rho_1\na c$ for some constant $\chi>0$. The reaction $f'_1$ is shared among all the phases present in the fluid, again weighted by the volume fraction, $f'_1 = \rho_1(f_0+f_1)$. We suppose that the force on the water phase vanishes, $f_0=0$. Consequently, we obtain
\begin{align*}
  F_{1,{\rm ch}} &= f_1 - \rho_1(f_0+f_1) = \chi\rho_1(1-\rho_1)\na c, \\
  F_{0,{\rm ch}} &= f_0 - \rho_0(f_0+f_1) = -\chi\rho_1(1-\rho_1)\na c.
\end{align*}
Finally, the drag forces are given by 
\begin{align*}
  F_{1,{\rm vd}} = -k\rho_0\rho_1(v_0-v_1) = -k\rho_1v_1, \quad
  F_{0,{\rm vd}} = -k\rho_0\rho_1(v_1-v_0) = -k\rho_0v_0,
\end{align*}
where $k>0$ is the drag coefficient and we used $\rho_0v_0=-\rho_1v_1$. This implies that
\begin{align*}
  F_1 = p\na\rho_1 - \chi\rho_1(1-\rho_1)\na c - k\rho_1 v_1, \quad
  F_0 = p\na\rho_0 + \chi\rho_1(1-\rho_1)\na c - k\rho_0 v_0.
\end{align*}
Observe that the total force vanishes, $F_0+F_1=0$, which means that there are no external forces.

We insert the expression for the forces into the momentum balance equation in \eqref{a.eq}:
\begin{align}\label{a.aux}
  0 &= -\na(\rho_1p_1) + F_1
  = -\na\bigg(\rho_1 p + \frac{1}{m}\rho_1^m\bigg)
  + p\na\rho_1 - \chi\rho_1(1-\rho_1)\na c - k\rho_1 v_1 \\
  &= -\rho_1\na p - \rho_1^{m-1}\na\rho_1 - \chi\rho_1(1-\rho_1)\na c 
  - k\rho_1 v_1, \nonumber \\
  0 &= -\na(\rho_0p_0) + F_0
  = -\na(\rho_0 p) + p\na\rho_0 + \chi\rho_1(1-\rho_1)\na c 
  - k\rho_0 v_0 \nonumber \\
  &= -\rho_0\na p + \chi\rho_1(1-\rho_1)\na c - k\rho_0 v_0. \nonumber 
\end{align}
The sum of both equations leads to an expression for $\na p$ (since $\rho_0+\rho_1=1$):
\begin{align*}
  \na p = -\rho_1^{m-1}\na\rho_1.
\end{align*}
We insert this expression into \eqref{a.aux}:
\begin{align*}
  0 &= -\rho_1\na p - \rho_1^{m-1}\na\rho_1 - \chi\rho_1(1-\rho_1)\na c 
  - k\rho_1 v_1 \\
  &= (1-\rho_1)\rho_1^{m-1}\na\rho_1 - \chi\rho_1(1-\rho_1)\na c 
  - k\rho_1 v_1.
\end{align*}
Finally, we solve this equation for $\rho_1v_1$ and insert the terms into \eqref{a.eq}:
\begin{align*}
  \pa_t\rho_1 &= k^{-1}\diver\big((1-\rho_1)\rho_1^{m-1}\na\rho_1
  - \chi\rho_1(1-\rho_1)\na c\big) \\
  &= k^{-1}\diver\big[\rho_1(1-\rho_1)
  \na\big(\rho_1^{m-1}/(m-1)-\chi c\big)\big],
\end{align*}
which equals \eqref{1.rho} with $k=1$. 

We remark that a similar derivation has been performed in \cite[Sec.~2]{ByOw04}. In that work, the chemotaxis effect is taken into account in the pressure $p_1$ in the cell phase instead in the chemotaxis force $F_{1,{\rm ch}}$ by setting $p_1 = p + q(\rho_1,c)$, which leads to
\begin{align*}
  \pa_t\rho_1 = \diver\big((1-\rho_1)\na(\rho_1 q(\rho_1,c))\big)
  = \diver\big(\rho_1(1-\rho_1)\na q(\rho_1,c)
  + (1-\rho_1)q(\rho_1,c)\na c\big).
\end{align*} 
This model is different from ours because of the additional gradient of the chemical concentration.
\end{appendix}


\end{document}